\newcommand{\cal}[1]{\mathcal{#1}}
\theoremstyle{plain}
\newtheorem{theo}{Theorem}
\newtheorem{lemma}{Lemma}[section]
\newtheorem{theorem}[lemma]{Theorem}  
\newtheorem{proposition}[lemma]{Proposition}
\newtheorem{corollary}[lemma]{Corollary}
\theoremstyle{definition}
\newtheorem{defi}{Definition}
\newtheorem{definition}[lemma]{Definition}
\newtheorem{remark}[lemma]{Remark}
\newtheorem{example}[lemma]{Example}
\let\egthree=\phi
\let\phi=\varphi
\let\varphi=\egthree
\begin{document}
\title[Hyperbolic of relatively hyperbolic graphs]
{Hyperbolic relatively hyperbolic graphs and disc graphs}
\author{Ursula Hamenst\"adt}
\thanks{Partially supported by the Hausdorff Center Bonn
and ERC Grant Nb. 10160104\\
AMS subject classification:05C12,20F65,57M07}
\date{May 18, 2014}


\begin{abstract}
We show that a relatively hyperbolic graph with
uniformly hyperbolic peripheral subgraphs is hyperbolic.
As an application, we show that the disc graph and
the electrified disc graph of 
a handlebody $H$ of genus $g\geq 2$
are hyperbolic, and we determine their Gromov boundaries.
\end{abstract}

\maketitle


\section{Introduction}

Consider a
connected metric graph ${\cal G}$ in which a family
${\cal H}=\{H_c\mid c\in {\cal C}\}$ of complete
connected subgraphs has been
specified. Here ${\cal C}$ is a 
countable, finite or empty index set. 
The graph ${\cal G}$ is \emph{hyperbolic
relative to the family ${\cal H}$} if 
the following properties are satisfied.

Define 
the \emph{${\cal H}$-electrification} ${\cal E\cal G}$ 
of ${\cal G}$ to be the graph which is obtained
from ${\cal G}$ by adding for every $c\in {\cal C}$ a
new vertex $v_c$ which is connected to each 
vertex $x\in H_c$ by an edge and which is not connected
to any other vertex. We require that 
the graph ${\cal E\cal G}$ is hyperbolic in the sense of Gromov
and that moreover
a property called \emph{bounded penetration} holds true
(see \cite{F98} for perhaps the first formulation of this
property). We refer
to \cite{S12} for a consolidation of the various
notions of relative hyperbolicity found in the literature.

If ${\cal G}$ is a hyperbolic metric 
graph and if 
${\cal H}$ is a family of disjoint connected uniformly quasi-convex
subgraphs of ${\cal G}$ then ${\cal G}$ is hyperbolic relative
to ${\cal H}$. This fact is probably folclore; implicitly it
was worked out in a slightly modified form in 
\cite{KR12}.  

Vice versa, Farb showed in \cite{F98} 
that if ${\cal G}$ is the
Cayley graph of a finitely generated group and if 
the graphs $H_c$ are $\delta$-hyperbolic 
for a number $\delta >0$ not depending on $c\in {\cal C}$ then
${\cal G}$ is hyperbolic. 
In \cite{BF06} it is noted that using a result of 
Bowditch \cite{Bw91}, the argument in \cite{F98} 
can be extended to arbitrary (possibly locally infinite) 
relatively hyperbolic metric graphs.

Our first goal is to give a different and self-contained proof of this
result which gives effective
estimates for the hyperbolicity constant as well as
explicit control on uniform quasi-geodesics. We show

\begin{theo}\label{hypback}
Let ${\cal G}$ be a metric graph which is hyperbolic
relative to a family ${\cal H}=\{H_c\mid c\in {\cal C}\}$
of complete connected subgraphs. If there is a number
$\delta>0$ such that each of the graphs $H_c$ is 
$\delta$-hyperbolic then ${\cal G}$ is hyperbolic. 
Moreover, the subgraphs $H_c$ $(c\in {\cal C})$ are uniformly
quasi-convex.
\end{theo}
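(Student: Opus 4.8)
The plan is to exhibit, for every ordered pair of points of $\mathcal{G}$, a preferred path, and to feed these paths into a ``guessing geodesics'' criterion for hyperbolicity of the following shape, which can be established directly and with explicit constants: if $X$ is a geodesic metric space, $L>0$, and for all $x,y\in X$ there is a path $\eta(x,y)$ from $x$ to $y$ with (a) $\operatorname{diam}_X\eta(x,y)\le L$ whenever $d_X(x,y)\le 1$, and (b) $\eta(x,y)\subset N_L\big(\eta(x,z)\cup\eta(z,y)\big)$ for all $x,y,z$, then $X$ is $\delta$-hyperbolic for some $\delta=\delta(L)$, and each $\eta(x,y)$ is a $(q,q)$-quasi-geodesic with $q=q(L)$. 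Thus the theorem reduces to constructing the family $\eta$ and verifying (a) and (b) with a single constant $L$, the desired effective estimates then being automatic.

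Fix the hyperbolicity constant $\delta_0$ of $\mathcal{EG}$ and the constant $B$ of the bounded penetration property. For $x,y\in\mathcal{G}$ pick an $\mathcal{EG}$-geodesic $c(x,y)$. Whenever $c(x,y)$ runs through a cone vertex $v_c$ it does so along a length-two subsegment $[a,v_c,b]$ with $a,b\in H_c$; replace each such subsegment by a fixed geodesic of $H_c$ joining $a$ to $b$ -- a path in $\mathcal{G}$ of $\mathcal{G}$-length $d_{H_c}(a,b)$ -- and let $\eta(x,y)$ be the resulting path in $\mathcal{G}$, with \emph{backbone} the part inherited from $c(x,y)\cap\mathcal{G}$ and \emph{$H_c$-pieces} the inserted geodesics. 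Condition (a) is immediate: $d_{\mathcal{G}}(x,y)\le 1$ forces $d_{\mathcal{EG}}(x,y)\le 1$, so $c(x,y)$ is a single edge of $\mathcal{G}$ and $\eta(x,y)$ has diameter $\le 1$; one then enlarges $L$ so that (a) also holds for $d_{\mathcal{G}}(x,y)\le L$. It is worth noting that a subpath of $\eta(x,y)$ between two of its points is, after truncating its two outermost $H_c$-pieces, again a filled-in $\mathcal{EG}$-geodesic of the same type, so once (b) is in place the quasi-geodesic conclusion of the criterion applies uniformly to all these paths.

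The heart of the argument is condition (b). Given $x,y,z$, the geodesics $c(x,y),c(x,z),c(z,y)$ form a $\delta_0$-thin triangle in $\mathcal{EG}$. Take $p\in\eta(x,y)$. If $p$ lies on an $H_c$-piece $[a,b]_{H_c}$ with $d_{H_c}(a,b)$ bounded by a threshold depending only on $\delta_0$ and $B$, then $p$ is $\mathcal{G}$-close to the backbone vertex $a$, and we are reduced to the case of a backbone point. If $d_{H_c}(a,b)$ exceeds that threshold -- so $c(x,y)$ penetrates $H_c$ deeply there -- then bounded penetration, applied through the thin triangle to the other two sides, forces $v_c$ to occur also on $c(x,z)$ or on $c(z,y)$, with entry and exit points at uniformly bounded $H_c$-distance from $a$ and $b$; the $\delta$-hyperbolicity of $H_c$ then makes the corresponding two geodesics of $H_c$ uniformly Hausdorff close in $H_c$, hence in $\mathcal{G}$, so $p$ lies $\mathcal{G}$-close to an $H_c$-piece of $\eta(x,z)\cup\eta(z,y)$. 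Finally, if $p$ is a backbone vertex, $\delta_0$-thinness gives a point $q$ on $c(x,z)\cup c(z,y)$ with $d_{\mathcal{EG}}(p,q)\le\delta_0$, and the remaining task is to \emph{upgrade} this electric proximity to genuine $\mathcal{G}$-proximity to $\eta(x,z)\cup\eta(z,y)$: either a short $\mathcal{EG}$-geodesic from $p$ to $q$ stays in $\mathcal{G}$, whence $d_{\mathcal{G}}(p,q)\le\delta_0$ and $q$ is itself $\mathcal{G}$-near a backbone of the comparison sides; or such a geodesic crosses cone vertices, in which case $p$ and $q$ are uniformly $\mathcal{G}$-close to a common $H_c$ that is deeply penetrated near $p$ and near $q$ by the relevant sides, and one again invokes $\delta$-hyperbolicity of $H_c$ to trade the cone detour for a path running along the $H_c$-pieces of those sides. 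Performing this upgrade, and phrasing bounded penetration so that it also compares an $\mathcal{EG}$-geodesic with the other two sides of a thin triangle, is the main obstacle; the rest is tracking constants.

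With (a) and (b) established, the criterion gives that $\mathcal{G}$ is hyperbolic, with a constant depending only on $\delta_0$ and $B$, and that every $\eta(x,y)$ is a uniform quasi-geodesic, hence uniformly Hausdorff close to any geodesic $[x,y]$ of $\mathcal{G}$. For the uniform quasi-convexity of the $H_c$ -- which needs no hyperbolicity of $H_c$ -- take $a,b\in H_c$. If $d_{\mathcal{G}}(a,b)\le\const$ then $[a,b]\subset N_{\const}(H_c)$ trivially; otherwise $d_{\mathcal{EG}}(a,b)=2$, the path $[a,v_c,b]$ is an $\mathcal{EG}$-geodesic, and filling it in gives a path $\sigma$ from $a$ to $b$ that is a geodesic of $H_c$, in particular $\sigma\subset H_c$. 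Since $\sigma$ is one of the admissible preferred paths, it is a uniform quasi-geodesic of $\mathcal{G}$ and therefore uniformly Hausdorff close to $[a,b]$; hence $[a,b]\subset N_R(H_c)$ with $R$ independent of $c$.
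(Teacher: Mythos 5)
Your overall strategy coincides with the paper's: take as preferred path from $x$ to $y$ the enlargement of an $\mathcal{E}\mathcal{G}$-geodesic (each passage through a cone vertex $v_c$ replaced by an $H_c$-geodesic) and feed these paths into a guessing-geodesics criterion. Two points, however. First, the criterion you state is too strong: from (a) and (b) alone one gets hyperbolicity and that the \emph{image} of $\eta(x,y)$ is Hausdorff-close to a geodesic, but not that $\eta(x,y)$ is a uniform quasi-geodesic -- nothing in (a), (b) prevents an $H_c$-piece from $a$ to $b$ from having $H_c$-length enormously larger than $d_{\mathcal{G}}(a,b)$. The paper uses Proposition 3.5 of \cite{H07}, whose hypotheses include a third condition, that every subpath of $\eta(x,y)$ is uniformly Hausdorff-close to the preferred path between its endpoints; your remark that subpaths are again filled-in electric geodesics is the right substitute, so this is a repairable imprecision, but the quasi-geodesic conclusion (which you also need for the quasi-convexity argument at the end) must be routed through that condition, not through (a) and (b).

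The genuine gap is the step you yourself label ``the main obstacle.'' Bounded penetration, as defined, compares two \emph{efficient} quasi-geodesics \emph{with the same endpoints}; the concatenation $c(z,y)\circ c(x,z)$ is in general neither efficient nor a uniform quasi-geodesic (take $z$ far from $[x,y]$), so ``bounded penetration applied through the thin triangle to the other two sides'' is not a licensed move. The paper's Proposition \ref{thin} makes it one by passing to a center $w$ of the electric triangle, replacing the sides by the concatenations $\beta_{i+1}^{-1}\circ\beta_i$ of geodesics through $w$ (these \emph{are} uniform quasi-geodesics sharing endpoints with the sides), performing surgery to make them efficient, and then transferring back via Lemma \ref{Lipschitz}. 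Your sketch of the ``upgrade'' for backbone points is also incorrect as stated: if a short $\mathcal{E}\mathcal{G}$-geodesic $\zeta$ from $p$ to a nearest point $q$ of a comparison path crosses a cone vertex $v_c$, it does not follow that $H_c$ is deeply penetrated near $p$ and $q$ by the relevant sides -- $H_c$ may be invisible to all of them. The correct mechanism, which is the heart of Lemma \ref{Lipschitz}, is that $\zeta$, chosen to realize the distance to the comparison path, can have \emph{no} wide cone point at all: a wide cone point of the efficient concatenation of $\zeta$ with the subpath from $x$ to $p$ would, by bounded penetration, force the comparison path through $v_c$, contradicting that $\zeta$ meets it only at $q$; hence the enlargement of $\zeta$ has uniformly bounded $\mathcal{G}$-length. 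Without this argument and the efficiency surgery in the triangle, neither (b) nor the subpath condition is established, so the proof is not complete as written.
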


The control we obtain allows to use the result 
inductively. Moreover, the Gromov boundary 
of ${\cal G}$ can easily be determined from the 
Gromov boundaries of ${\cal E\cal D\cal G}$ and the
Gromov boundaries of the quasi-convex subgraphs $H_c$.

We next discuss applications of Theorem \ref{hypback}.

Let $S$ be a closed 
surface of genus $g\geq 2$. 
For a number $k<g$ define the 
\emph{graph of non-separating $k$-multicurves} to be the
following metric graph ${\cal N\cal C}(k)$. Vertices 
are $k$-tuples of simple closed curves on $S$ which 
cut $S$ into a single connected component. 
Two such multicurves $c_1,c_2$ are connected by an edge
if $c_1\cup c_2$ is a non-separating multicurve with 
$k+1$ components.  In \cite{H13b} we used 
Theorem \ref{hypback} to show

\begin{theo}\label{nonsepmult}
For $k<g/2+1$ the graph ${\cal N\cal C}(k)$ is hyperbolic.
\end{theo}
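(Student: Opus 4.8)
The plan is to argue by induction on $k$, the inductive step being an application of Theorem~\ref{hypback}. For $k=1$ the graph ${\cal N\cal C}(1)$ is a variant of the curve graph of $S$ (vertices the non-separating simple closed curves, edges the disjoint pairs whose union remains non-separating), and its hyperbolicity is known; this is the base of the induction. For the inductive step one assumes that ${\cal N\cal C}(k-1)$ is hyperbolic and equips ${\cal N\cal C}(k)$ with a relatively hyperbolic structure whose peripheral subgraphs are uniformly hyperbolic, so that Theorem~\ref{hypback} yields hyperbolicity of ${\cal N\cal C}(k)$.

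Concretely, let ${\cal C}$ be the set of non-separating $(k-1)$-multicurves on $S$, and for $c\in{\cal C}$ let $H_c\subset{\cal N\cal C}(k)$ be the full subgraph spanned by the $k$-multicurves containing $c$. Cutting $S$ along $c$ produces a connected surface $S_c$ of genus $g-k+1$ with $2(k-1)$ boundary components, and $d\mapsto d\setminus c$ identifies $H_c$ with the graph of non-separating curves on $S_c$; as $c$ is non-separating there is a single mapping class group orbit of such $c$, so all $S_c$ have the same topological type and the $H_c$ are $\delta$-hyperbolic for a uniform $\delta$. Next I would check that the ${\cal H}$-electrification ${\cal E}{\cal N\cal C}(k)$ is quasi-isometric to ${\cal N\cal C}(k-1)$: the map sending $v_c$ to $c$ and a $k$-multicurve $d$ to a chosen $(k-1)$-sub-multicurve of $d$, with coarse inverse $c\mapsto v_c$, is a quasi-isometry, because any $(k-1)$-sub-multicurve of a non-separating $k$-multicurve is again non-separating, and two $(k-1)$-sub-multicurves of a common $k$-multicurve are at distance at most one in ${\cal N\cal C}(k-1)$. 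By the inductive hypothesis ${\cal E}{\cal N\cal C}(k)$ is then hyperbolic. (The hypothesis $k<g/2+1$ is used here only at the level of surface topology: it guarantees that ${\cal N\cal C}(k)$ and ${\cal N\cal C}(k-1)$ are connected and that every $(k-1)$-multicurve can be completed to a non-separating $k$-multicurve, so that every vertex $v_c$ is present; note also that it forces $S_c$ to have genus at least two, which rules out any low-complexity pathology in the $H_c$.)

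The main obstacle is to verify the bounded penetration property for the pair $({\cal N\cal C}(k),{\cal H})$. For this I would introduce, for each $c\in{\cal C}$, the subsurface projection $\pi_c\colon{\cal N\cal C}(k)\to H_c$ obtained from the inclusion $S_c\hookrightarrow S$ in the sense of Masur and Minsky (every $k$-multicurve cuts $S_c$, so $\pi_c$ is everywhere defined), and establish its standard properties in this setting: $\pi_c$ is the coarse identity on $H_c$; it moves a bounded amount along any edge of ${\cal E}{\cal N\cal C}(k)$ that does not pass through $v_c$; and it satisfies a Behrstock-type inequality with respect to the projections $\pi_{c'}$ for $c'\neq c$, using that the subsurfaces $S_c$ and $S_{c'}$ always overlap (being complements of distinct $(k-1)$-multicurves). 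Together with the bounded geodesic image theorem these facts imply that an electrified quasi-geodesic penetrates $H_c$ deeply precisely when $\pi_c$ of its two endpoints are far apart, and that when it does, its entrance and exit points in $H_c$ are coarsely the $\pi_c$-images of the respective endpoints; this is exactly bounded penetration. I expect transporting the Masur--Minsky projection calculus to the graphs ${\cal N\cal C}(k)$, and checking that all constants are uniform, to be the technical heart of the proof.

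Granting this, ${\cal N\cal C}(k)$ is hyperbolic relative to ${\cal H}$ with $\delta$-hyperbolic peripheral subgraphs, so Theorem~\ref{hypback} gives that ${\cal N\cal C}(k)$ is hyperbolic and that the $H_c$ are uniformly quasi-convex. The quasi-convexity, together with the explicit control on quasi-geodesics furnished by Theorem~\ref{hypback}, is what makes the induction self-sustaining: it is exactly what is needed to build the relatively hyperbolic structure at the next value of $k$ with constants that do not deteriorate.
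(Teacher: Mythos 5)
First, a caveat: this paper does not actually prove Theorem \ref{nonsepmult} --- it only quotes it from \cite{H13b} --- so your proposal can only be compared with the strategy indicated there (an inductive application of Theorem \ref{hypback}), whose general shape your outline does share. Nevertheless there is a genuine gap, and the quickest way to see that one must exist is that the hypothesis $k<g/2+1$ does no work in your argument: every place you invoke it (connectivity, completing a $(k-1)$-multicurve to a $k$-multicurve, genus of $S_c$ at least two) already holds for all $k\leq g-1$. Since the paper asserts that the bound $g/2+1$ is sharp, an argument that runs verbatim for, say, $g=4$ and $k=3$ must break somewhere.

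Here is where it breaks. The cut surface $S_c$ has genus $g-k+1$ but also $2(k-1)$ boundary circles, and $H_c$ is the graph whose edges are pairs $a,a'$ with $a\cup a'$ a non-separating $2$-multicurve of $S_c$; for a surface with boundary this graph is \emph{not} quasi-isometric to the curve graph of $S_c$, so its uniform hyperbolicity is not ``known'' off the shelf. Concretely, if $Y\subset S_c$ carries all the genus of $S_c$ and has connected planar complement containing $\partial S_c$, then every vertex of $H_c$ essentially intersects $Y$, so the subsurface projection $\pi_Y$ bounds the intrinsic distance in $H_c$ from below, whereas any two curves contained in $Y$ are at distance two in the curve graph of $S_c$ (via $\partial Y$). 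Consequently ``deep penetration of $H_c$'' is not equivalent to ``the projections of the endpoints to $S_c$ are far apart,'' which is the pivot of your bounded penetration argument; in addition, your Behrstock-type input is false as stated, since two distinct non-separating $(k-1)$-multicurves can be disjoint, in which case $S_c$ and $S_{c'}$ do not overlap. For $g=4$, $k=3$ these defects are fatal: splitting $S$ into disjoint genus-two subsurfaces $Y_1,Y_2$, every non-separating $3$-multicurve must cut both, so varying a curve in $Y_1$ and a $2$-multicurve in $Y_2$ independently produces a quasi-flat; your electrification and your $H_c$ are exactly as you describe them there, yet ${\cal N\cal C}(3)$ is not hyperbolic, so the bounded penetration property for your family fails. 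The real content of the theorem is the identification of the subsurfaces of $S$ that every non-separating $k$-multicurve must intersect, together with the fact that for $k<g/2+1$ no two disjoint ones can be varied independently; that analysis, which is where the hypothesis enters and which also governs the hyperbolicity of the peripheral graphs on the cut (bounded) surfaces, is absent from your proposal.
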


We also observed that the bound $k<g/2+1$ is sharp.
The same argument applies to the graph of 
non-separating multi-curves on a surface with punctures.

In this article we use Theorem \ref{hypback} 
to investigate the geometry
of graphs of discs in a handlebody. 
A handlebody of genus $g\geq 1$  
is a compact three-dimensional manifold $H$  which can
be realized as a closed regular neighborhood in $\mathbb{R}^3$
of an embedded bouquet of $g$ circles. Its boundary
$\partial H$ is an oriented surface of genus $g$.

An \emph{essential disc} in $H$ is a properly embedded
disc $(D,\partial D)\subset (H,\partial H)$ whose
boundary $\partial D$
is an essential simple closed curve in $\partial H$.

A subsurface $X$ of the compact surface $\partial H$ is 
called \emph{essential} if it is a complementary 
component of an embedded multicurve in $\partial H$.
Note that the complement of a non-separating simple closed
curve in $\partial H$ is essential in this sense, i.e. 
the inclusion $X\to \partial H$ need not induce in injection 
on fundamental groups.

Define a connected essential subsurface
$X$ of the boundary $\partial H$ of $H$ to be \emph{thick} if 
the following properties hold true.
\begin{enumerate}
\item Every disc intersects $X$.
\item $X$ is filled by boundaries of discs.
\end{enumerate}

The boundary surface $\partial H$ of $H$ is thick. 
An example of a proper thick subsurface of $\partial H$ is 
the complement in $\partial H$ of a suitably chosen simple closed curve
which is not discbounding.

\begin{defi}\label{defineelec}
Let $X\subset \partial H$ be a thick subsurface. 
The \emph{electrified disc graph} of $X$ 
is the graph 
${\cal E\cal D\cal G}(X)$ whose vertices
are isotopy classes of essential
discs in $H$ with boundary in $X$.
Two vertices $D_1,D_2$ are connected by an edge of length one
if there is an essential simple closed curve
in $X$ which can be realized disjointly 
from both $\partial D_1,\partial D_2$.
\end{defi}

If $X=\partial H$ then we call ${\cal E\cal D\cal G}(X)$ the 
\emph{electrified disc graph} of $H$.
Using Theorem \ref{hypback} we show

\begin{theo}\label{electrified}
The electrified disc graph ${\cal E\cal D\cal G}(X)$ of a thick subsurface
$X\subset \partial H$ of the boundary 
$\partial H$ of a handlebody $H$ of genus
$g\geq 2$ is hyperbolic.
\end{theo}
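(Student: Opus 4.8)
The plan is to exhibit $\mathcal{EDG}(X)$ as a graph that is hyperbolic relative to a family of uniformly hyperbolic peripheral subgraphs and then to invoke Theorem \ref{hypback}. I argue by induction on the complexity $\xi(X)$ of the thick subsurface $X$, proving along the way that $\mathcal{EDG}(X)$ is connected; since $\xi(X)\le\xi(\partial H)$ is bounded in terms of $g$, all the constants that arise will depend on $g$ only. In the low-complexity base cases $\mathcal{EDG}(X)$ either has uniformly bounded diameter or is directly seen to be quasi-isometric to the curve graph $\mathcal{C}(X)$, which is hyperbolic by a theorem of Masur and Minsky.

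For the inductive step, fix $X$ and, for each essential simple closed curve $c\subset X$ that is disjoint from the boundary of some disc with boundary in $X$ and for which $X\setminus c$ is again thick (such curves are coarsely dense in $\mathcal{C}(X)$), let $H_c$ be the subgraph of $\mathcal{EDG}(X)$ whose vertices are the discs $D$ with $\partial D\subset X\setminus c$ and whose edges are those of $\mathcal{EDG}(X\setminus c)$. Each such $H_c$ is either of bounded diameter or isomorphic to $\mathcal{EDG}(Y)$ for a thick subsurface $Y$ with $\xi(Y)<\xi(X)$, so by the inductive hypothesis it is connected and $\delta$-hyperbolic for a $\delta$ depending only on $\xi(X)$; thus $\mathcal{H}=\{H_c\}$ is a uniformly hyperbolic family of connected subgraphs. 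In the $\mathcal{H}$-electrification $\mathcal{E}\big(\mathcal{EDG}(X)\big)$ the assignment $D\mapsto\partial D$, $v_c\mapsto c$ extends to a coarsely Lipschitz map to $\mathcal{C}(X)$: an edge of $\mathcal{EDG}(X)$ joins discs whose boundaries lie at distance at most two in $\mathcal{C}(X)$, and an edge to an auxiliary vertex $v_c$ records disjointness from $c$.

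The technical core is a surgery argument. Given a disc $D$ with $\partial D\subset X$ and an essential curve $c\subset X$, surgering $\partial D$ along an outermost arc of $\partial D\cap c$ produces a disc whose boundary is disjoint from $\partial D$, is assembled from arcs of $\partial D$ and of $c$, and meets $c$ in strictly fewer points; iterating yields a coarsely well-defined projection $\mathcal{EDG}(X)\to H_c$, and applying it to a multicurve $c\cup c'$ shows that disjoint curves $c,c'$ give auxiliary vertices at uniformly bounded distance in $\mathcal{E}\big(\mathcal{EDG}(X)\big)$ while disc boundaries are coarsely dense in $\mathcal{C}(X)$. Two things must then be extracted from these surgery maps: that the map $\mathcal{E}\big(\mathcal{EDG}(X)\big)\to\mathcal{C}(X)$ above is a quasi-isometry, whence $\mathcal{E}\big(\mathcal{EDG}(X)\big)$ is hyperbolic; and the \emph{bounded penetration} property, namely that a geodesic of $\mathcal{C}(X)$ entering the one-neighbourhood of a curve $c$ is shadowed by a path of $\mathcal{EDG}(X)$ whose excursion into $H_c$ has coarsely well-defined entrance and exit discs. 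Both reduce to Behrstock-type inequalities and coarse Lipschitz bounds for the surgery projections, and this is the step I expect to be the main obstacle; the remaining verifications are bookkeeping. Granting them, $\mathcal{EDG}(X)$ is hyperbolic relative to the uniformly hyperbolic family $\mathcal{H}$, so Theorem \ref{hypback} applies and yields hyperbolicity of $\mathcal{EDG}(X)$ with a constant depending only on $g$, which closes the induction; moreover the quasi-isometry $\mathcal{E}\big(\mathcal{EDG}(X)\big)\simeq\mathcal{C}(X)$ together with the control on quasi-geodesics furnished by Theorem \ref{hypback} lets one read off the Gromov boundary of $\mathcal{EDG}(X)$ from those of $\mathcal{C}(X)$ and of the subgraphs $H_c$.
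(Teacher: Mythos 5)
The core of your plan --- electrify $\mathcal{EDG}(X)$ along the subgraphs $H_c$ of discs missing a curve $c$, identify the electrification with the curve graph $\mathcal{C}(X)$, and apply Theorem \ref{hypback} --- breaks down at the identification step, and in fact the electrification you build is not a simplification at all. Any two discs whose boundaries miss the same essential simple closed curve $c\subset X$ are, by the very definition of $\mathcal{EDG}(X)$, already joined by an edge; so each $H_c$ has diameter one as a subset of $\mathcal{EDG}(X)$, coning it off changes distances only by a bounded amount, and your $\mathcal{H}$-electrification is quasi-isometric to $\mathcal{EDG}(X)$ itself. Proving the electrification hyperbolic is therefore circular. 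Worse, the map $D\mapsto\partial D$ from $\mathcal{EDG}(X)$ to $\mathcal{C}(X)$ is not a quasi-isometry, precisely because of $I$-bundles: if $\gamma\subset X$ is an $I$-bundle generator with base $F$, the discs meeting $\gamma$ in exactly two points all have boundaries within uniformly bounded distance of $\gamma$ in $\mathcal{C}(X)$, yet their mutual distances in $\mathcal{EDG}(X)$ are coarsely the distances of the corresponding arcs in the curve graph of $F$ and hence unbounded (apply powers of a pseudo-Anosov of $F$ lifted to the bundle; compare Corollary \ref{distancesecond}, where the subsurface projections $\pi^\gamma$ contribute to $d_{\mathcal E}$ independently of $d_{\mathcal{C}\mathcal{G}}$). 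This is exactly the ``hole'' phenomenon of Masur--Schleimer, and it is why the paper does not relate $\mathcal{EDG}(X)$ to $\mathcal{C}(X)$ directly: the peripheral family must be taken to be the subgraphs $\mathcal{E}(\gamma)$ of discs meeting an $I$-bundle generator $\gamma$ exactly twice; the corresponding electrification is the graph $\mathcal{SDG}(X)$ with the extra $I$-bundle edges, and it is $\mathcal{SDG}(X)$, not $\mathcal{EDG}(X)$, that quasi-isometrically embeds into $\mathcal{C}(X)$ (Proposition \ref{distanceinter2}, quoted from \cite{H11}). Hyperbolicity of each $\mathcal{E}(\gamma)$ then comes from identifying it with an electrified arc graph of the base surface, and bounded penetration from subsurface projections into $X-\gamma$ together with \cite{RS09}.

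A second, independent problem is the surgery step you lean on: surgering $\partial D$ along outermost arcs of its intersection with an arbitrary essential curve $c$ produces simple closed curves, but there is no reason these bound discs. Disc surgery as in Lemma \ref{fellowtravel} requires the second object to be a disc as well; constructing a coarse projection of $\mathcal{EDG}(X)$ onto the discs disjoint from a given curve is a substantial matter in its own right (\cite{MS13}, \cite{H11}) and does not follow from such surgeries. Your inductive scheme of electrifying along disc graphs of subsurfaces is, however, essentially the mechanism the paper does use one level down, to descend from $\mathcal{EDG}(X)$ to the disc graph in Section 5 --- but there the peripheral subgraphs genuinely have unbounded diameter in the ambient graph because fewer edges are present, which is what makes that electrification nontrivial.
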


For the investigation of the \emph{handlebody group}, i.e. the group of 
isotopy classes of homeomorphisms of $H$, a more natural graph
to consider   
is the so-called disc graph 
which is defined as follows.

\begin{defi}\label{disccomplex}
The \emph{disc graph} ${\cal D\cal G}$
of $H$ is the graph whose vertices
are isotopy classes of essential discs in $H$. 
Two such discs are connected by an edge 
of length one if and only if
they can be realized disjointly.
\end{defi}

Since 
for any two disjoint  essential 
simple closed curves $c,d$ on $\partial H$
there is a simple closed curve on $\partial H$
which can be realized disjointly from $c,d$
(e.g. one of the curves $c,d$),
the electrified disc graph is obtained from the disc graph by adding 
some edges. This observation allows to apply Theorem \ref{hypback}
inductively to the graphs ${\cal E\cal D\cal G}(X)$ where
$X$ passes through the thick subsurfaces of $\partial H$ 
and deduce in a bottom-up 
inductive procedure hyperbolicity
of the disc graph from hyperbolicity of the 
electrified disc graph. In this way we obtain
a new, completely combinatorial and 
significantly simpler proof of
the following result which was first established by 
Masur and Schleimer \cite{MS13}.

\begin{theo}\label{discgraphs}
The disc graph ${\cal D\cal G}$ of 
a handlebody $H$ of genus $g\geq 2$ is hyperbolic.
\end{theo}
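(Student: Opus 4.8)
The plan is to prove Theorem \ref{discgraphs} by induction on the complexity of thick subsurfaces of $\partial H$, feeding Theorem \ref{electrified} into Theorem \ref{hypback}. For a thick subsurface $X\subset\partial H$ write $\xi(X)$ for its complexity (say $\xi(X)=3g(X)-3+b(X)$); this takes only finitely many values. I would prove by induction on $\xi(X)$ that ${\cal D\cal G}(X)$ is hyperbolic, with a hyperbolicity constant bounded in terms of $\xi(X)$ and $g$ alone. The base case consists of the thick subsurfaces of minimal complexity; for these one checks directly that ${\cal D\cal G}(X)$ coincides, up to the obvious quasi-isometry, with ${\cal E\cal D\cal G}(X)$ — no essential curve of $X$ is disjoint from two discs that are not already disjoint — so hyperbolicity is immediate from Theorem \ref{electrified}.

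For the inductive step, fix a thick subsurface $X$ and introduce a peripheral family. Let ${\cal C}$ be the set of isotopy classes of essential simple closed curves $c$ in $X$ that are disjoint from the boundary of at least one essential disc with boundary in $X$, and for $c\in{\cal C}$ let $H_c$ be the complete subgraph of ${\cal D\cal G}(X)$ spanned by all discs $D$ with $\partial D\subset X\setminus c$; set ${\cal H}=\{H_c\mid c\in{\cal C}\}$. Two points must be observed. First, ${\cal E\cal D\cal G}(X)$ is, up to quasi-isometry, the ${\cal H}$-electrification ${\cal E}({\cal D\cal G}(X))$ of ${\cal D\cal G}(X)$: an edge of ${\cal E\cal D\cal G}(X)$ is precisely a pair of discs lying in a common $H_c$, so ${\cal E\cal D\cal G}(X)$ is obtained from ${\cal D\cal G}(X)$ by turning each $H_c$ into a clique, and this changes distances from those of ${\cal E}({\cal D\cal G}(X))$ only by the bounded multiplicative and additive error coming from replacing a cone vertex by a clique; by Theorem \ref{electrified} the graph ${\cal E}({\cal D\cal G}(X))$ is hyperbolic. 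Second, each $H_c$ is hyperbolic with constant controlled by $\xi(X)$ and $g$: the discs in $H_c$ are exactly those with boundary in $X\setminus c$, and letting $Y_c\subseteq X\setminus c$ be the subsurface filled by their boundaries, either $Y_c$ splits into two disc-carrying pieces — whence $H_c$ is a nontrivial join and has diameter at most $2$ — or $Y_c$ is a connected thick subsurface with $\xi(Y_c)<\xi(X)$ and $H_c={\cal D\cal G}(Y_c)$, which is hyperbolic by the induction hypothesis.

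The heart of the argument, and the step I expect to be the main obstacle, is to verify the bounded penetration property of ${\cal D\cal G}(X)$ relative to ${\cal H}$: a uniform quasi-geodesic of ${\cal E}({\cal D\cal G}(X))$ must penetrate each peripheral set $H_c$ in a controlled way, the vertices at which it enters and leaves $H_c$ depending, up to bounded error, only on its endpoints and not on the quasi-geodesic itself. The tool I would use is a \emph{surgery projection} $\pi_c\colon{\cal D\cal G}(X)\to H_c$: given a disc $D$ whose boundary meets $c$, one surgers $\partial D$ along the innermost subarcs cut off by $c$ to produce a disc disjoint from $c$. Standard intersection-number estimates should show that $\pi_c$ is coarsely well defined, coarsely $1$-Lipschitz, and coarsely the identity on $H_c$; such a projection forces $H_c$ to be uniformly quasi-convex in ${\cal D\cal G}(X)$ and, together with the hyperbolicity of ${\cal E}({\cal D\cal G}(X))$ and the Morse lemma, yields bounded penetration, the entry and exit vertices being controlled by the $\pi_c$-images of the endpoints. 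This is also precisely where one must exploit the effective control on uniform quasi-geodesics promised by Theorem \ref{hypback}, since the relevant constants are propagated through the induction.

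Granting all of this, Theorem \ref{hypback} applies: ${\cal D\cal G}(X)$ is hyperbolic relative to ${\cal H}$, the peripheral subgraphs $H_c$ are uniformly hyperbolic, and hence ${\cal D\cal G}(X)$ is hyperbolic with an effective hyperbolicity constant; because the recursion runs over the finitely many values of $\xi$, this constant is ultimately bounded in terms of $g$ alone. Taking $X=\partial H$ proves the theorem. Besides the bounded penetration estimate, the one technical point that requires care is to ensure that the subsurfaces $Y_c$ produced in the inductive step are again thick — in particular that every essential disc of $H$ meets $Y_c$ — so that the inductive hypothesis literally applies to $H_c={\cal D\cal G}(Y_c)$; matching the precise definition of thickness with the choice of the index set ${\cal C}$ is what makes the induction go through.
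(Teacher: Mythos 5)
Your overall strategy --- feed Theorem \ref{electrified} into Theorem \ref{hypback} and induct over subsurfaces, with the base case being five-holed spheres and two-holed tori --- is the right one, and it is the one the paper follows. But the way you set up the relative hyperbolic structure has a fatal flaw. You take as peripheral family \emph{all} the subgraphs $H_c$, where $c$ ranges over every essential simple closed curve disjoint from some disc, and you pass from ${\cal D\cal G}(X)$ to ${\cal E\cal D\cal G}(X)$ in a single electrification step. This family is not bounded: if $c,c'$ are disjoint curves, then $H_c\cap H_{c'}$ consists of all discs disjoint from the multicurve $c\cup c'$, and in the intrinsic metric of $H_c$ (which you yourself identify with a disc graph of a subsurface) this intersection is again a disc graph of a smaller subsurface and typically has infinite diameter. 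By the Proposition in Section 2 (the consequence of Sisto's theorem that relative hyperbolicity forces the peripheral family to be bounded), ${\cal D\cal G}(X)$ cannot be hyperbolic relative to this ${\cal H}$. Concretely, the bounded penetration property --- which you correctly identify as the heart of the matter --- is simply false here: for two discs $D,E\in H_c\cap H_{c'}$ that are far apart in both intrinsic metrics, the paths $D,v_c,E$ and $D,v_{c'},E$ are both geodesics in the electrification, the first penetrates $H_c$ arbitrarily deeply, and the second misses $v_c$ entirely. No surgery projection can repair this; the obstruction is structural, not technical.

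This is exactly why the paper does not electrify all curves at once. It interpolates through the graphs ${\cal E\cal D\cal G}(k)$, where two discs are joined if they are disjoint from a multicurve with at least $k$ components, and descends from $k=1$ (the electrified disc graph) to $k=3g-3$ (the disc graph) one step at a time. At stage $k$ the peripheral sets correspond to multicurves with exactly $k-1$ components, and the intersection of two distinct peripherals lies in the set of discs disjoint from a multicurve with at least $k$ components --- which is precisely what gets coned off at that stage, so the intersections have uniformly bounded diameter and bounded penetration becomes provable via the subsurface projection arguments of Lemma \ref{projectionlarge} and Lemma \ref{boundedinelec}. Two further points you flag but do not resolve are also handled by this reorganization: the subsurfaces arising in the induction need not be thick (the paper introduces the weaker notion of a \emph{visible} subsurface for exactly this reason), and the peripheral subgraphs at each stage are \emph{electrified} disc graphs of subsurfaces (hyperbolic by Corollary \ref{elect}), not disc graphs, so the induction never needs to invoke hyperbolicity of a disc graph of lower complexity and the $I$-bundle phenomenon stays confined to the analysis already done for ${\cal E\cal D\cal G}$.
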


We also determine the Gromov boundary of the disc graph.
Namely, recall from \cite{K99,H06} 
that the Gromov boundary of the curve graph 
of an essential subsurface $X$ of $\partial H$ can
be identified with the space of minimal geodesic laminations
$\lambda$ in $X$ which \emph{fill} $X$, i.e. are such that every 
essential simple closed curve in $X$ has non-trivial intersection 
with $\lambda$. The Gromov topology on this space of 
geodesic laminations is the \emph{coarse Hausdorff
topology} which can be defined as follows. 
A sequence $\lambda_i$ converges to $\lambda$ if and only if
every limit in the usual Hausdorff topology 
of a subsequence of $\lambda_i$ contains $\lambda$ as a 
sublamination. Notice that 
the coarse Hausdorff topology is defined on the entire space
${\cal L}(\partial H)$ 
of geodesic laminations on $\partial H$, however it is not 
Hausdorff.

We observe that for every thick subsurface $X$ of 
$\partial H$ the Gromov boundary 
$\partial {\cal E\cal D\cal G}(X)$ of 
the electrified disc graph ${\cal E\cal D\cal G}(X)$ 
can be identified with a subspace of the space of topological 
laminations on $X$, equipped with the coarse Hausdorff topology.
Moreover we show

\begin{theo}\label{gromovbd} The Gromov boundary 
$\partial {\cal D\cal G}$ of the disc graph equals the
subspace 
\[\partial {\cal D\cal G}=\cup_X \partial {\cal E\cal D\cal G}(X)
\subset {\cal L}(\partial H)\]
equipped with the coarse Hausdorff topology. The union is over
all thick subsurfaces $X$ of $\partial H$.
\end{theo}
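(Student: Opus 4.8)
The plan is to identify the Gromov boundary of ${\cal D\cal G}$ by relating it, via Theorem \ref{hypback} and its control on quasi-geodesics, to the Gromov boundaries of the electrified disc graphs ${\cal E\cal D\cal G}(X)$ that already appeared in the inductive proof of Theorem \ref{discgraphs}. Recall from that proof that ${\cal D\cal G}$ is built from the top electrified disc graph ${\cal E\cal D\cal G}(\partial H)$ by a finite tower of relatively hyperbolic refinements: at each stage one declares the vertices whose boundary lies in a fixed thick subsurface $X$ to span a peripheral subgraph, and these peripheral subgraphs are, by the inductive hypothesis, uniformly hyperbolic copies of ${\cal E\cal D\cal G}(X)$ for proper thick $X$. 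By Theorem \ref{hypback} the peripheral subgraphs $H_c$ embed quasi-convexly with a uniform constant; consequently their limit sets in $\partial {\cal D\cal G}$ are precisely the images of the $\partial H_c = \partial{\cal E\cal D\cal G}(X)$, and a point of $\partial {\cal D\cal G}$ is represented either by a geodesic ray that eventually stays (up to bounded error) in one peripheral subgraph, in which case its endpoint lies in some $\partial {\cal E\cal D\cal G}(X)$ with $X$ proper, or by a ray that leaves every peripheral subgraph, in which case its image in the electrification ${\cal E}{\cal D\cal G}$ is an infinite geodesic and its endpoint lies in $\partial {\cal E\cal D\cal G}(\partial H)$. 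This already gives the set-theoretic equality $\partial {\cal D\cal G}=\cup_X \partial {\cal E\cal D\cal G}(X)$ as subsets of ${\cal L}(\partial H)$, once one knows — which is the content of the ``we observe'' remark preceding the statement — that each $\partial {\cal E\cal D\cal G}(X)$ is already realized as a set of geodesic laminations on $X$ carried by $\partial H$.

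First I would make precise the realization of a boundary point as a lamination: given a quasi-geodesic ray $(D_i)$ in ${\cal D\cal G}$, the boundaries $\partial D_i$ are simple closed curves on $\partial H$; after passing to a subsequence the $\partial D_i$ converge in the Hausdorff topology on ${\cal L}(\partial H)$ to a geodesic lamination, and using the bounded-penetration property together with the nesting behaviour of subsurface projections one shows that the minimal components of this limit, taken in the coarse Hausdorff topology, depend only on the endpoint of the ray in $\partial {\cal D\cal G}$ and fill the smallest thick subsurface $X$ through which the ray is eventually supported. This is the map $\partial {\cal D\cal G}\to {\cal L}(\partial H)$; injectivity of this map, and the fact that its image is exactly $\cup_X\partial {\cal E\cal D\cal G}(X)$, follow from the corresponding statements for the electrified disc graphs (from \cite{K99,H06} at the top level, and inductively below) together with the quasi-convexity of the peripheral subgraphs supplied by Theorem \ref{hypback}.

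Next I would verify that this bijection is a homeomorphism for the coarse Hausdorff topology. One inclusion is soft: if $D_i^{(n)}\to\xi_n$ in $\partial{\cal D\cal G}$ and $\xi_n\to\xi$, a diagonal argument on the defining rays together with continuity of the Gromov product shows that any Hausdorff limit of the representing curves contains the lamination representing $\xi$, i.e. convergence in $\partial {\cal D\cal G}$ implies coarse Hausdorff convergence. For the converse one uses the hyperbolicity and the explicit control on uniform quasi-geodesics from Theorem \ref{hypback}: if the laminations $\lambda_n$ representing $\xi_n$ coarse-Hausdorff converge to $\lambda$ representing $\xi$, then discs whose boundaries approximate $\lambda_n$ and $\lambda$ cannot be uniformly separated in ${\cal D\cal G}$ — their Gromov product with respect to a fixed basepoint is forced to infinity by the nesting of the relevant subsurface projections — so $\xi_n\to\xi$ in $\partial{\cal D\cal G}$. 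Passing between successive levels of the inductive tower requires knowing that the identification of boundaries is compatible with the inclusions ${\cal D\cal G}\hookrightarrow{\cal E}{\cal D\cal G}$ and $H_c\hookrightarrow{\cal D\cal G}$, which again is exactly the uniform quasi-convexity in Theorem \ref{hypback}.

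The main obstacle I anticipate is the mismatch of topologies at the ``seams'' of the decomposition: a single point $\xi\in\partial{\cal D\cal G}$ can be approached both by points coming from deep peripheral subgraphs ${\cal E\cal D\cal G}(X)$ with $X$ small and by points whose supporting subsurface is larger, and one must check that the coarse Hausdorff topology glues all of these $\partial{\cal E\cal D\cal G}(X)$ together in precisely the way the Gromov topology on $\partial{\cal D\cal G}$ does. Controlling this requires a uniform-in-$c$ statement about how a geodesic of ${\cal D\cal G}$ enters and exits the peripheral subgraphs — i.e. that long penetration into $H_c$ forces the endpoint lamination to be carried by the corresponding subsurface, with quantitative bounds independent of $c$ — and it is here that the effective estimates promised by Theorem \ref{hypback}, rather than mere qualitative hyperbolicity, are essential; everything else is a bookkeeping of subsurface projections.
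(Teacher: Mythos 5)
Your plan follows essentially the same route as the paper: the paper proves this statement by declaring it ``completely analogous to the proof of Proposition \ref{edgb}'', i.e.\ by applying the general boundary computation of Proposition \ref{boundary} (the boundary of a relatively hyperbolic graph with uniformly hyperbolic peripheral subgraphs is the boundary of the electrification together with the peripheral boundaries) inductively through the tower ${\cal E\cal D\cal G}(k)$ used to prove hyperbolicity of ${\cal D\cal G}$, and then matching the resulting neighborhood bases against the coarse Hausdorff topology via the Klarreich--Hamenst\"adt description of curve graph boundaries, exactly as you propose. The only ingredient of that analogous proof your plan leaves implicit is the verification that the union $\cup_X\partial{\cal E\cal D\cal G}(X)$ is Hausdorff in the coarse Hausdorff topology (any two distinct boundary laminations, possibly supported in different thick subsurfaces, must intersect transversely), which is the opening step of the proof of Proposition \ref{edgb} and is needed before the displayed set can be treated as a topological subspace of ${\cal L}(\partial H)$.
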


There is no analog of this result for handlebodies with 
\emph{spots}, i.e. with marked points on the boundary.
Indeed, we showed in \cite{H13} that the disc graph of 
a handlebody with one or two spots on the boundary is not 
hyperbolic. The electrified disc graph is not hyperbolic 
for handlebodies with one spot on the boundary, and the same
holds true for sphere graphs.

The organization of this paper is as follows. 
In Section 2 we show Theorem \ref{hypback}.
Section 3 discusses some relative version of 
results from \cite{H11}. In Section 4, we show the
second part of Theorem \ref{discgraphs}, and 
the proof of the first part as well 
as of Theorem \ref{gromovbd} is contained in Section 5.

\section{Hyperbolic thinnings of hyperbolic graphs}

In this section we show Theorem \ref{hypback}
from the introduction.
Consider a (not necessarily locally finite) metric graph 
${\cal G}$ (i.e. edges have length one) 
and a family ${\cal H}=\{H_c\mid c\in {\cal C}\}$
of complete connected subgraphs, where ${\cal C}$ is 
any countable, finite or empty index set.

Define the \emph{${\cal H}$-electrification}
of ${\cal G}$ to be the 
metric graph $({\cal E\cal G},d_{\cal E})$ which is obtained
from ${\cal G}$ by adding vertices and edges as follows.
For each $c\in {\cal C}$ there is a unique vertex 
$v_c\in {\cal E\cal G}-{\cal G}$. This vertex
is connected with each of the vertices of $H_c$ by a single
edge of length one, and it is not connected with any other vertex.

In the sequel all parametrized 
paths $\gamma$ in ${\cal G}$ or ${\cal E\cal G}$
are supposed to be \emph{simplicial}. This means that
the image of every integer is a vertex, and the image
of an integral interval $[k,k+1]$ is an edge or a single vertex.

Call a simplicial path $\gamma$ in ${\cal E\cal G}$ 
\emph{efficient} if for every $c\in {\cal C}$
we have $\gamma(k)=v_c$ for at most one $k$.
Note that if $\gamma$ is an efficient 
simplicial path in ${\cal E\cal G}$
which passes through $\gamma(k)=v_c$ for some $c\in {\cal C}$
then $\gamma(k-1)\in H_c,\gamma(k+1)\in H_c$.

The following definition is an adaptation of a definition 
from \cite{F98}.

\begin{definition}\label{bcp}
The family ${\cal H}$ has 
the \emph{bounded penetration property} if 
for every $L>0$ there is a number
$p(L)>2r$ with the following property.
Let $\gamma$ be an efficient $L$-quasi-geodesic in 
${\cal E\cal G}$, let $c\in {\cal C}$ and 
let $k\in \mathbb{Z}$ be such that
$\gamma(k)=v_c$. If the distance in $H_c$ between
$\gamma(k-1)$ and $\gamma(k+1)$ is at least $p(L)$ then
every efficient $L$-quasi-geodesic $\gamma^\prime$ 
in ${\cal E\cal G}$ with the
same endpoints as $\gamma$ passes through $v_c$. 
Moreover, if $k^\prime\in \mathbb{Z}$ is such that
$\gamma^\prime(k^\prime)=v_c$ then
the distance in $H_c$ between 
$\gamma(k-1),\gamma^\prime(k^\prime-1)$ and between
$\gamma(k+1),\gamma^\prime(k^\prime+1)$ is at most $p(L)$.
\end{definition}

The definition of relative hyperbolicity for a graph 
below is taken from \cite{S12} where it is shown
to be equivalent to other definitions of
relative hyperbolicity found in the literature.

\begin{definition}\label{realtivhyp}
Let ${\cal H}$ be a family of complete connected
subgraphs of a metric graph ${\cal G}$.
The graph ${\cal G}$ is \emph{hyperbolic relative to 
${\cal H}$} if the ${\cal H}$-electrification of 
${\cal G}$ is hyperbolic and if moreover 
${\cal H}$ has the bounded penetration property.
\end{definition}

From now on we always consider a metric graph
${\cal G}$ which is hyperbolic relative
to a family ${\cal H}=\{H_c\mid c\in {\cal C}\}$ 
of complete connected subgraphs.

We say that the family ${\cal H}$ is \emph{$r$-bounded} 
for a number $r>0$ 
if ${\rm diam}(H_c\cap H_d)
\leq r$ for $c\not=d\in {\cal C}$ where the diameter
is taken with respect to the intrinsic path metric on $H_c$ and
$H_d$. A family which is $r$-bounded for some $r>0$ is
simply called bounded. 

The following is a consequence of the main theorem of 
\cite{S12} (the equivalence of definition RH0 and RH2).

\begin{proposition}
If ${\cal G}$ is hyperbolic relative to the family
${\cal H}$ then ${\cal H}$ is bounded.
\end{proposition}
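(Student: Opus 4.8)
The plan is to unpack the equivalence "RH0 $\Leftrightarrow$ RH2" from \cite{S12} and read off boundedness as an immediate structural consequence. Recall that one of Sisto's characterizations of relative hyperbolicity (RH2, the ``asymptotically tree-graded'' or ``fine hyperbolic'' type condition) asserts, among other things, that the peripheral subgraphs $H_c$ form a collection of subsets of ${\cal G}$ with the property that for $c\neq d$ the intersection $H_c\cap H_d$ has uniformly bounded diameter --- this is part of the axioms of a collection of ``almost orthogonal'' or ``isolated'' subsets that appear in that formulation. So the first step is simply to cite the precise statement of RH2 from \cite{S12}, extract the clause giving a uniform bound $r$ on $\mathrm{diam}(H_c\cap H_d)$, and note that by the stated equivalence our standing hypothesis (${\cal G}$ hyperbolic relative to ${\cal H}$ in the sense of Definition \ref{realtivhyp}, which is RH0) implies RH2.

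If one instead wants a self-contained argument not quoting the internal structure of \cite{S12}, I would argue directly from the bounded penetration property of Definition \ref{bcp} together with hyperbolicity of ${\cal E\cal G}$. Suppose for contradiction that for every $n$ there exist $c_n\neq d_n$ and points $x_n,y_n\in H_{c_n}\cap H_{d_n}$ with $d_{H_{c_n}}(x_n,y_n)\geq n$ (and likewise large distance in $H_{d_n}$, or at least in one of them). Consider an efficient $L$-quasi-geodesic $\gamma_n$ in ${\cal E\cal G}$ from $x_n$ to $y_n$. Because $x_n,y_n\in H_{c_n}$, the two-edge path $x_n\,v_{c_n}\,y_n$ through $v_{c_n}$ is an efficient path of ${\cal E\cal G}$-length $2$ joining them, so $d_{\cal E}(x_n,y_n)\leq 2$ and every $L$-quasi-geodesic between them stays in a bounded ${\cal E\cal G}$-neighborhood. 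The point is that since $d_{H_{c_n}}(x_n,y_n)\geq n$ is eventually larger than $p(L)$, the bounded penetration property (applied to the efficient $L$-quasi-geodesic $x_nv_{c_n}y_n$) forces every efficient $L$-quasi-geodesic with these endpoints --- in particular $x_nv_{d_n}y_n$, which is also such a quasi-geodesic once $d_{H_{d_n}}(x_n,y_n)$ is large --- to pass through $v_{c_n}$; but $x_nv_{d_n}y_n$ has no interior vertex other than $v_{d_n}\neq v_{c_n}$, a contradiction. This pins down that $\mathrm{diam}(H_c\cap H_d)\leq p(L)$ for the relevant $L$, hence ${\cal H}$ is bounded with $r=p(L)$.

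The main obstacle, and the reason I would prefer to lean on \cite{S12} in the write-up, is the case analysis in the direct argument: one must handle the possibility that $x_n,y_n$ are far apart in $H_{c_n}$ but \emph{not} far apart in $H_{d_n}$, so that $x_nv_{d_n}y_n$ is not literally an efficient $L$-quasi-geodesic; here one instead uses a genuine quasi-geodesic of ${\cal E\cal G}$ through $v_{d_n}$ obtained from a geodesic in $H_{d_n}$, and invokes the last clause of Definition \ref{bcp} to compare its penetration of $H_{c_n}$ with that of $x_nv_{c_n}y_n$, again contradicting that $v_{c_n}\neq v_{d_n}$ while both quasi-geodesics must enter $H_{c_n}$ at controlled points. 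Once this bookkeeping is done the contradiction is clean. For the paper's purposes the one-line citation suffices:

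\begin{proof}
This is immediate from the main theorem of \cite{S12}: the hypothesis is condition RH0 of \cite{S12}, hence the equivalent condition RH2 holds, and one of the defining requirements of RH2 is precisely that the collection ${\cal H}$ be bounded.
\end{proof}
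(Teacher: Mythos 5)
Your proof coincides with the paper's: the paper justifies this proposition with exactly the same one-line appeal to the main theorem of \cite{S12} (the equivalence of definitions RH0 and RH2), reading off boundedness of the intersections $H_c\cap H_d$ from the RH2 formulation. The direct argument via the bounded penetration property that you sketch as an alternative does not appear in the paper, but the proof you actually submit is the citation, which is the paper's approach.
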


Let ${\cal H}$ be as in Definition \ref{bcp}.
Define an \emph{enlargement} $\hat \gamma$ of an efficient
 simplicial $L$-quasi-geodesic $\gamma:[0,n]\to {\cal E\cal G}$ 
with endpoints $\gamma(0),\gamma(n)\in {\cal G}$ as
follows. Let $0<k_1<\dots <k_s< n$ be those points
such that $\gamma(k_i)=v_{c_i}$ for some $c_i\in {\cal C}$.
Then $\gamma(k_i-1),\gamma(k_i+1)\in H_{c_i}$. 
For each $i\leq s$ replace 
$\gamma[k_i-1,k_i+1]$ by a simplicial geodesic in $H_{c_i}$ with
the same endpoints.

For a number $k>0$ define a subset $Z$ of the 
metric graph ${\cal G}$ to be 
\emph{$k$-quasi-convex}
if any geodesic with both endpoints in $Z$ is contained in the
$k$-neighborhood of $Z$. In particular, up to perhaps increasing
the number $k$, any two points in $Z$ can be connected
in $Z$ by a (not necessarily continuous) path which is 
a $k$-quasi-geodesic in ${\cal G}$.
The goal of this section is to show

\begin{theorem}\label{hypextension}
Let ${\cal G}$ be a metric graph which is 
hyperbolic relative to a family 
${\cal H}=\{H_c\mid c\}$ of complete connected subgraphs.
If there is a 
number $\delta >0$ such that 
each of the graphs $H_c$ is $\delta$-hyperbolic
then ${\cal G}$ is hyperbolic. Enlargements of 
geodesics in ${\cal E\cal G}$ are uniform quasi-geodesics in ${\cal G}$.
The subgraphs $H_c$ are uniformly quasi-convex.
\end{theorem}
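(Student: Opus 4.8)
The plan is to follow Farb's strategy: un-electrify geodesics of ${\cal E\cal G}$ to uniform quasi-geodesics of ${\cal G}$, and then transport hyperbolicity from ${\cal E\cal G}$ back to ${\cal G}$. Observe first that a geodesic of ${\cal E\cal G}$ is automatically efficient, since it does not repeat a vertex, so its enlargement is defined; the technical core of everything is the claim $(\ast)$ that the enlargement $\hat\gamma$ of a geodesic $\gamma$ of ${\cal E\cal G}$ with endpoints in ${\cal G}$ is a uniform quasi-geodesic in ${\cal G}$. To prove $(\ast)$, write $\gamma:[0,n]\to{\cal E\cal G}$ and let $v_{c_1},\dots,v_{c_s}$ be the cone vertices met by $\gamma$, at parameters $k_1<\dots<k_s$. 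The length of $\hat\gamma$ is $n+\sum_i(d_{H_{c_i}}(\gamma(k_i-1),\gamma(k_i+1))-2)$, so, arguing also for subpaths, it suffices to bound $n=d_{\cal E}(\gamma(0),\gamma(n))$ and the total penetration depth $D=\sum_i d_{H_{c_i}}(\gamma(k_i-1),\gamma(k_i+1))$ by a fixed multiple of $d_{\cal G}(\gamma(0),\gamma(n))$ plus a constant. The bound on $n$ is immediate since ${\cal G}\to{\cal E\cal G}$ is $1$-Lipschitz, and it also gives $s\le n\le d_{\cal G}(\gamma(0),\gamma(n))$. To bound $D$, fix a ${\cal G}$-geodesic $\sigma$ between the two endpoints, electrify it — replacing, for each $c$, the part of $\sigma$ between its first and its last point in $H_c$ by the length-two detour through $v_c$ — and let $\tilde\sigma$ be the resulting efficient simplicial path in ${\cal E\cal G}$. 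The indices $i$ for which $\gamma$ penetrates $H_{c_i}$ to depth below a fixed threshold contribute, all together, at most a constant times $s\le d_{\cal G}(\gamma(0),\gamma(n))$. For the remaining, deeply penetrated $H_{c_i}$, the bounded penetration property applied to $(\gamma,\tilde\sigma)$ forces $\tilde\sigma$ to meet $v_{c_i}$ with entry and exit points within distance $p(L)$ in $H_{c_i}$ of those of $\gamma$; the corresponding subsegments of $\sigma$ are then pairwise disjoint (here boundedness of ${\cal H}$ is used) and each has ${\cal G}$-length comparable to the penetration depth of $\gamma$ into $H_{c_i}$, so summing over these $i$ bounds their total depth by $|\sigma|=d_{\cal G}(\gamma(0),\gamma(n))$ times a constant.

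The main obstacle is hidden in the last two assertions. Applying bounded penetration to $(\gamma,\tilde\sigma)$ requires that $\tilde\sigma$ be a uniform quasi-geodesic in ${\cal E\cal G}$, and passing from ``entry/exit points close in $H_{c_i}$'' to ``the corresponding piece of $\sigma$ is long in ${\cal G}$'' uses that $H_{c_i}$ is undistorted in ${\cal G}$ — which is essentially the quasi-convexity assertion of the theorem. These points are genuinely entangled, and I would untangle them by a bootstrap. First I would prove the length estimate of $(\ast)$ only for subpaths that do not enter the interior of an inserted $H_c$-geodesic: for these one needs only the $\delta$-hyperbolicity of the $H_c$, the definition of bounded penetration, and boundedness of ${\cal H}$ (to separate consecutive deep excursions of a ${\cal G}$-geodesic, and to see that $\tilde\sigma$ is an ${\cal E\cal G}$-quasi-geodesic, since between two deep excursions $\sigma$ runs an honest ${\cal G}$-geodesic and cannot re-enter the same $H_c$). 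Then I would use this weak form of $(\ast)$ to deduce hyperbolicity of ${\cal G}$ and undistortedness of each $H_c$, and finally close the loop to get $(\ast)$ for all subpaths, since geodesics of $H_c$ are by then known to be uniform quasi-geodesics in ${\cal G}$.

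Granting $(\ast)$, hyperbolicity of ${\cal G}$ follows by checking that triangles whose sides are enlargements are uniformly thin: the thinness of the corresponding geodesic triangle in ${\cal E\cal G}$ localizes the comparison of two enlargements to the inserted $H_c$-geodesics, where $\delta$-thinness of triangles in $H_c$ together with bounded penetration gives the estimate. By Bowditch's criterion ${\cal G}$ is then hyperbolic, with hyperbolicity constant controlled by $\delta$, by the hyperbolicity constant of ${\cal E\cal G}$, by the bound on ${\cal H}$ and by the penetration function, and enlargements of geodesics of ${\cal E\cal G}$ lie at uniformly bounded Hausdorff distance from ${\cal G}$-geodesics. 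Uniform quasi-convexity of each $H_c$ is then immediate: for $x,y\in H_c$ the path $x,v_c,y$ is a geodesic of ${\cal E\cal G}$, its enlargement is a geodesic of $H_c$ joining $x$ and $y$ and lies in $H_c$, so any ${\cal G}$-geodesic from $x$ to $y$ stays uniformly close to $H_c$.
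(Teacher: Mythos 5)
Your plan hinges on the step where you bound the total penetration depth $D$ of the ${\cal E\cal G}$-geodesic $\gamma$ by comparing it, via the bounded penetration property, with the path $\tilde\sigma$ obtained by electrifying a ${\cal G}$-geodesic $\sigma$. This is where the proposal has a genuine gap. The bounded penetration property as defined applies only to pairs of \emph{efficient $L$-quasi-geodesics of ${\cal E\cal G}$}, so before you can match the deep excursions of $\gamma$ with cone points on $\tilde\sigma$ you must know that $\tilde\sigma$ is a uniform quasi-geodesic in ${\cal E\cal G}$. Your justification --- boundedness of ${\cal H}$ plus ``between two deep excursions $\sigma$ runs an honest ${\cal G}$-geodesic and cannot re-enter the same $H_c$'' --- does not give this: nothing in the hypotheses directly excludes a ${\cal G}$-geodesic that is very long in ${\cal G}$, has only shallow excursions into the $H_c$ (so that electrification barely shortens it), yet joins points at bounded ${\cal E\cal G}$-distance. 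Ruling that scenario out is essentially equivalent to the statement you are trying to prove (it amounts to the lower quasi-geodesic bound for enlargements, i.e.\ to the distance estimate $d_{\cal G}\preceq d_{\cal E}+\sum(\text{depths})$), so the argument is circular at this point. The ``bootstrap'' you sketch does not break the circle: restricting to subpaths of $\hat\gamma$ that avoid the interiors of the inserted $H_c$-geodesics addresses the \emph{other} entangled issue (distortion of $H_c$ in ${\cal G}$), but the weak form of $(\ast)$ still needs BCP applied to $(\gamma,\tilde\sigma)$ and hence still needs $\tilde\sigma$ to be an ${\cal E\cal G}$-quasi-geodesic.

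The paper avoids this trap by never comparing an enlargement with a ${\cal G}$-geodesic. It proves two statements that require only the hypotheses as given: (i) enlargements of efficient $L$-quasi-geodesics of ${\cal E\cal G}$ with the same endpoints lie at uniformly bounded Hausdorff distance in ${\cal G}$ (Lemma \ref{Lipschitz}, whose proof cuts the quasi-geodesics at their common wide cone points and handles the wide-point-free pieces first), and (ii) the thin triangle property for the family of enlargement paths $\rho_{x,y}$ (Proposition \ref{thin}). It then invokes a hyperbolicity criterion for path families (Proposition 3.5 of \cite{H07}) whose \emph{conclusion} includes that the paths $\rho_{x,y}$ are uniform quasi-geodesics; the quasi-geodesic property is an output of the criterion, not an input. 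If you want to keep your length-counting strategy you would have to first prove, from the definition of relative hyperbolicity used here, that ${\cal G}$-geodesics electrify to uniform efficient quasi-geodesics of ${\cal E\cal G}$ --- a nontrivial lemma in its own right --- or else switch to a criterion of the above type, as the paper does. The final paragraphs of your proposal (thin triangles localized to the inserted $H_c$-geodesics, and quasi-convexity of $H_c$ from the enlargement of the path $x,v_c,y$) are in line with the paper and would be fine once $(\ast)$, or its replacement, is secured.
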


For the remainder of this section we assume that
${\cal G}$ is a graph which is hyperbolic
relative to a family ${\cal H}$ of complete connected  
$\delta$-hyperbolic subgraphs.

For a number $R>2r$ 
call $c\in {\cal C}$ \emph{$R$-wide} for  
an efficient 
$L$-quasi-geodesic $\gamma$ in ${\cal E\cal G}$
if the following holds true. There is some 
$k\in \mathbb{Z}$ such that 
$\gamma(k)=v_c$, and
the distance between $\gamma(k-1),\gamma(k+1)$ in $H_c$ 
is at least $R$. Note that since ${\cal H}$ is $r$-bounded, 
$c$ is uniquely determined
by $\gamma(k-1),\gamma(k+1)$. 
If $R=p(L)$ is as in Definition \ref{bcp} then
we simply say that $c$ is \emph{wide}.

\begin{lemma}\label{wideiswide}
Let $L\geq 1$ and let $\gamma_1,\gamma_2$ be two 
efficient $L$-quasi-geodesics
in ${\cal E\cal C}$ with the same endpoints. If 
$c\in {\cal C}$ is $3p(L)$-wide for $\gamma_1$ then 
$c$ is wide for $\gamma_2$.
\end{lemma}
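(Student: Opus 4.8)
The plan is to derive this essentially immediately from the bounded penetration property (Definition~\ref{bcp}), the factor $3$ being absorbed by a single application of the triangle inequality inside $H_c$.

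First I would record the data supplied by the hypothesis. Since $c$ is $3p(L)$-wide for $\gamma_1$, there is an index $k$ with $\gamma_1(k)=v_c$ and $d_{H_c}(\gamma_1(k-1),\gamma_1(k+1))\geq 3p(L)$, where the distance is measured in the intrinsic path metric of $H_c$; in particular $\gamma_1(k-1),\gamma_1(k+1)\in H_c$ by efficiency of $\gamma_1$. As $3p(L)\geq p(L)$, the hypothesis of the bounded penetration property is met with $\gamma=\gamma_1$. Applying that property to the efficient $L$-quasi-geodesic $\gamma_2$, which has the same endpoints as $\gamma_1$, I obtain that $\gamma_2$ passes through $v_c$, say $\gamma_2(k')=v_c$; efficiency of $\gamma_2$ makes the index $k'$ unique, and since $v_c$ is joined only to vertices of $H_c$ we automatically get $\gamma_2(k'-1),\gamma_2(k'+1)\in H_c$. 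The property moreover furnishes the two comparison bounds $d_{H_c}(\gamma_1(k-1),\gamma_2(k'-1))\leq p(L)$ and $d_{H_c}(\gamma_1(k+1),\gamma_2(k'+1))\leq p(L)$.

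Then I would simply estimate, in the intrinsic metric of $H_c$,
\[
d_{H_c}(\gamma_2(k'-1),\gamma_2(k'+1))\geq d_{H_c}(\gamma_1(k-1),\gamma_1(k+1))-d_{H_c}(\gamma_1(k-1),\gamma_2(k'-1))-d_{H_c}(\gamma_1(k+1),\gamma_2(k'+1))\geq 3p(L)-2p(L)=p(L),
\]
so that $\gamma_2(k')=v_c$ with $d_{H_c}(\gamma_2(k'-1),\gamma_2(k'+1))\geq p(L)$, i.e.\ $c$ is wide for $\gamma_2$, as claimed.

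There is no genuine obstacle here: the statement is more a bookkeeping lemma than a theorem, and the only point to get right is that the constant $3$ in ``$3p(L)$-wide'' is exactly what is needed so that, after discarding the two errors of size at most $p(L)$ produced by bounded penetration, a residual separation of at least $p(L)$ — the threshold in the definition of ``wide'' — survives. I would also be careful throughout that all distances are taken in $H_c$ rather than in ${\cal G}$ or ${\cal E\cal G}$, consistently with Definition~\ref{bcp} and with the definition of $R$-wideness, and note that since $p(L)>2r$ and ${\cal H}$ is $r$-bounded, the index $c$ witnessed by $\gamma_2$ is indeed the prescribed one.
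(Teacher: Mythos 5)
Your proof is correct and follows exactly the same route as the paper's: invoke the bounded penetration property for $\gamma_1$ to force $\gamma_2$ through $v_c$ with the two $p(L)$-comparison bounds, then apply the triangle inequality in $H_c$ to get $3p(L)-2p(L)=p(L)$. No discrepancies to report.
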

\begin{proof} By definition, if $c$ is $3p(L)$-wide for 
$\gamma_1$ then there is some $k$ so that 
$\gamma_1(k)=v_c$ and that the distance in $H_c$ between
$\gamma_1(k-1)$ and $\gamma_1(k+1)$ is at least $3p(L)$.
Since $\gamma_2$ is an efficient $L$-quasi-geodesic with the same
endpoints as $\gamma_1$, 
by the bounded penetration property
there is some $k^\prime$ so that
$\gamma_2(k^\prime)=v_c$, moreover the distance
in $H_c$ between $\gamma_1(k-1)$ and $\gamma_2(k^\prime-1)$ and
between $\gamma_1(k+1)$ and $\gamma_2(k^\prime+1)$ is 
at most $p(L)$. Thus by the triangle inequality, 
the distance in $H_c$ between 
$\gamma_2(k^\prime-1)$ and $\gamma_2(k^\prime+1)$
is at least $p(L)$ which is what we wanted to show.
\end{proof}

Define the \emph{Hausdorff distance} between two 
closed subsets $A,B$ of a metric space 
to be the infimum of the numbers $b>0$ such that
$A$ is contained in the $b$-neighborhood of $B$ and 
$B$ is contained in the $b$-neighborhood of $A$.

\begin{lemma}\label{Lipschitz}
For every $L>0$ there is a 
number $\kappa(L)>0$ with the following property.
Let $\gamma_1,\gamma_2$ be two 
efficient simplicial $L$-quasi-geodesics
in ${\cal E\cal G}$ connecting the same points
in ${\cal G}$, with
enlargements $\hat\gamma_1,\hat\gamma_2$. Then
the Hausdorff distance in ${\cal G}$
between the images of $\hat\gamma_1,
\hat\gamma_2$ is at most $\kappa(L)$.
\end{lemma}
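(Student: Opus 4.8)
The plan is to bound the Hausdorff distance between the enlargements $\hat\gamma_1,\hat\gamma_2$ by working in the electrification ${\cal E\cal G}$, where we have genuine hyperbolicity, and then transferring control back to ${\cal G}$ using the bounded penetration property and $\delta$-hyperbolicity of the peripheral subgraphs. First I would fix the hyperbolicity constant $\hat\delta$ of ${\cal E\cal G}$ and recall that since $\gamma_1,\gamma_2$ are $L$-quasi-geodesics in ${\cal E\cal G}$ with the same endpoints, their images lie at uniformly bounded Hausdorff distance $D=D(L,\hat\delta)$ in ${\cal E\cal G}$ (stability of quasi-geodesics). Now take any point $x$ on $\hat\gamma_1$. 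If $x$ lies on a part of $\gamma_1$ that was not altered in the enlargement (i.e.\ $x$ is not on an inserted geodesic segment inside some $H_{c}$), then $x$ is a point of $\gamma_1\subset {\cal E\cal G}$, so there is a point $y$ on $\gamma_2$ with $d_{\cal E}(x,y)\le D$; I then need to promote this $d_{\cal E}$-bound to a $d_{\cal G}$-bound between $x$ and a point of $\hat\gamma_2$.

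The crux is therefore the following transfer statement: if $x,y$ are vertices of ${\cal G}$ lying on $\gamma_1,\gamma_2$ respectively with $d_{\cal E}(x,y)\le D$, then $d_{\cal G}(x,\hat\gamma_2)$ is bounded in terms of $L,\delta,\hat\delta$. A $d_{\cal E}$-geodesic from $x$ to $y$ of length $\le D$ uses at most $D/2$ vertices $v_c$, each flanked by entry/exit points in $H_c$; between consecutive such cone points it is an honest path in ${\cal G}$ of length $\le D$. So it suffices to handle a single cone point: if the $d_{\cal E}$-geodesic passes through $v_c$, entering $H_c$ at $u$ and leaving at $w$, I must show $u$ (hence $w$) lies uniformly close in ${\cal G}$ to $\hat\gamma_2$. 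Here I invoke bounded penetration together with Lemma \ref{wideiswide}: if $c$ is $3p(L)$-wide for $\gamma_1$ it is wide for $\gamma_2$, so $\gamma_2$ also passes through $v_c$, and the entry/exit points of $\gamma_2$ at $H_c$ are within $p(L)$ of those of $\gamma_1$ in $H_c$; since $H_c$ is $\delta$-hyperbolic, the inserted geodesic segment of $\hat\gamma_2$ in $H_c$ is $2\delta$-close to the inserted segment of $\hat\gamma_1$, so every point of $\hat\gamma_1\cap H_c$ is within $p(L)+2\delta$ of $\hat\gamma_2$. If instead $c$ is \emph{not} $3p(L)$-wide for $\gamma_1$ (the cone point is shallow), then $u$ and $w$ are within $3p(L)$ of each other in $H_c$, so the inserted $H_c$-segment of $\hat\gamma_1$ has diameter $\le 3p(L)$ in ${\cal G}$, and $u$ itself is a ${\cal G}$-point on the unaltered part of $\gamma_1$ near the entrance, which is handled as above or simply absorbed into the constant.

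Assembling these pieces: each point of $\hat\gamma_1$ is either on an inserted wide-$H_c$-segment — controlled by the previous paragraph — or on an inserted shallow-$H_c$-segment of ${\cal G}$-diameter $\le 3p(L)$ whose endpoints are ${\cal G}$-points of $\gamma_1$, or is itself a ${\cal G}$-point of $\gamma_1$. In the last case I walk along the $d_{\cal E}$-geodesic to the nearby point $y\in\gamma_2$, crossing finitely many cone points each costing at most $p(L)+2\delta+3p(L)$ in ${\cal G}$-displacement (moving along inserted segments of $\hat\gamma_2$ and $\hat\gamma_1$ respectively), and picking up at most $D$ more from the ${\cal G}$-subpaths between cone points; this yields $d_{\cal G}(x,\hat\gamma_2)\le D\cdot(4p(L)+2\delta+1)=:\kappa_0(L)$. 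By symmetry the same bound holds with the roles of $\hat\gamma_1,\hat\gamma_2$ exchanged, so the Hausdorff distance is at most $\kappa(L):=\kappa_0(L)$. The main obstacle is the bookkeeping in this transfer step — making sure that the number of cone points traversed is bounded (which is why we pass through a \emph{geodesic} of ${\cal E\cal G}$ of length $\le D$, not an arbitrary path) and that shallow cone points, where bounded penetration gives no information, are nonetheless harmless because the corresponding inserted segments are short in ${\cal G}$.
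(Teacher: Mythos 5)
Your overall architecture (match the wide cone points of $\gamma_1$ and $\gamma_2$ via bounded penetration, compare the inserted $H_c$-geodesics using $\delta$-hyperbolicity of $H_c$, and handle the unaltered parts via stability of quasi-geodesics in ${\cal E\cal G}$) is the same as the paper's, but your central transfer step — promoting $d_{\cal E}(x,y)\leq D$ to a bound on $d_{\cal G}(x,\hat\gamma_2)$ — has a genuine gap. The cone points you must control are the cone points of the connecting ${\cal E\cal G}$-geodesic $\zeta$ from $x$ to $y$, and these are a priori unrelated to the cone points of $\gamma_1$ or $\gamma_2$. Your dichotomy is stated in terms of whether $c$ is $3p(L)$-wide \emph{for $\gamma_1$}: in the wide case you invoke Lemma \ref{wideiswide}, and in the shallow case you assert that the entry and exit points $u,w$ of $\zeta$ at $v_c$ are within $3p(L)$ of each other in $H_c$. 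Neither conclusion follows, because $\gamma_1$ need not pass through $v_c$ at all, and wideness of $c$ for $\gamma_1$ says nothing about the $H_c$-distance between the points where $\zeta$ enters and leaves $H_c$. A geodesic of ${\cal E\cal G}$-length $\leq D$ can pass through a cone point $v_c$ whose entry and exit points are arbitrarily far apart in $H_c$ (the two edges through $v_c$ cost $2$ in ${\cal E\cal G}$ regardless), so without a further argument the "cost per cone point" in your final estimate $D\cdot(4p(L)+2\delta+1)$ is unbounded.

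The missing idea, which is the crux of the paper's proof, is to rule out wide cone points on $\zeta$ itself. The paper chooses $y$ to be a closest point of $\hat\gamma_2$ to $x$ in ${\cal E\cal G}$, forms the \emph{efficient} concatenation $\xi=\zeta\circ\hat\gamma_1[0,u]$ (efficient because $\hat\gamma_1$ avoids all special vertices), observes that $\xi$ is an $L'$-quasi-geodesic with the same endpoints as $\hat\gamma_2[0,w]$, and then applies bounded penetration: if some $c$ were $p(L')$-wide for $\zeta$, then $\hat\gamma_2[0,w]$ would have to pass through $v_c$, which is impossible since enlargements contain no special vertices. Hence an enlargement of $\zeta$ has uniformly bounded length in ${\cal G}$, which is exactly the bound on $d_{\cal G}(x,y)$ you need. (Note this comparison is only valid after reducing to the case where $\hat\gamma_1,\hat\gamma_2$ are themselves uniform quasi-geodesics in ${\cal E\cal G}$, i.e.\ the case where $\gamma_1,\gamma_2$ have no wide cone points; the paper first settles that case completely and then decomposes general $\gamma_1,\gamma_2$ at their matched wide cone points into subsegments to which the first case applies. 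Your proposal skips this reduction as well.)
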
 
\begin{proof}
Let $\gamma:[0,n]\to {\cal E\cal G}$ be an efficient
simplicial $L$-quasi-geodesic with endpoints
$\gamma(0),\gamma(n)\in {\cal G}$. Let $R>p(L)$ and 
assume that  
$c\in {\cal C}$ is
not $R$-wide for $\gamma$.
If there is some $u\in \{1,\dots,n-1\}$ such that $\gamma(u)=v_c$ then 
$\gamma(u-1),\gamma(u+1)\in H_c$.
Since $c$ is not $R$-wide for $\gamma$,
$\gamma(u-1)$ can
be connected to $\gamma(u+1)$ by an arc in 
$H_c$ of length at most $R$. 
In particular, if no $c\in {\cal C}$ 
is $R$-wide for $\gamma$
then an enlargement $\hat \gamma$ of $\gamma$
is an $\hat L$-quasi-geodesic in 
${\cal E\cal G}$ for a universal constant $\hat L=
\hat L(L,R)>0$. 
Then $\hat \gamma$ is a $\hat L$-quasi-geodesic
in ${\cal G}$ as well (note that the inclusion 
${\cal G}\to {\cal E\cal G}$ is $1$-Lipschitz).

Let $\gamma_i:[0,n_i]\to 
{\cal E\cal G}$ be efficient $L$-quasi-geodesics
$(i=1,2)$ with the same endpoints in ${\cal G}$.
Assume that no $c\in {\cal C}$ 
is wide for $\gamma_1$. By Lemma \ref{wideiswide},  
no $c\in {\cal C}$ is $R=3p(L)$-wide for $\gamma_2$.
Let $\hat\gamma_i$ be an enlargement of $\gamma_i$.
By the above discussion, the arcs
$\hat \gamma_i$
are $\hat L=\hat L(L,3p(L))$-quasi-geodesics in ${\cal E\cal G}$.
In particular, by hyperbolicity of ${\cal E\cal G}$, 
the Hausdorff  distance 
in ${\cal E\cal G}$ 
between the images of $\hat\gamma_i$ 
is bounded from above
by a constant $b-1>0$ only depending on $L$ and $R$.

We have to show that the Hausdorff distance in 
${\cal G}$ between these images 
is also uniformly bounded. 
For this let 
$x=\hat \gamma_1(u)$ be any vertex on $\hat\gamma_1$
and let $y=\hat\gamma_2(w)$ be a vertex on $\hat\gamma_2$
of minimal distance in ${\cal E\cal G}$ to $x$. Then 
$d_{\cal E}(x,y)\leq b$ (here as before, $d_{\cal E}$ is the 
distance in ${\cal E\cal G}$, and we let
$d$ be the distance in ${\cal G}$).
Let $\zeta$ be a geodesic in ${\cal E\cal G}$ connecting
$x$ to $y$. Since $y$ is a vertex on $\hat\gamma_2$ of minimal
distance to $x$, $\zeta$ intersects $\hat\gamma_2$ only at its endpoints. 

We claim that there is a universal constant
$\chi >0$ such that no $c\in {\cal C}$  
is $\chi$-wide for $\zeta$. 
Namely, since $\hat\gamma_1$ does not pass through any
of the special vertices in ${\cal E\cal G}$, 
 the concatenation $\xi=\zeta\circ \hat \gamma_1[0,u]$
is efficient. Thus $\xi$ is an efficient
$L^\prime$-quasi-geodesic
in ${\cal E\cal G}$ 
with the same endpoints as $\hat\gamma_2[0,w]$ where
$L^\prime>\hat L>L$ only depends on $L$.
Hence by the bounded penetration property,  
if $c\in {\cal C}$ is 
$p(L^\prime)$-wide for $\zeta$ then
the 
$\hat L$-quasi-geodesic $\hat\gamma_2[0,w]$ 
passes through the vertex $v_c$ which is 
a contradiction.

As a consequence of the above
discussion, the length of an enlargement of $\zeta$ 
is bounded from above by a fixed multiple of
$d_{\cal E}(\hat\gamma_1(u),\hat\gamma_2(w))$,
i.e. it is uniformly bounded.
This shows that $d(\hat\gamma_1(u),\hat\gamma_2(w))$ is uniformly
bounded. As a consequence, the image of $\hat \gamma_1$
is contained in a neighborhood of uniformly bounded diameter
in ${\cal G}$ of the image of $\hat\gamma_2$. Exchanging
$\gamma_1$ and $\gamma_2$ we conclude that indeed
the Hausdorff distance in ${\cal G}$ between
the images of the enlargements $\hat\gamma_1,\hat\gamma_2$
is bounded by a number only depending on $L$. 

Now let $\gamma_j:[0,n_j]\to 
{\cal E\cal G}$ be arbitrary efficient 
$L$-quasi-geodesics $(j=1,2)$ connecting the  
same points in ${\cal G}$. 
Then there are numbers $0< u_1<\dots <u_k<n_1$
such that for every $i\leq k$, $\gamma_1(u_i)=v_{c_i}$
where $c_i\in {\cal C}$ 
is wide for $\gamma_1$, and there are no other
wide points for $\gamma_1$. Put $u_0=-1$
and $u_{k+1}=n_1+1$.

By the bounded penetration property, there are numbers 
$w_i\in \{1,\dots,n_2-1\}$ such that
$\gamma_2(w_i)=\gamma_1(u_i)=v_{c_i}$ for all $i$.
Moreover, the distance in $H_{c_i}$ between
$\gamma_1(u_{i}-1)$ and $\gamma_2(w_{i}-1)$ and
between $\gamma_1(u_{i}+1)$ and 
$\gamma_2(w_{i}+1)$ is at most $p(L)$. 
Since $\gamma_1,\gamma_2$ are $L$-quasi-geodesics
by assumption, we may assume that the special vertices
$v_{c_i}$
appear along $\gamma_2$ in the same order 
as along $\gamma_1$, i.e. that
$0<w_1<\dots <w_k<n_2$.
Put $w_0=-1$ and $w_{k+1}=n_2+1$.

For each $i\leq k$, 
define a simplicial edge path 
$\zeta_i:[a_i,a_{i+1}]\to {\cal E\cal G}$
connecting 
$\zeta_i(a_i)=\gamma_1(u_i+1)\in H_{c_i}$ 
to $\zeta_i(a_{i+1})=\gamma_1(u_{i+1}-1)\in H_{c_{i+1}}$
as the concatentation of the following three arcs.
A geodesic in $H_{c_i}$ connecting
$\gamma_1(u_i+1)$ to $\gamma_2(w_i+1)$
(whose length is at most $p(L)$), the arc 
$\gamma_2[w_i+1,w_{i+1}-1]$ and a geodesic
in $H_{c_{i+1}}$ connecting
$\gamma_2(w_{i+1}-1)$ to $\gamma_2(u_{i+1}-1)$.  
Let moreover $\eta_i=\gamma_1\vert [u_i+1,u_{i+1}-1]$
$(i\geq 0)$.
Then $\eta_i,\zeta_i$ are efficient uniform quasi-geodesics
in ${\cal E\cal G}$ with the same endpoints, and 
$\eta_i$ does not have wide points.

For each $i$ let $\hat\nu_i$ be an enlargement of the arc 
$\nu_i=\gamma_2[w_i+1,w_{i+1}-1]$.
By construction, there is an enlargement $\hat\zeta_i$ 
of the efficient 
quasi-geodesic $\zeta_i$ 
which contains $\hat \nu_i$ as a subarc and 
whose Hausdorff distance in ${\cal G}$ 
to $\hat\nu_i$ is uniformly bounded. Let $\hat \eta_i$
be an enlargement of $\eta_i$. Then 
$\hat\zeta_i,\hat\eta_i$ are enlargements of the efficient 
uniform quasi-geodesics $\zeta_i,\eta_i$ 
in ${\cal E\cal G}$ 
with the same endpoints, and 
$\eta_i$ does not have wide points. Therefore 
by the first part of this proof, the Hausdorff distance
in ${\cal G}$ between
$\hat \eta_i$ and $\hat\zeta_i$ 
is uniformly bounded. 
 Hence the Hausdorff
distance between $\hat\eta_i$ and $\hat\nu_i$ is 
uniformly bounded as well.

There is an enlargement $\hat \gamma_1$ 
of $\gamma_1$ which can be represented as 
\[\hat\gamma_1=\hat\eta_k\circ \sigma_k\circ \dots 
\circ \sigma_1\circ \hat \eta_0\]
where for each $i$, $\sigma_i$ is a geodesic in 
$H_{c_i}$ connecting $\gamma_1(u_i-1)$ to 
$\gamma_1(u_i+1)$.   
Similarly, there is an enlargement
$\hat\gamma_2$ of $\gamma_2$ which can be represented as
\[\hat\gamma_2=\hat\nu_k\circ \tau_k\circ \dots 
\circ \tau_1\circ \hat \nu_0\]
where for each $i$, $\tau_i$ is a geodesic
in $H_{c_i}$ connecting $\gamma_2(w_i-1)$ to 
$\gamma_2(w_i+1)$. 
 
For each $i$ 
the distance in $H_{c_i}$ 
between $\gamma_1(u_i-1)$ and 
$\gamma_2(w_i-1)$ is at most $p(L)$, and the same
holds true for the distance between 
$\gamma_1(u_i+1)$ and $\gamma_2(w_i+1)$. Since 
$H_{c_i}$ 
is $\delta$-hyperbolic
for a constant $\delta>0$ not depending on $c_i$, the Hausdorff
distance in $H_{c_i}$ between
any two geodesics 
connecting $\gamma_1(u_i-1)$
to $\gamma_1(u_i+1)$ 
and connecting 
$\gamma_2(w_i-1)$ to $\gamma_2(w _i+1)$ is uniformly bounded.
Together with the above discussion,
this shows the lemma.
\end{proof}

Let for the moment $X$ be an arbitrary geodesic
metric space. Assume that  
for every pair of points $x,y\in X$ there is a fixed
choice of a path $\rho_{x,y}$
connecting $x$ to $y$. 
The \emph{thin triangle property} for this family
of paths states that
there is a universal number $C>0$ so that for 
any triple $x,y,z$ of points in $X$, the image
of $\rho_{x,y}$ is contained in the $C$-neighborhood of the
union of the images of $\rho_{y,z},\rho_{z,x}$. 

For two vertices $x,y\in {\cal G}$ let $\rho_{x,y}$
be an enlargement of a geodesic in ${\cal E\cal G}$
connecting $x$ to $y$. We have

\begin{proposition}\label{thin}
The thin triangle inequality property holds true for the paths
$\rho_{x,y}$.
\end{proposition}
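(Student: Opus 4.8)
The plan is to reduce the thin triangle property for the enlargements $\rho_{x,y}$ in ${\cal G}$ to the thin triangle property for geodesics in the hyperbolic graph ${\cal E\cal G}$, using Lemma \ref{Lipschitz} as the bridge. First I would fix three vertices $x,y,z\in {\cal G}$ and geodesics $[x,y]$, $[y,z]$, $[z,x]$ in ${\cal E\cal G}$, with enlargements $\rho_{x,y},\rho_{y,z},\rho_{z,x}$. Since ${\cal E\cal G}$ is $\delta_{\cal E}$-hyperbolic, the geodesic $[x,y]$ lies in the $4\delta_{\cal E}$-neighborhood in ${\cal E\cal G}$ of $[y,z]\cup[z,x]$; pick a point $x'=[x,y](u)$ and a point on $[y,z]\cup[z,x]$, say on $[y,z]$, within ${\cal E\cal G}$-distance $4\delta_{\cal E}$ of it. The issue is that $x'$ is a point of the ${\cal E\cal G}$-geodesic, not of the enlargement, and closeness in ${\cal E\cal G}$ is not closeness in ${\cal G}$. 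So I would instead argue on the level of the enlargements themselves.

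The cleaner route: let $p=\rho_{x,y}(u)$ be a vertex on the enlargement $\rho_{x,y}$. I want to show $p$ is within uniformly bounded ${\cal G}$-distance of $\rho_{y,z}\cup\rho_{z,x}$. The key device is to build an auxiliary efficient $L$-quasi-geodesic in ${\cal E\cal G}$ from $y$ to $x$ (for a uniform $L$) that passes through $p$ and whose enlargement therefore has image within uniformly bounded ${\cal G}$-Hausdorff distance of $\rho_{x,y}$ by Lemma \ref{Lipschitz}, and then a corresponding auxiliary efficient quasi-geodesic realized as a concatenation of initial segments of $[y,z]$ and $[z,x]$ together with a short ${\cal E\cal G}$-geodesic connecting a point of one to a point of the other — this concatenation is an efficient uniform quasi-geodesic from $y$ to $x$ by the standard local-to-global / thin triangle argument in ${\cal E\cal G}$, provided the connecting segment is chosen via a nearest-point projection and then made efficient (each special vertex visited at most once). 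Its enlargement, again by Lemma \ref{Lipschitz}, is ${\cal G}$-Hausdorff close to $\rho_{x,y}$; but by construction (away from the short connecting piece) it runs along enlargements of subsegments of $[y,z]$ and $[z,x]$, hence along subarcs of $\rho_{y,z}$ and $\rho_{z,x}$ up to the bounded error furnished by Lemma \ref{Lipschitz} applied to those subsegments. Chasing the point $p$ through these two comparisons yields a vertex of $\rho_{y,z}\cup\rho_{z,x}$ at uniformly bounded ${\cal G}$-distance from $p$.

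The main obstacle is precisely the production of the connecting arc as an \emph{efficient} uniform quasi-geodesic in ${\cal E\cal G}$: given $x'$ on $[x,y]$ and a nearest point $z'$ on $[y,z]\cup[z,x]$, the concatenation (subsegment of $[x,y]$)$\circ$(${\cal E\cal G}$-geodesic $[x',z']$)$\circ$(subsegment of $[y,z]\cup[z,x]$) is an $L$-quasi-geodesic in ${\cal E\cal G}$ for a uniform $L$ by thinness of triangles, but it may repeat a special vertex $v_c$; replacing the path between the first and last occurrence of each such $v_c$ by the single segment $\gamma(j-1),v_c,\gamma(j'+1)$ (legal since both neighbors lie in $H_c$) produces an efficient path, and one must check this surgery only improves the quasi-geodesic constant by a uniform amount — this is routine but must be stated. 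Once efficiency is in hand, Lemma \ref{Lipschitz} does all the remaining work, and the uniform constant $C$ in the thin triangle property is $\kappa(L)$ plus the bound coming from thinness in ${\cal E\cal G}$ plus the bounded-penetration constant $p(L)$; exchanging roles of the three sides and of the enlargements finishes the proof.

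Alternatively, and perhaps more transparently, I would first establish a \emph{one-sided} comparison: if $\sigma$ is an efficient uniform quasi-geodesic in ${\cal E\cal G}$ from $x$ to $y$, then every enlargement of $\sigma$ is ${\cal G}$-Hausdorff close to $\rho_{x,y}$ (immediate from Lemma \ref{Lipschitz}). Then the whole proposition follows by: (i) taking the ${\cal E\cal G}$-thin triangle on $x,y,z$; (ii) subdividing $[x,y]$ at the two ``fellow-travelling transition'' points where it is close to $[z,x]$ resp.\ $[y,z]$, so that one subsegment fellow-travels $[z,x]$ and the complementary one fellow-travels $[y,z]$ in ${\cal E\cal G}$; (iii) observing that the concatenation along $[z,x]$ up to the transition point, then a bounded ${\cal E\cal G}$-jump, then back along $[x,y]$, is an efficient uniform quasi-geodesic from $x$ to the transition point with the same endpoints as a subsegment of $[z,x]$, so by Lemma \ref{Lipschitz} (applied to the two subsegments) the corresponding pieces of $\rho_{x,y}$ and $\rho_{z,x}$ are ${\cal G}$-close; and symmetrically for $\rho_{y,z}$. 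Either way, the heart of the matter is the efficiency bookkeeping combined with Lemma \ref{Lipschitz}; no new hyperbolicity input beyond that of ${\cal E\cal G}$ and the $\delta$-hyperbolicity of the $H_c$ is needed.
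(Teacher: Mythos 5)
Your overall strategy --- reduce to thinness of triangles in ${\cal E\cal G}$, build an efficient comparison quasi-geodesic from $x$ to $y$ out of subsegments of the other two sides plus a short connector, and invoke Lemma \ref{Lipschitz} --- is the same skeleton as the paper's proof (the paper organizes it as a tripod with legs $\beta_i$ meeting at a center $y$, rather than via fellow-travelling transition points, but this is cosmetic). The efficiency surgery you flag as ``the main obstacle'' is indeed needed and is handled in the paper by shifting the center point; your version of it is fine.

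The genuine gap is elsewhere: you assert that once efficiency is arranged, ``Lemma \ref{Lipschitz} does all the remaining work,'' and this fails precisely when the center of the triangle lies deep inside a peripheral subgraph. Concretely, the short connector from $q\in[x,z]$ to $q'\in[z,y]$ has ${\cal E\cal G}$-length bounded by the hyperbolicity constant, but it may still be $R$-wide at some $v_c$ for arbitrarily large $R$ (e.g.\ $q,q'\in H_c$ at huge $H_c$-distance, both adjacent to $v_c$). Its enlargement then contains a long geodesic in $H_c$, and Lemma \ref{Lipschitz} only tells you that points of $\rho_{x,y}$ are close to the enlargement of the \emph{comparison path} --- including this long $H_c$-segment, which is not a priori anywhere near $\rho_{y,z}\cup\rho_{z,x}$. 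You cannot rule this configuration out by a bounded-penetration argument either, since the only candidate competitor path from $q$ to $q'$ running along the two sides through $z$ is not a quasi-geodesic. What is actually true, and what the paper proves as its Case 1, is that in this situation all three sides are wide at $v_c$, their enlargements contain long $H_c$-geodesics forming a geodesic triangle in $H_c$ whose vertices are the three entry points (matched up to $p(L)$ by bounded penetration), and one must then invoke $\delta$-hyperbolicity of $H_c$ \emph{directly} to get thinness of that triangle inside $H_c$. This is the one step of the whole proposition where uniform hyperbolicity of the $H_c$ enters beyond its use inside Lemma \ref{Lipschitz}, and your write-up never isolates it; without it the argument does not close.
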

\begin{proof} Let $x_1,x_2,x_3$ be three vertices in ${\cal G}$
and for $i=1,2,3$ let $\gamma_i:[0,n_i]\to {\cal E\cal G}$
be a geodesic connecting $x_i$ to $x_{i+1}$.

By hyperbolicity of ${\cal E\cal G}$ there is a 
number $L>0$ not depending on the points $x_i$, and there is  
a vertex
$y\in {\cal E\cal G}$ with the following property. For $i=1,2,3$ let
$\beta_i:[0,p_i]\to {\cal E\cal G}$ 
be a geodesic in ${\cal E\cal G}$ connecting
$x_i$ to $y$. Then for all $i$, $\alpha_i=\beta_{i+1}^{-1}\circ \beta_i$ is an 
$L$-quasi-geodesic connecting $x_i$ to $x_{i+1}$.

We claim that without loss of generality
we may assume that the quasi-geodesics
$\alpha_i$ are efficient. Namely, 
since the arcs $\beta_i$ are geodesics, they do not backtrack. 
Thus if $\alpha_1$ is \emph{not} efficient then
there is a common point $y$ on $\beta_1$ and $\beta_2$.
Let $s_1<p_1$ be the smallest number so that
$\beta_1(s_1)=\beta_2(s_2)$ for some
$s_2\in \beta_2[0,p_2]$. Then the distance between
$y$ and $\beta_i(s_i)$ $(i=1,2)$ is uniformly bounded,
and $\tilde\alpha_1=(\beta_2 [0,s_2])^{-1}\circ 
\beta_1[0,s_1]$ is an efficient $L$-quasi-geodesic 
connecting $x_1$ to $x_2$. 
Replace $y$ by $\beta_1(s_1)$, 
 replace $\beta_i$
by $\tilde \beta_i=\beta_i[0,s_i]$ $(i=1,2)$ and 
and replace $\beta_3$ by
a geodesic $\tilde \beta_3:[0,\tilde p_3]\to {\cal E\cal G}$
connecting $x_3$ to $\beta_1(s_1)$. 
Thus 
up to increasing the number $L$ by a uniformly bounded
amount 
we may assume that the quasi-geodesic  $\alpha_1$ is efficient.

Assume from now on that $\beta_1,\beta_2,\beta_3$ are such that
the quasi-geodesic $\alpha_1=\beta_2^{-1}\circ \beta_1$ is efficient. 
Using the notation from the second paragraph of this proof, 
if there is some $s<p_3$ such that $\beta_3(s)$ 
is contained in
$\hat\alpha_1$ then let $s_3$ be the smallest number
with this property. 
Replace the point $y=\beta_i(p_i)$ by $\beta_3(s_3)$,
replace $\beta_3$ by $\beta_3[0,s_3]$ and for $i=1,2$ replace
$\beta_i$ by the subarc of $\alpha_1$ connecting
$x_i$ to $\beta_3(s_3)$. With this construction, up to
increasing the number $L$ by a uniformly bounded amount
and perhaps replacing $\beta_1,\beta_2$ by uniform quasi-geodesics
rather than geodesics we may assume that all 
three quasi-geodesics $\tilde \alpha_i=
\tilde \beta_{i+1}^{-1}\circ \tilde \beta_i$ 
$(i=1,2,3)$ are efficient.

Resuming notation, assume from now on that the 
quasi-geodesics $\alpha_i$ are efficient. 
By Lemma \ref{Lipschitz}, the Hausdorff distance
between an enlargement of the geodesic $\gamma_i$ and 
any choice of an 
enlargement of the efficient uniform quasi-geodesic $\alpha_i$ with
the same endpoints is uniformly bounded.
Thus it suffices to show the thin triangle 
property for enlargements of the quasi-geodesics $\alpha_i$.

If $y\in {\cal G}$ then an enlargement
of the quasi-geodesic $\alpha_i$ is the concatenation of 
an enlargement of the quasi-geodesic 
$\beta_i$ and
an enlargement of the quasi-geodesic 
$\beta_{i+1}^{-1}$ which have endpoints in ${\cal G}$. 
Hence in this case the thin triangle
property follows once more from Lemma \ref{Lipschitz}.

If $y=v_c$ for some $c\in {\cal C}$ then 
we distinguish two cases. 

{\sl Case 1:} $c\in {\cal C}$ is wide for each of the
quasi-geodesics $\alpha_i$. 

Recall that $y=\beta_i(p_i)$. 
By hyperbolicity of $H_c$,
there is a number $R>0$ not depending on $c$
such that for all $i\in \{1,2,3\}$ the image of 
any geodesic in $H_c$ connecting
$\beta_i(p_i-1)$ to $\beta_{i+1}(p_{i+1}-1)$ 
 is contained in the $R$-neighborhood of the 
 union of the images of any two geodesics 
connecting $\beta_j(p_j-1)$ to 
$\beta_{j+1}(p_{j+1}-1)$ for $j\not=i$ and where
indices are taken modulo three.
In other words, the thin triangle property holds true for such 
geodesics.

Now let $\hat\alpha_i$ be an enlargement of $\alpha_i$
and let $\zeta_i$ be the subarc of $\hat\alpha_i$ 
which connects $\beta_i(p_i-1)$ to $\beta_{i+1}(p_{i+1}-1)$.
By the definition of an enlargement, $\zeta_i$ is a geodesic
in $H_c$. Thus by the discussion in the previous 
paragraph and by the fact that we may use the same
enlargement of the arc $\beta_{i+1}[0,p_{i+1}-1]$ for the 
construction of an enlargement of $\alpha_{i}$ and $\alpha_{i+1}$, 
the thin triangle property holds true for some
suitable choice and hence any choice of an enlargement of 
the quasi-geodesics 
$\alpha_i$ which is what we wanted to show.

{\sl Case 2:} For at least one $i$, $c\in {\cal C}$ is not
wide for the quasi-geodesic $\alpha_i$.

Assume that this holds true for the quasi-geodesic $\alpha_1$.
Then the distance in $H_c$ between $\beta_1(p_1-1)$ and 
$\beta_2(p_2-1)$ is uniformly bounded (depending on the 
quasi-geodesic constant for $\alpha_1$).
Replace the point $y$ by $\beta_1(p_1-1)$, replace the
quasi-geodesic $\beta_1$ by $\tilde \beta_1=\beta_1[0,p_1-1]$, 
replace the quasi-geodesic $\beta_2$ by the concatentation 
$\tilde \beta_2$ of $\beta_2[0,p_2-1]$ with a geodesic in 
$H_c$ connecting $\beta_2(p_2-1)$ to $\beta_1(p_1-1)$,
and replace the geodesic 
$\beta_3$ by the concatentation $\tilde \beta_3$ of $\beta_3$ with the
edge connecting $v_c$ to $\beta_1(p_1-1)$.
The resulting arcs $\tilde \beta_i$ are efficient uniform quasi-geodesics in 
${\cal E\cal G}$, and they connect the points $x_i$ to $y\in {\cal G}$.
Moreover, the quasi-geodesics $\tilde \beta_{i+1}^{-1}\circ \tilde \beta_i$ 
are effficient as well and hence we are done by the 
above proof for the case $y\in {\cal G}$. 
\end{proof}

Now we are ready to show

\begin{corollary}\label{secondlevel}
${\cal G}$ is hyperbolic.
Enlargements of geodesics in ${\cal E\cal G}$
are uniform quasi-geodesics in ${\cal G}$.
\end{corollary}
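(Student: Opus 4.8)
The plan is to derive Corollary~\ref{secondlevel} as a packaging of Proposition~\ref{thin} together with a few standard facts about quasi-geodesics and thin triangles. First I would observe that for any two vertices $x,y\in{\cal G}$ the path $\rho_{x,y}$ — an enlargement of an ${\cal E\cal G}$-geodesic — is a uniform quasi-geodesic in ${\cal G}$; this is exactly the content obtained by applying Lemma~\ref{Lipschitz} to the pair consisting of the ${\cal E\cal G}$-geodesic and an efficient ${\cal E\cal G}$-geodesic with no wide points (or, in the generic case, by the ``no wide points'' sub-argument inside the proof of Lemma~\ref{Lipschitz}, where an enlargement of an efficient quasi-geodesic with bounded-penetration arcs is shown to be an $\hat L$-quasi-geodesic in ${\cal E\cal G}$, hence in ${\cal G}$). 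Concretely: decompose $\rho_{x,y}$ at its wide points into the pieces $\hat\eta_i$ and $\sigma_i$ as in Lemma~\ref{Lipschitz}; each $\hat\eta_i$ is a uniform quasi-geodesic in ${\cal G}$ and each $\sigma_i$ is a geodesic in a $\delta$-hyperbolic $H_{c_i}$, and the bounded penetration property prevents the concatenation from backtracking by more than a bounded amount, so the whole path is a uniform quasi-geodesic in ${\cal G}$. This establishes the second assertion of the Corollary.

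Next I would invoke the standard principle that a geodesic metric space in which some family of uniform quasi-geodesics $\{\rho_{x,y}\}$ connecting all pairs of points satisfies the thin-triangle property is itself Gromov hyperbolic, with hyperbolicity constant controlled by the quasi-geodesic constants and the thinness constant $C$ of Proposition~\ref{thin}. One clean way to phrase this: the thin-triangle property for $\rho_{x,y}$ implies that actual geodesic triangles in ${\cal G}$ are uniformly thin — a geodesic $[x,y]$ lies in a bounded neighborhood of $\rho_{x,y}$ (both being quasi-geodesics with common endpoints, once we know ${\cal G}$ is a length space and that quasi-geodesics with the same endpoints in a space with thin $\rho$-triangles fellow-travel; alternatively one runs the classical Morse-type argument directly), hence $[x,y]\subset \mathrm{Nn}_{C'}([y,z]\cup[z,x])$. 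Since ${\cal G}$ is a connected metric graph it is a geodesic space, so this yields $\delta'$-hyperbolicity of ${\cal G}$ for an explicit $\delta'$.

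The step I expect to be the main obstacle — or at least the one requiring care rather than routine citation — is extracting genuine hyperbolicity of ${\cal G}$ from thinness of the \emph{particular} family $\rho_{x,y}$, since a priori $\rho_{x,y}$ need neither vary continuously nor be geodesic, and the slickest formulations of ``thin quasi-geodesic triangles $\Rightarrow$ hyperbolic'' in the literature are stated for geodesics. I would handle this by noting that each $\rho_{x,y}$ is a $\lambda$-quasi-geodesic with $\lambda$ independent of $x,y$, that any $\lambda$-quasi-geodesic can be replaced by a continuous one at bounded cost, and then applying a standard lemma (e.g. the one behind the Morse stability argument: in a space where a family of uniform quasi-geodesics joining all pairs has $C$-thin triangles, any two uniform quasi-geodesics with the same endpoints are within bounded Hausdorff distance, and consequently geodesic triangles are uniformly thin). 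Once geodesic triangles are uniformly thin, Gromov hyperbolicity is immediate, and the uniform quasi-geodesic property of enlargements of ${\cal E\cal G}$-geodesics has already been recorded, completing the proof of the Corollary — and with it the hyperbolicity half of Theorem~\ref{hypextension}, the uniform quasi-convexity of the $H_c$ being taken up separately.
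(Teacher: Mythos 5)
There is a genuine gap, and it sits in your first step. You want to establish, before knowing that ${\cal G}$ is hyperbolic, that an enlargement $\rho_{x,y}$ of an ${\cal E\cal G}$-geodesic is a uniform quasi-geodesic in ${\cal G}$, and you offer Lemma \ref{Lipschitz} plus a decomposition at wide points as the justification. But Lemma \ref{Lipschitz} only controls the Hausdorff distance between two enlargements with the same endpoints; it does not assert that either one is a quasi-geodesic in ${\cal G}$. The ``no wide points'' sub-argument inside its proof covers exactly the case where there are no wide points, and by Lemma \ref{wideiswide} you cannot in general find an efficient ${\cal E\cal G}$-geodesic without wide points joining two given vertices, so it does not extend to the generic case. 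In the decomposition $\hat\eta_k\circ\sigma_k\circ\dots\circ\sigma_1\circ\hat\eta_0$, each $\sigma_i$ is a geodesic for the \emph{intrinsic} metric of $H_{c_i}$; for $\sigma_i$ to be a uniform quasi-geodesic in ${\cal G}$ you need the subgraphs $H_c$ to be uniformly undistorted, which is precisely Corollary \ref{quasiconvex} --- a consequence of the corollary you are proving. And even granting quasi-geodesic pieces, the step ``a non-backtracking concatenation of uniform quasi-geodesics is a uniform quasi-geodesic'' is a local-to-global principle valid in hyperbolic spaces, i.e.\ it presupposes hyperbolicity of ${\cal G}$. Finally, the bounded penetration property is a statement about efficient quasi-geodesics in ${\cal E\cal G}$; to apply it to a ${\cal G}$-geodesic between two points of $\rho_{x,y}$ you would first have to know that such a geodesic is a uniform quasi-geodesic in ${\cal E\cal G}$, which is again not available. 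So the first half of your argument is circular at three separate points.

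The paper's proof is arranged precisely to avoid this chicken-and-egg problem: it cites Proposition 3.5 of \cite{H07}, a path-family criterion whose hypotheses are (1) bounded diameter of $\eta_{x,y}$ when $d(x,y)\leq 1$, (2) coherence of the family under restriction (Hausdorff-closeness of $\eta_{x,y}[s,t]$ to $\eta_{\eta_{x,y}(s),\eta_{x,y}(t)}$, which does follow from Lemma \ref{Lipschitz}), and (3) the thin triangle property (Proposition \ref{thin}) --- none of which require knowing in advance that the paths are quasi-geodesics --- and whose \emph{conclusion} delivers both hyperbolicity of ${\cal G}$ and the uniform quasi-geodesic property of the paths simultaneously. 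Your second step, by contrast, is essentially sound: thin triangles for a family of uniform quasi-geodesics joining all pairs does yield hyperbolicity by the standard guessing-geodesics argument. But you are feeding that machine an input you have not proved. The repair is to replace your first step by the verification of properties (1) and (2) and to let the criterion produce the quasi-geodesic statement as output rather than trying to prove it by hand.
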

\begin{proof} For any pair $(x,y)$ of vertices in ${\cal G}$ let
$\eta_{x,y}$ be a reparametrization on $[0,1]$ of 
the path $\rho_{x,y}$. 
By Proposition 3.5 of \cite{H07}, 
it suffices to 
show that there is some $n>0$ such that
the paths $\eta_{x,y}$ have the following
properties (where $d$ 
is the distance in ${\cal G}$).
\begin{enumerate}
\item If $d(x,y)\leq 1$ then the diameter of 
$\eta_{x,y}[0,1]$ is at most $n$.
\item For $x,y$ and $0\leq s<t<1$ the 
Hausdorff distance between
$\eta_{x,y}[s,t]$ and 
$\eta_{\eta_{x,y}(s),\eta_{x,y}(t)}[0,1]$ is 
at most $n$.
\item For all vertices $x,y,z$ the set
$\eta_{x,y}[0,1]$ is contained in the $n$-neighborhood
of $\eta_{x,y}[0,1]
\cup \eta_{y,z}[0,1]$.
\end{enumerate}  
Properties 1) and 2) above are immediate from 
Lemma \ref{Lipschitz}. The thin triangle
property 3) follows from Proposition \ref{thin}.
\end{proof}

The following corollary is an
immediate consequence of Corollary \ref{secondlevel}.

\begin{corollary}\label{quasiconvex}
There is a number $k>0$ such that each of the subgraphs
$H_c$ $(c\in {\cal C})$ is $k$-quasi-convex.
\end{corollary}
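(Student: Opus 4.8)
The plan is to deduce uniform quasi-convexity of the $H_c$ directly from the description of quasi-geodesics in ${\cal G}$ provided by Corollary \ref{secondlevel}, namely that enlargements of geodesics in ${\cal E\cal G}$ are uniform quasi-geodesics in ${\cal G}$. Fix $c\in{\cal C}$ and two vertices $x,y\in H_c$. I would first take the edge path $\gamma$ in ${\cal E\cal G}$ of length $2$ that runs $x\to v_c\to y$; this is an efficient $L$-quasi-geodesic in ${\cal E\cal G}$ for a uniform $L$ (it is a geodesic unless $x,y$ are ${\cal E\cal G}$-close, and in the latter case a bounded modification makes it efficient and quasi-geodesic with uniform constant). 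Its enlargement $\hat\gamma$ replaces the segment $x\to v_c\to y$ by a simplicial geodesic in $H_c$ joining $x$ to $y$; call this geodesic $\sigma\subset H_c$. By Corollary \ref{secondlevel} (and Lemma \ref{Lipschitz}, which lets me compare $\hat\gamma$ with the enlargement of any geodesic in ${\cal E\cal G}$ between $x$ and $y$), $\sigma$ is a uniform quasi-geodesic in ${\cal G}$, say an $\lambda$-quasi-geodesic with $\lambda$ independent of $c,x,y$.

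Next I would invoke hyperbolicity of ${\cal G}$, now available from Corollary \ref{secondlevel}. In a $\delta_0$-hyperbolic space any $\lambda$-quasi-geodesic lies within Hausdorff distance $M=M(\delta_0,\lambda)$ of any geodesic with the same endpoints. Hence a ${\cal G}$-geodesic $[x,y]$ from $x$ to $y$ is contained in the $M$-neighbourhood of $\sigma$. Since $\sigma\subset H_c$, this already shows $[x,y]$ lies in the $M$-neighbourhood of $H_c$, which is exactly $M$-quasi-convexity of $H_c$ with a constant $k=M$ independent of $c$. I would state it in this form, matching the definition of $k$-quasi-convexity recalled just before Theorem \ref{hypextension}.

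The one point that needs a little care — and the only place where an actual obstacle might hide — is the very first step: ensuring that the two-edge path through $v_c$ (or a bounded perturbation of it) is genuinely an \emph{efficient} uniform quasi-geodesic in ${\cal E\cal G}$, so that Lemma \ref{Lipschitz} and the conclusion of Corollary \ref{secondlevel} apply to its enlargement. When $d_{\cal E}(x,y)\ge 2$ the path $x\to v_c\to y$ is already an efficient geodesic, so there is nothing to do; when $d_{\cal E}(x,y)\le 1$ both $x$ and $y$ lie in $H_c$ and are joined in ${\cal E\cal G}$ by an edge, but then a geodesic in $H_c$ between $x$ and $y$ has ${\cal E\cal G}$-length at most $2$ (route through $v_c$), so its ${\cal G}$-length is uniformly bounded and $[x,y]$ is automatically within a bounded neighbourhood of $H_c$. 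So the small-distance case is handled by a triviality and the main case by the machinery already built; no new estimate is required beyond the standard Morse/stability lemma for quasi-geodesics in hyperbolic spaces.
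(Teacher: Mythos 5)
Your argument is correct and is exactly the intended one: the paper offers no written proof, declaring the corollary an immediate consequence of Corollary \ref{secondlevel}, and the way it is immediate is precisely your route — the enlargement of the two-edge path $x\to v_c\to y$ is a geodesic in $H_c$, hence by Corollary \ref{secondlevel} a uniform quasi-geodesic in ${\cal G}$, and Morse stability in the now-hyperbolic ${\cal G}$ places any geodesic $[x,y]$ in a uniformly bounded neighbourhood of $H_c$. Your care with the degenerate case $d_{\cal E}(x,y)\leq 1$ is a reasonable extra detail that the paper silently elides.
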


We complete this section with a calculation of the Gromov 
boundary of ${\cal G}$.

Let as before ${\cal E\cal G}$ be the ${\cal H}$-electrification
of ${\cal G}$. Denote by $\partial {\cal E\cal G}$ be the
Gromov boundary of ${\cal E\cal G}$. For each 
$c\in {\cal C}$ let moreover $\partial H_c$ be the Gromov boundary of 
$H_c$. We equip 
\[\partial {\cal G}=\partial {\cal E\cal G}\cup_c\partial H_c\]
with a topology which is determined by describing for each 
point $\xi\in \partial {\cal G}$ a neighborhood basis as follows.

Let first $\xi\in \partial {\cal E\cal G}$. 
Let $L>1$ be such that every point 
$x\in {\cal G}$ can be connected in ${\cal E\cal G}$ 
to every point $\zeta\in 
\partial{\cal E\cal G}$ by an $L$-quasi-geodesic.

Let $\delta_{\cal E}$ be the Gromov 
metric on $\partial {\cal E\cal G}$ based at a fixed point $x\in {\cal G}$.
For $\epsilon >0$ let ${\cal C}_\xi(\epsilon)$ be the collection of 
all $c\in {\cal C}$ such that there is an $L$-quasi-geodesic 
$\gamma$ connecting $x$ to a point in the $\epsilon$-neighborhood of 
$\xi$ in $(\partial {\cal E\cal G},\delta_{\cal E})$ 
whose tail $\gamma[-\log \epsilon,\infty)$ passes through
the $p(L)$-neighborhood of $v_c$. 
Define 
$B_\epsilon(\xi)\subset \partial {\cal G}$ by
\[B_\epsilon(\xi)=\{\zeta\subset \partial {\cal E\cal G},
\delta_{\cal E}(\zeta,\xi)<\epsilon\}\cup \bigcup_{c\in {\cal C}_\xi(\epsilon)}
\partial H_c.\]
Clearly we have $\cap_{\epsilon >0}B_\epsilon(\xi)=\xi$.
Moreover, changing the basepoint $x$ yields an equivalent neighborhood 
basis.

If $c\in {\cal C}$ and $\xi\in \partial H_c$ 
then choose a basepoint $x\in H_c$ and an 
$L$-quasi-geodesic $\gamma:[0,\infty)\to H_c$
connecting $\gamma(0)=x$ to $\xi$. For $\epsilon >0$ 
let ${\cal C}_\xi(\epsilon)$
be the collection of all $d\in {\cal C}$
such that a geodesic in ${\cal E\cal G}$ connecting
$x$ to $v_d$ passes through the $p(L)$-neighborhood of 
$\gamma[-\log\epsilon,\infty)$ in $H_c$. 
Note that this makes sense since the vertex $v_c$ is 
only connected to vertices in $H_c$ by an edge.

Let moreover $\hat B_\epsilon$ be the set of all 
$\beta\in \partial {\cal E\cal G}$ such that 
an $L$-quasi-geodesic in ${\cal E\cal G}$ 
connecting $x$ to $\beta$ passes through the
$p(L)$-neighborhood of $\gamma[-\log \epsilon,\infty)$ in 
$H_c$. Define 
\[B_\epsilon(\xi)=\hat B_\epsilon\cup \bigcup_{d\in C_\xi(\epsilon)}\partial H_d.\]
By the bounded penetration property, this makes sense. Declare the 
family of sets $B_\epsilon(\xi)$ to be a neighborhood basis of 
$\xi\in \partial \zeta$.
We have

\begin{proposition}\label{boundary}
$\partial {\cal G}$ is the Gromov boundary of ${\cal G}$.
\end{proposition}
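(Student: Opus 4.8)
The plan is to verify that the topological space $\partial{\cal G}$ constructed above, namely $\partial{\cal E\cal G}\cup\bigcup_c\partial H_c$ with the described neighborhood bases, is homeomorphic to the Gromov boundary of the (now known to be hyperbolic, by Corollary \ref{secondlevel}) graph ${\cal G}$. The natural map to exhibit is the following: a point of $\partial{\cal G}$ (in the sense of the abstract construction) is represented either by a quasi-geodesic ray in ${\cal E\cal G}$ that is not eventually trapped near any $v_c$, or by a quasi-geodesic ray in some $H_c$; in the first case I send it to the equivalence class of the enlargement of that ray (which by Corollary \ref{secondlevel} is a uniform quasi-geodesic ray in ${\cal G}$, hence defines a point of the genuine Gromov boundary $\partial_\infty{\cal G}$), and in the second case I send a ray $\gamma:[0,\infty)\to H_c$ to its class in $\partial_\infty{\cal G}$, which makes sense because $H_c$ is $k$-quasi-convex in ${\cal G}$ by Corollary \ref{quasiconvex}, so a geodesic ray in $H_c$ is a uniform quasi-geodesic ray in ${\cal G}$. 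First I would check that this map is well-defined and injective: two abstract boundary points give the same point of $\partial_\infty{\cal G}$ iff the associated uniform quasi-geodesic rays stay bounded Hausdorff distance apart, and one must see that this cannot happen for two genuinely distinct abstract points — a ray into $\partial H_c$ and a ray into $\partial H_d$ with $c\ne d$ fellow-travel in ${\cal G}$ only if they fellow-travel in the intersection, which is $r$-bounded; a ray representing a point of $\partial{\cal E\cal G}$ that is not trapped near any $v_c$ cannot fellow-travel a ray confined to $H_c$, by the bounded penetration property together with the fact that an efficient quasi-geodesic ray into $\partial{\cal E\cal G}$ has enlargement which leaves every $H_c$ after bounded penetration.

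Next I would prove surjectivity: given any geodesic ray $\sigma:[0,\infty)\to{\cal G}$, I push it into ${\cal E\cal G}$ (the inclusion is $1$-Lipschitz), extract from it an efficient uniform quasi-geodesic ray $\bar\sigma$ in ${\cal E\cal G}$ with the same endpoint data — here one uses hyperbolicity of ${\cal E\cal G}$ and the same surgery (removing backtracking at the cone points $v_c$) already used in the proof of Proposition \ref{thin} — and then I distinguish two cases. If $\bar\sigma$ is eventually an efficient quasi-geodesic ray that is not trapped near any $v_c$ (equivalently, has only finitely many wide $c$'s, or its wide set stabilizes), it represents a point of $\partial{\cal E\cal G}$, and by Lemma \ref{Lipschitz} the enlargement of $\bar\sigma$ has bounded Hausdorff distance from $\sigma$, so $\sigma$ is in the image. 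If instead $\bar\sigma$ eventually enters some $v_c$ and the corresponding intrinsic distance in $H_c$ is unbounded, then $\sigma$ is eventually Hausdorff-close in ${\cal G}$ to a geodesic ray in $H_c$ (again by Lemma \ref{Lipschitz} applied to the enlargement), hence represents a point of $\partial H_c$. One must also rule out the pathological possibility that $\bar\sigma$ bounces between infinitely many distinct subgraphs with unbounded penetration while $\sigma$ itself remains a genuine ray — this is prevented because each deep penetration of $H_c$ forces the enlargement (hence $\sigma$, up to bounded error) to traverse a long geodesic segment inside $H_c$, and the bounded penetration property forces the order and near-uniqueness of these segments, so the ray $\sigma$ must "commit" to one of the three behaviors.

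Finally I would check that the map and its inverse are continuous, i.e. that the abstract neighborhood bases $B_\epsilon(\xi)$ match the Gromov-metric neighborhood bases of $\partial_\infty{\cal G}$ under the identification. This is where the delicate $p(L)$-neighborhood conditions in the definition of ${\cal C}_\xi(\epsilon)$ and $\hat B_\epsilon$ earn their keep: a point $\zeta\in\partial H_c$ is $\delta_{\cal G}$-close to $\xi\in\partial{\cal E\cal G}$ precisely when a quasi-geodesic from the basepoint to $\zeta$ shares a long initial segment with one to $\xi$, which in ${\cal E\cal G}$ means that segment passes near $v_c$ with $c$ wide — this is exactly membership of $c$ in ${\cal C}_\xi(\epsilon)$; and symmetrically for $\xi\in\partial H_c$ the set $\hat B_\epsilon$ records which boundary points of ${\cal E\cal G}$ are reached by quasi-geodesics lingering deep in $H_c$ along $\gamma$. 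The translation between "$L$-quasi-geodesic in ${\cal E\cal G}$" statements and "geodesic in ${\cal G}$" statements is supplied uniformly by Lemma \ref{Lipschitz} and Corollary \ref{secondlevel}, and the consistency of the two families of bases under change of basepoint has already been asserted. The main obstacle I expect is precisely this last continuity verification in both directions — in particular showing that the abstract basis is \emph{not finer} than the Gromov basis, i.e. that every $\delta_{\cal G}$-ball around $\xi$ contains some $B_\epsilon(\xi)$, which requires knowing that a quasi-geodesic in ${\cal G}$ staying close to $\xi$ for a long time must, after enlargement-surgery, correspond to a quasi-geodesic in ${\cal E\cal G}$ either staying $\delta_{\cal E}$-close to $\xi$ or penetrating deeply along the relevant $H_c$, with the transition governed quantitatively by $p(L)$. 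Everything else is bookkeeping with the bounded penetration property and the two corollaries.
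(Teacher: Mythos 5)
Your proposal is correct and follows essentially the same route as the paper: both arguments rest on the dichotomy between a ray whose image in ${\cal E\cal G}$ has unbounded diameter (giving a point of $\partial{\cal E\cal G}$) and one that is eventually trapped near a single $H_c$ (giving a point of $\partial H_c$), mediated by enlargements, Lemma \ref{Lipschitz}, Corollary \ref{secondlevel} and the quasi-convexity of the $H_c$. The only difference is cosmetic — you build the map from the abstract boundary into the Gromov boundary while the paper constructs the inverse map $\Lambda$, and you spell out the topology verification that the paper declares immediate.
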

\begin{proof} For a number $L>1$ define an
\emph{unparametrized $L$-quasi-geodesic} in 
the graph ${\cal E\cal G}$ 
to be a path $\eta:[0,\infty)\to 
{\cal E\cal G}$ with the following property. 
There is some $n\in (0,\infty]$, and there is an increasing
homeomorphism  $\rho:[0,n)\to [0,\infty)$ such that
$\eta\circ \rho$ is an $L$-quasi-geodesic in ${\cal E\cal G}$.

Let $x\in \zeta$ and 
let $p>1$ be sufficiently large that $x$ can be connected to every
point in the Gromov boundary of ${\cal G}$
by a $p$-quasi-geodesic ray in ${\cal G}$. 
Let $\gamma:[0,\infty)\to {\cal G}$ 
be such a simplicial $p$-quasi-geodesic ray. We claim that 
there is a number $p^\prime >1$ such that $\gamma$ viewed
as a path in ${\cal E\cal G}$ is an unparametrized 
$p^\prime$-quasi-geodesic
in ${\cal E\cal G}$.

Namely, for each $i>0$ let $\zeta_i$ be an 
enlargement of a 
geodesic in  
${\cal E\cal G}$ with endpoints $\gamma(0),\gamma(i)$.
Then there is a number $L>1$ such that $\zeta_i$ is an 
$L$-quasi-geodesic in ${\cal G}$.
By hyperbolicity,
the Hausdorff distance in 
${\cal G}$ between $\gamma[0,i]$ and 
the image of $\zeta_i$ is uniformly bounded, Then  
the same holds true if this Hausdorff distance is measured 
with respect to the distance in 
${\cal E\cal G}\supset {\cal G}$. 
Thus the Hausdorff distance in ${\cal E\cal G}$ between
$\gamma[0,i]$ and a geodesic with the same endpoints is uniformly bounded.
Since $i>0$ was arbitrary, 
this implies that $\gamma$ is an unparametrized
$p^\prime$-quasi-geodesic in ${\cal E\cal G}$ for a number
$p^\prime>0$ only depending on $p$.

As a consequence, if the diameter of $\gamma[0,\infty)$ in 
${\cal E\cal G}$ is infinite then 
up to para\-met\-ri\-za\-tion, $\gamma[0,\infty)$ is
a $p^\prime$-quasi-geodesic ray in ${\cal E\cal G}$
and hence it 
converges as $i\to \infty$
to a point 
$\xi\in \partial {\cal G}$.

Now assume that the diameter of $\gamma[0,\infty)$ in 
${\cal E\cal G}$ is finite.
By Corollary \ref{quasiconvex}, there is a number
$M>0$ not depending on $c$ with the following properties.
\begin{enumerate}
\item If $x,y\in {\cal G}$ are any two vertices and
if $c\in {\cal C}$ is such that
the distance in $H_c$ of some shortest distance
projection of $x,y$ into $H_c$ is at least $M$ then 
a geodesic connecting $x$ to $y$ in ${\cal E\cal G}$
passes through the special vertex $v_c$ defined by $c$.
\item Let $\gamma:[0,\ell]\to {\cal G}$ be a
$p$-quasi-geodesic. If there is some $k\leq \ell$ and some 
$c\in {\cal C}$ such that the
distance in $H_c$ of some shortest distance projection of 
$\gamma(0),\gamma(k)$ into $H_c$ is at least $2M$ then
for each $\ell\geq k$  the distance in $H_c$ 
of a shortest distance projection of 
$\gamma(0),\gamma(\ell)$ into $H_c$ is at least $M$.
\end{enumerate}

For $k>0$ let ${\cal C}_1(k)$ (or ${\cal C}_2(k)$)
be the set of all $c\in {\cal C}$ so that 
the distance in $H_c$ between a shortest distance projection
of $\gamma(0),\gamma(k)$ into $H_c$ is at least
$M$ (or $2M$). By property (2) above, 
for $\ell\geq k$ we have
${\cal C}_2(\ell)\subset {\cal C}_1(k)$.  

The diameter of the image of any simplicial 
geodesic in ${\cal E\cal G}$ equals the length
of the geodesic and hence it is bounded
from below by the number of special vertices it passes through.
Since the diameter of $\gamma[0,\infty)$ in 
${\cal E\cal G}$ is finite by assumption, by property (1) 
the cardinality of ${\cal C}_1(k)$ is bounded 
independent of $k$. 

By property (2) above, we deduce that 
there is some $k_0>0$ so that
$C_2(\ell)\subset C_1(k_0)$ for all $\ell\geq k_0$.
Since the diameter of $\gamma[k_0,\infty)$ in 
${\cal E\cal G}$
is finite, it follows that there is some 
$c\in {\cal C}$ so that  
$\gamma[k_0,\infty)$ is contained in a uniformly bounded
neighborhood of $H_c$. Now $\gamma$ is a $p$-quasi-geodesic 
in ${\cal E\cal G}$ and the embedding 
$H_c\to{\cal E\cal G}$ is a quasi-isometry.
Thus by hyperbolicity, there is a quasi-geodesic 
$\zeta$ in $H_c$ whose Hausdorff distance to 
$\gamma[k_0,\infty)$ is bounded. From
Corollary \ref{secondlevel} we conclude
that $\gamma$ converges as $j\to \infty$ 
to some $\mu\in \partial H_c$.

To summarize, 
there is a map $\Lambda$ from the Gromov boundary of ${\cal G}$
into $\partial {\cal G}$. Corollary \ref{secondlevel} and 
the above discussion shows that $\Lambda$ is a bijection.
The description of the topology on the boundary of ${\cal G}$
as the topology described above for 
$\partial {\cal G}$ 
is now immediate from the description of the Gromov boundary
of a hyperbolic metric graph.
\end{proof}

\section{Thick subsurfaces }

In this section we consider a handlebody 
$H$ of genus $g\geq 2$. 
By a \emph{disc} in $H$
we mean an essential disc in $H$. 

Two discs $D_1,D_2\subset H$ are in 
\emph{normal position} if their boundary circles
intersect in the minimal number of points and if 
every component of $D_1\cap
D_2$ is an embedded arc
in $D_1\cap D_2$ 
with endpoints in $\partial D_1
\cap \partial D_2$.
In the sequel we always assume that discs are in normal position;
this can be achieved by modifying one of the two discs with an
isotopy.

As in the introduction,
call a connected essential subsurface $X$ of $\partial H$ 
\emph{thick} if the following conditions are satisfied.
\begin{enumerate}
\item Every disc intersects $X$. 
\item $X$ is filled by boundaries of discs.
\end{enumerate}
An example of a thick
subsurface is the complement in $\partial H$ of a 
suitably chosen simple
closed curve which is not discbounding. The 
entire boundary surface $\partial H$ is thick as well.

For a thick subsurface $X$ of $\partial H$  
define ${\cal E\cal D\cal G}(X)$ to be the
graph whose vertices are discs with boundary contained in  $X$.
By property (1) in the definition of a thick subsurface,
the boundary of each such vertex is an essential simple closed
curve in $X$. Two such discs $D,E$ are connected by an edge of length one
if and only if 
there is an essential
simple closed curve $\gamma$ in $X$ which 
can be realized disjointly from both $D,E$
(e.g. the boundary of $D$ if the discs $D,E$ are disjoint).

Denote by $d_{{\cal E},X}$ 
the distance in ${\cal E\cal D\cal G}(X)$.
The disc graph ${\cal D\cal G}(X)$ of $X$ is defined in the
obvious way, and we denote by 
$d_{{\cal D},X}$ its distance function.

In the sequel we always assume that all curves and multicurves
on $X\subset \partial H$ are essential. 
For two simple closed multicurves $c,d$ on $\partial H$ let 
$\iota(c,d)$ be the geometric 
intersection number between $c,d$. 

The following lemma \cite{MM04} implies that 
for every thick subsurface $X$ of $\partial H$ the
graph ${\cal D\cal G}(X)$ is connected.
For its proof and later use, let $D,E$ be discs in minimal position.
Define an \emph{outer component}
of $E$ with respect to $D$ 
to be a component $\hat E$ of $E-D$ which is a disc whose boundary
consists of a single subarc of $\partial E$ and a single subarc
$\alpha$ of $D$. The arc $\alpha$ intersects the boundary of
$D$ precisely at its endpoints.
Surgery of $D$ at this outer component $\hat E$ 
replaces $D$ by the union of $\hat E$ with one of the 
two components
of $D-\alpha$ (compare e.g. \cite{MM04,H11}).

\begin{lemma}\label{fellowtravel}
Let $X\subset \partial H$ be a thick subsurface. 
Let $D,E\subset H$ be 
discs with boundaries in $X$. Then $D$
can be connected to a disc $E^\prime$ which is disjoint from 
$E$ by at most $\iota(\partial D,\partial E)/2$ simple surgeries.
In particular, 
\[d_{{\cal D},X}(D,E)\leq \iota(\partial D,\partial E)/2+1.\]
\end{lemma}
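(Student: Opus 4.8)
The plan is to induct on the intersection number $\iota(\partial D,\partial E)$, using the surgery operation described just before the lemma statement. If $\iota(\partial D,\partial E)=0$ then $D$ and $E$ are already disjoint, so $E'=D$ works with zero surgeries and the distance bound $d_{{\cal D},X}(D,E)\le 1$ is immediate (in fact $D=E$ or they are adjacent). So assume $\iota(\partial D,\partial E)>0$. The key classical fact I would invoke is that since $D\cap E$ is a nonempty union of embedded arcs (normal position), there exists at least one outer component $\hat E$ of $E$ with respect to $D$: take an arc of $D\cap E$ that is outermost in $E$, i.e. cuts off from $E$ a sub-disc $\hat E$ whose boundary is a single subarc of $\partial E$ together with a single subarc $\alpha\subset D$, with $\alpha\cap\partial D$ equal to its two endpoints.

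Next I would perform surgery of $D$ along this outer component. This replaces $D$ by a disc $D'$ which is the union of $\hat E$ with one of the two components of $D\setminus\alpha$. Two points need to be checked here. First, $D'$ is essential and has boundary contained in $X$: its boundary is built from arcs of $\partial D\subset X$ and arcs of $\partial E\subset X$ pushed slightly off, so it lies in $X$; essentiality follows from the standard incompressibility argument (at least one of the two surgery choices yields an essential disc — one picks that one). Second, and this is the crucial counting step, $D$ and $D'$ are disjointly realizable (or can be isotoped to be disjoint), so they are joined by an edge in ${\cal D\cal G}(X)$, and moreover $\iota(\partial D',\partial E)\le\iota(\partial D,\partial E)-2$: the boundary $\partial D'$ runs along $\hat E\subset E$ and along part of $\partial D$, and the surgery removes the outermost intersection arc together with at least one of its two endpoints' contributions — more carefully, each surgery along an outer component strictly decreases the number of intersection arcs, and since $\partial D'$ can be isotoped off the portion of $\partial E$ bounding $\hat E$, it eliminates at least one full intersection point on each side, i.e. at least two from $\iota(\partial D,\partial E)$. (This "$\ge 2$" bookkeeping is exactly the content of the factor $\iota/2$.)

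Then the induction closes: by the inductive hypothesis, $D'$ can be connected to a disc $E'$ disjoint from $E$ by at most $\iota(\partial D',\partial E)/2\le \iota(\partial D,\partial E)/2-1$ simple surgeries, so $D$ reaches $E'$ in at most $\iota(\partial D,\partial E)/2$ surgeries as claimed. For the distance estimate, each surgery step gives an edge in ${\cal D\cal G}(X)$ between consecutive discs (consecutive surgered discs are disjoint), yielding a path of length at most $\iota(\partial D,\partial E)/2$ from $D$ to $E'$, and then one more edge from $E'$ to $E$ since they are disjoint; hence $d_{{\cal D},X}(D,E)\le\iota(\partial D,\partial E)/2+1$.

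The main obstacle I anticipate is the essentiality bookkeeping in the surgery step: a priori one of the two components of $D\setminus\alpha$ glued to $\hat E$ might be inessential (boundary-parallel or trivial), so one must argue that at least one choice is essential, and that the intersection count genuinely drops by at least two rather than just one — the latter relies on the outermost arc being removed along with a definite "bigon-free" reduction, using that the discs were in minimal (normal) position to begin with, so no wasted intersections are created. Everything else is routine and is already implicit in the cited references \cite{MM04,H11}.
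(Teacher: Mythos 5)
Your proposal is correct and follows essentially the same route as the paper: induct on $\iota(\partial D,\partial E)$, surgery $D$ at an outer component of $E-D$ to get a disc $D'$ disjoint from $D$ with $\iota(\partial D',\partial E)\leq \iota(\partial D,\partial E)-2$, and iterate. The essentiality and intersection-count bookkeeping you flag as the main obstacle is exactly what the paper also relies on (citing \cite{MM04,H11}), so there is no substantive difference.
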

\begin{proof} Let $D,E$ be two discs in normal position
with boundary in $X$.
Assume that 
$D,E$ are not disjoint. Then 
there is an outer component of $E-D$. 
A disc $D^\prime$  obtained by simple surgery
of $D$ at this component is essential in $\partial H$ 
and its boundary is contained in $X$, i.e. 
$D^\prime\in {\cal E\cal D\cal G}(X)$. 
Moreover, 
$D^\prime$ 
is disjoint from $D$, i.e. we have 
$d_{{\cal D},X}(D^\prime,D)=1$,  and 
\begin{equation}\label{intersection}  
\iota(\partial E, \partial D^\prime)\leq 
\iota(\partial D,\partial E)-2.\end{equation}
The lemma now follows by induction on 
$\iota(\partial D,\partial E)$.
\end{proof}

\begin{remark}\label{fourhole}
Lemma \ref{fellowtravel} implies that
a thick subsurface
$X$ of $\partial H$  
can not be a four-holed sphere or a one-holed torus.
Namely, if $X$ is a four-holed sphere or a one-holed
torus and if 
$X$ contains the boundaries of two 
distinct discs $D,E$ then these discs
intersect. Surgery of $D$ at an
outer component of
$E-D$ results in an essential disc $D^\prime$ 
which up to homotopy is disjoint from the disc $D$ and 
whose boundary is contained
in $X$.  
Since any two essential simple closed curves
in $X$ intersect, 
the boundary of $D^\prime$ is peripheral in $X$
which violates the assumption that no boundary component
of $X$ is discbounding.
\end{remark}

A simple closed multicurve $\gamma$ in 
a thick subsurface $X$ of 
$\partial H$ is called
\emph{discbusting} if $\gamma$ intersects 
every disc with boundary in $X$.

Consider an oriented $I$-bundle 
${\cal J}(F)$ over a compact (not 
necessarily orientable) surface $F$
with (not necessarily connected) boundary $\partial F$.
The boundary $\partial {\cal J}(F)$ 
of ${\cal J}(F)$ 
decomposes into 
the \emph{horizontal boundary} 
and the \emph{vertical boundary}. 
The  vertical boundary is the interior of 
the restriction of the $I$-bundle
to $\partial F$ and consists of a collection of 
pairwise disjoint open incompressible annuli.
The horizontal boundary is the complement of 
the vertical boundary in $\partial {\cal J}(F)$.

For a given boundary component $\alpha$ of $F$,
the union of the horizontal boundary of
${\cal J}(F)$ 
with the $I$-bundle over $\alpha$
is a compact connected orientable surface 
$F_\alpha\subset \partial {\cal J}(F)$. 
The boundary of $F_\alpha$ is empty if and only if the 
boundary of $F$ is connected.
If the boundary of $F$ is not connected
then $F_\alpha$ is properly contained in the
boundary $\partial {\cal J}(F)$ of 
${\cal J}(F)$. The complement 
$\partial {\cal J}(F)-F_\alpha$ is a union of 
incompressible annuli.

\begin{definition}\label{discbustingibundle}
An \emph{$I$-bundle generator} in 
a thick subsurface $X\subset \partial H$
is an essential simple
closed curve $\gamma\subset X$
with the following property.
There is a compact surface $F$ with non-empty boundary
$\partial F$, there is a boundary component $\alpha$ of 
$\partial F$, and there is an orientation preserving
embedding $\Psi$ of the oriented $I$-bundle
${\cal J}(F)$ over $F$ into $H$ which 
maps $\alpha$ to $\gamma$ and which maps $F_\alpha$
onto the complement in $X$ of a tubular neighborhood 
of the boundary $\partial X$ of $X$.
\end{definition}

We call the surface $F$ the \emph{base} of the $I$-bundle generated by $\gamma$.

\begin{example}
{\bf 1)} An orientable $I$-bundle over an orientable base is 
a trivial bundle. Thus if 
$\partial H$ admits an $I$-bundle 
generator $\gamma$ with orientable base surface $F$ then the genus $g$ of 
$\partial H$ is even and equals twice the genus of 
$F$. 
The $I$-bundle over every essential arc in $F$
with endpoints in $\partial F$ is an embedded disc in $H$.
There is an orientation reversing involution
$\Phi:H\to H$ whose fixed point set intersects $\partial H$
precisely in $\gamma$.  This involution acts as
a reflection in the fiber. The union of any essential
arc $\beta$ in $F$ with endpoints in $\partial F$ with its image 
under $\Phi$ is the boundary of a disc in $H$ (there is a small 
abuse of notation here since the fixed point set of $\Phi$
intersects $\partial H$ in a subset of the fibre over $\partial F$).
This disc is just the $I$-bundle over the arc $\beta$.\\
{\bf 2)}
Let $F$ be an oriented surface of 
genus $k\geq 1$ 
with two boundary components. The oriented $I$-bundle 
${\cal J}(F)=F\times [0,1]$ over $F$
is homeomorphic to a handlebody $H$ of genus $2k+1$. 
A boundary component  
$\beta$ of $F$ is neither discbounding nor 
discbusting in $H$,
and the subsurface $X=\partial H-\beta\subset
\partial H$ is thick. The second boundary 
component $\gamma$ of $F$ 
intersects every disc with boundary in $X$
and is an 
$I$-bundle generator for 
$X$ whose base is the surface $F$.
The image of $F\times [0,1]={\cal J}(F)$ under the 
embedding ${\cal J}(F)\to H$ is the complement of a 
neighborhood of $\beta$ in $H$ which is homeomorphic
to a solid torus.\\
{\bf 3)} Let $F$ be the connected sum of $g$ copies of 
the real projective plane with a disc.
The orientable $I$-bundle 
over $F$ is a handlebody $H$ of genus $g$. 
The vertical boundary of the $I$-bundle is 
an annulus whose core curve $\gamma$ is non-separating. 
The complement of the annulus is the two-sheeted orientation
cover of $F$.
The $I$-bundle over any simple arc in $F$ with both endpoints
on the boundary of $F$ is an embedded disc in $H$.\\
{\bf 4)} Let $\gamma$ be a non-separating $I$-bundle generator for
a proper thick subsurface $X$ of $\partial H$, with 
base $F$. Then $F$ is non-orientable. 
Up to isotopy, the thick subsurface $X$ of 
$\partial H$ is the intersection of 
the boundary $\partial {\cal J}(F)$ 
of the bundle ${\cal J}(F)\subset H$ 
with $\partial H$. It can be obtained
from the orientation cover $\hat F$ of $F$ by
glueing an annulus to the two 
preimages of the preferred boundary component  $\alpha$ of $F$
with a homeomorphism which reverses the boundary orientations. 
The $I$-bundle over every essential arc $\beta$ in $F$ with 
endpoints on $\alpha$ is a disc in $H$.
Its boundary is the preimage of $\beta$ in 
$F_\alpha\subset \partial {\cal J}(F)$, viewed as the orientation
cover of $F$
(here we use the same small abuse of 
terminology as before).
\end{example}

For a thick subsurface $X$ of $\partial H$ 
let ${\cal S\cal D\cal G}(X)$ be the graph
whose vertices are discs with boundaries 
contained in $X$ 
and where two such discs $D,E$ are connected by an edge
of length one if one of the following two
possibilities is satisfied.
\begin{enumerate}
\item There is an
essential simple closed curve $\alpha\subset  X$ 
(i.e. which is essential as a curve in the 
subsurface $X$
of $\partial H$) which is disjoint from $D\cup E$
(for example, $\partial D$ if $D,E$ are disjoint).
\item There is an $I$-bundle generator $\gamma\subset X$
which intersects
both $D,E$ in precisely two points.
\end{enumerate}
We denote by $d_{\cal S,X}$ the distance in ${\cal S\cal D\cal G}(X)$.
If $X=\partial H$ then we simply write $d_{\cal S}$ instead
of $d_{{\cal S},\partial H}$.

The following was proved in \cite{H11} in the
case $X=\partial H$. The proof of the result carries
over to an arbitrary thick subsurface without modification.

\begin{proposition}\label{distanceinter2}
Let $X\subset \partial H$ be a thick subsurface.
The vertex inclusion defines a quasi-isometric
embedding of ${\cal S\cal D\cal G}(X)$ into 
the curve graph of $X$. In particular, 
${\cal S\cal D\cal G}(X)$ is $\delta$-hyperbolic
for a number $\delta >0$ only depending on the genus of $H$. 
\end{proposition}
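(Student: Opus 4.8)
The plan is to mimic the argument from \cite{H11} that handles the case $X=\partial H$, verifying at each step that nothing used the fact that $X$ is the whole boundary surface rather than a thick subsurface. First I would recall the setup: the vertex set of ${\cal S\cal D\cal G}(X)$ consists of discs with boundary in $X$, viewed as essential simple closed curves in $X$, so there is an obvious vertex inclusion $\iota$ into the curve graph ${\cal C}{\cal G}(X)$. Since ${\cal C}{\cal G}(X)$ is $\delta_0$-hyperbolic for a $\delta_0$ depending only on the topological type of $X$ (Masur--Minsky), and $X$ ranges over a finite list of surfaces embeddable in $\partial H$, it suffices to show $\iota$ is a quasi-isometric embedding with uniform constants; hyperbolicity of ${\cal S\cal D\cal G}(X)$ then follows because a quasi-isometrically embedded subset of a hyperbolic space, which is moreover coarsely connected, is hyperbolic.

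The one direction that is essentially free is that $\iota$ is coarsely Lipschitz: an edge of ${\cal S\cal D\cal G}(X)$ of type (1) gives curves $\partial D,\partial E$ each disjoint from a common essential curve $\alpha\subset X$, hence $d_{{\cal C}{\cal G}(X)}(\partial D,\partial E)\le 2$; an edge of type (2) gives an $I$-bundle generator $\gamma\subset X$ with $\iota(\gamma,\partial D)=\iota(\gamma,\partial E)=2$, so again $\partial D$ and $\partial E$ lie within uniformly bounded curve-graph distance of $\gamma$ (intersection number $2$ bounds curve-graph distance). So $d_{{\cal C}{\cal G}(X)}(\iota D,\iota E)\le c_1\,d_{{\cal S},X}(D,E)$ for a universal $c_1$.

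The substantive direction — and the main obstacle — is the reverse inequality $d_{{\cal S},X}(D,E)\le c_2\,d_{{\cal C}{\cal G}(X)}(\partial D,\partial E)+c_2$. Here I would invoke the surgery machinery recalled just before Lemma \ref{fellowtravel}: given discs $D,E$ in normal position with boundary in $X$, perform simple surgery of $D$ along an outer component of $E-D$. The key technical fact from \cite{H11} is that after surgery one can pass, in a bounded number of ${\cal S\cal D\cal G}(X)$-steps, to a disc $D'$ with boundary in $X$ whose intersection number with $E$ has dropped, unless the configuration is ``trapped'' in an $I$-bundle pattern — and that is precisely the situation the type-(2) edges (the $I$-bundle generators) are designed to shortcut. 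Running this surgery process and controlling the number of steps against $\iota(\partial D,\partial E)$, then converting the intersection-number bound into a curve-graph-distance bound via the logarithmic (in fact the relevant coarse) comparison between $d_{{\cal C}{\cal G}(X)}$ and $\log\iota$, one obtains that a ${\cal C}{\cal G}(X)$-geodesic from $\partial D$ to $\partial E$ can be ``traced'' by a path in ${\cal S\cal D\cal G}(X)$ of comparable length. The only thing to check for the generalization is that every surgery of a disc with boundary in $X$ again produces a disc with boundary in $X$ and that the $I$-bundle generators used are curves in $X$; both are immediate from the definitions (outer surgeries only use arcs on the existing discs, which lie in $X$, and Definition \ref{discbustingibundle} places the generator $\gamma$ and the base surface inside $X$). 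Since the definition of thickness and the normal-position conventions are stated verbatim for arbitrary thick $X$, the proof in \cite{H11} applies with no modification, and the uniformity of all constants over the finitely many possible types of $X$ gives the final clause about $\delta$ depending only on the genus of $H$.
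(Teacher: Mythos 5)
The paper itself offers no argument here: it states that the result was proved in \cite{H11} for $X=\partial H$ and that ``the proof carries over to an arbitrary thick subsurface without modification.'' So your overall strategy --- defer to \cite{H11} and verify that every ingredient is local to $X$ --- is exactly the paper's, and the specific checks you single out (outer surgeries of discs with boundary in $X$ produce discs with boundary in $X$; the $I$-bundle generators and their base surfaces live inside $X$ by Definition \ref{discbustingibundle}; the hyperbolicity constant of ${\cal C\cal G}(X)$ is uniform over the finitely many topological types of thick $X$) are the right ones. Your coarse-Lipschitz direction is also correct as written.

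However, the mechanism you describe for the reverse inequality would not work if taken at face value, so be careful not to present it as the content of \cite{H11}. Counting surgery steps against $\iota(\partial D,\partial E)$ gives $d_{{\cal S},X}(D,E)\leq \iota(\partial D,\partial E)/2+1$ (this is Lemma \ref{fellowtravel}), while the Masur--Minsky comparison goes the other way: $d_{{\cal C\cal G},X}(\partial D,\partial E)\lesssim \log\iota(\partial D,\partial E)$. Combining these only yields $d_{{\cal S},X}(D,E)$ bounded by a quantity that is \emph{exponential} in the curve graph distance, which is far from a quasi-isometric embedding. The actual work in \cite{H11} is to show that a surgery path, viewed in the curve graph, makes definite coarse progress --- equivalently, that it is an unparametrized quasi-geodesic --- and that the configurations where it stalls are precisely the $I$-bundle configurations, which is why the type-(2) edges are built into ${\cal S\cal D\cal G}(X)$. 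You gesture at this dichotomy but then replace it with the intersection-number/logarithm argument, which is the one step of your sketch that genuinely fails. Since you are invoking \cite{H11} as a black box anyway, the honest formulation is that the quasi-geodesic property of surgery paths is the imported fact, and the only new content for general thick $X$ is the locality verification you already carried out.
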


\section{Hyperbolicity of 
the electrified disc graph}

As in Section 3, we consider a handlebody $H$ of genus 
$g\geq 2$, with boundary $\partial H$.  
The goal of this section is to use Theorem \ref{hypback} to show 
hyperbolicity of the electrified disc graph 
${\cal E\cal D\cal G}(X)$ of a thick subsurface $X$
of $\partial H$. 
We also determine the Gromov boundary
of ${\cal E\cal D\cal G}(X)$.

Thus let $X\subset \partial H$ be a thick subsurface. 
Recall that $X$ is connected, and 
by the remark after Lemma \ref{fellowtravel}, $X$ is 
distinct from a sphere with at most four holes and
from a torus with a single hole. 
Denote by
$d_{{\cal C\cal G},X}$ the distance in the curve graph 
${\cal C\cal G}$ of $X$,
by $d_{{\cal S},X}$ the distance in the graph
${\cal S\cal D\cal G}(X)$
and by $d_{{\cal E},X}$ the distance in the electrified disc graph
${\cal E\cal D\cal G}(X)$ of $X$.

If $X$ does not contain any $I$-bundle generator then
${\cal E\cal D\cal G}(X)={\cal S\cal D\cal G}(X)$ and there is nothing
to show. Thus assume that there is an $I$-bundle generator 
$\gamma\subset X$. Let ${\cal E}(\gamma)\subset
{\cal E\cal D\cal G}(X)$ be the complete subgraph of 
${\cal E\cal D\cal G}(X)$ 
whose
vertices are discs intersecting $\gamma$ in precisely two points.
Define  
${\cal E}=\{{\cal E}(\gamma)\mid \gamma\}$ where 
$\gamma$ runs through all $I$-bundle generators in $X$.
By definition, ${\cal S\cal D\cal G}(X)$ is $2$-quasi-isometric to 
the ${\cal E}$-electrification of ${\cal E\cal D\cal G}(X)$.
Thus by Theorem \ref{hypextension}, 
to show hyperbolicity of ${\cal E\cal D\cal G}(X)$ it
suffices to show that each of the
graphs ${\cal E}(\gamma)$ is $\delta$-hyperbolic for a universal
number $\delta >0$ and that the bounded penetration property
holds true.

We begin with establishing hyperbolicity of the graphs
${\cal E}(\gamma)$.
To this end, for a compact (not necessarily orientable)
surface $F$ with boundary $\partial F$ 
and for a fixed boundary component $\alpha$ of $F$, define  
the \emph{electrified arc graph} $C^\prime(F,\alpha)$ as follows.
Vertices of $C^\prime(F,\alpha)$ are essential 
embedded arcs in $F$ with both
endpoints in $\alpha$. Two such arcs are connected by an edge of length
one if either they are disjoint or if they are disjoint from a common
essential simple closed curve.
If $F$ is non-orientable, then we require that
an essential simple closed curve does not bound a Moebius band in $F$.

The following statement is well known but hard
to find in the literature. We give a proof for completeness.

\begin{lemma}\label{model}
Let $F$ be a compact surface with boundary $\partial F$. Assume that
$F$ is not a sphere with 
at most three holes or a projective plane with at most three holes.
Let $\alpha$ be a boundary circle of $F$. 
Then $C^\prime(F,\alpha)$ 
is $4$-quasi-isometric to the curve graph of $F$.
\end{lemma}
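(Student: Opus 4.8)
The plan is to exhibit an explicit coarse map in each direction between $C^\prime(F,\alpha)$ and the curve graph $\mathcal{C\cal G}(F)$ of $F$ and check that both are coarsely Lipschitz and coarsely inverse to each other, keeping track of constants so that the resulting quasi-isometry constant is $4$. In one direction I would use the standard ``boundary of a regular neighborhood'' construction: to an essential arc $a$ with endpoints on $\alpha$ assign the curve $\partial N(a\cup\alpha)$, more precisely the essential components of the boundary of a regular neighborhood of $a\cup\alpha$ in $F$; since this surgery is well defined up to isotopy and bounded choices, it sends a vertex of $C^\prime(F,\alpha)$ to a uniformly bounded set of vertices of $\mathcal{C\cal G}(F)$. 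In the other direction, to a simple closed curve $c$ in $F$ assign an arc $a$ with endpoints on $\alpha$ that is disjoint from $c$ whenever such an arc exists (it does, once $F$ is not one of the small exceptional surfaces excluded in the hypothesis, because $\alpha$ together with the complement of $c$ still carries an essential arc); this is again well defined up to bounded ambiguity.

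The key steps, in order, are: (i) verify the hypothesis on $F$ guarantees that $C^\prime(F,\alpha)$ is non-empty and connected, and that for every essential simple closed curve $c$ there is an essential arc with endpoints on $\alpha$ disjoint from $c$ — this is where the exclusion of a sphere or projective plane with at most three holes is used, via an Euler-characteristic count on the surface cut along $c$; (ii) show the neighborhood-boundary map $\Phi\colon C^\prime(F,\alpha)\to\mathcal{C\cal G}(F)$ is $1$-Lipschitz up to an additive error by checking that if two arcs are joined by an edge of $C^\prime(F,\alpha)$ — i.e. disjoint, or co-disjoint from a common essential simple closed curve $\beta$ — then their images have bounded distance in $\mathcal{C\cal G}(F)$ (in the co-disjoint case $\beta$ itself, or a bounded modification of it, is close to both images); (iii) show the reverse map $\Psi\colon\mathcal{C\cal G}(F)\to C^\prime(F,\alpha)$ is coarsely Lipschitz, using that disjoint curves $c,c'$ admit respective disjoint arcs $a,a'$ which are then co-disjoint from, say, $c$ itself, hence adjacent in $C^\prime(F,\alpha)$; (iv) check $\Phi\circ\Psi$ and $\Psi\circ\Phi$ move points a bounded amount — for $\Psi\circ\Phi$ one notes an arc $a$ is disjoint from $\partial N(a\cup\alpha)$, hence disjoint from $\Phi(a)$, so $a$ is a valid choice for $\Psi(\Phi(a))$, and co-surjectivity of $\Phi$ follows from step (i); (v) assemble these estimates and optimise the constants to land at $4$.

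I expect the main obstacle to be step (i), or more precisely the bookkeeping needed to make steps (ii)--(iii) genuinely give the sharp constant $4$ rather than some unspecified large constant. The topological content — that one can always find an arc on $\alpha$ disjoint from a prescribed curve, and conversely — is elementary but requires a careful case analysis over whether $c$ is separating or non-separating, whether $F$ is orientable, and whether cutting produces a disc or an annulus component, which is exactly the place the non-orientable Moebius-band caveat in the definition of $C^\prime(F,\alpha)$ has to be invoked. Getting from ``quasi-isometric'' to ``$4$-quasi-isometric'' then amounts to observing that every edge relation on either side can be realised by a configuration in which the witnessing curve or arc is simultaneously disjoint from both images, so that $\Phi$ and $\Psi$ only ever contract or expand distances by the additive slack coming from the ``bounded set of choices'', and tallying that slack carefully; this is routine but is where the stated constant must be earned. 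The exceptional-surface hypothesis and the definition of the electrified arc graph (allowing the co-disjointness edge, not just the disjointness edge) are precisely what make the construction robust enough for this to work.
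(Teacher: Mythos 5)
Your overall strategy (two explicit coarse maps: arc $\mapsto$ essential boundary components of a regular neighborhood of $a\cup\alpha$, and curve $\mapsto$ a disjoint arc based at $\alpha$) is the standard direct route and differs from the paper's, which factors the comparison through the arc-and-curve graph ${\cal A}(F,\alpha)$: there one shows that the vertex inclusion $C^\prime(F,\alpha)\to{\cal A}(F,\alpha)$ at most doubles path lengths (by inserting, between any two consecutive disjoint curves on a path, an arc based at $\alpha$ disjoint from both, after which every curve on the path is flanked by two arcs co-disjoint from it) and then quotes the well-known quasi-isometry between ${\cal A}(F,\alpha)$ and the curve graph. For surfaces whose curve graph has disjointness as its edge relation, your outline is essentially sound and your steps (i)--(iv) go through.

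The genuine gap is that your argument breaks on exactly the low-complexity surfaces that the hypothesis does \emph{not} exclude: the one-holed torus, the one-holed Klein bottle, the four-holed sphere and the four-holed projective plane (all of which occur as bases of $I$-bundles in the application). For these, edges of the curve graph are defined by minimal intersection (one or two points), not by disjointness, since no two distinct essential simple closed curves are disjoint; your step (iii), which sends an edge $c,c^\prime$ of the curve graph to a pair of arcs co-disjoint from $c$, has no content there, and the co-disjointness edges of $C^\prime(F,\alpha)$ are likewise unavailable because no essential curve is disjoint from two distinct essential arcs. The paper handles these cases separately: cutting such an $F$ along an essential curve $\beta$ leaves a unique essential arc $\Lambda(\beta)$ based at $\alpha$, and $\beta\mapsto\Lambda(\beta)$ is an isometry of the curve graph onto $C^\prime(F,\alpha)$, which in this situation is just the arc graph. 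You would need to add this case analysis before your estimates close up; once it is in place, the constant $4$ comes from composing two factor-$2$ comparisons rather than from the delicate optimisation you anticipate in step (v).
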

\begin{proof} Define the \emph{arc and curve graph} ${\cal A}(F,\alpha)$ 
of $F$ to be the graph whose vertices are arcs with endpoints on 
$\alpha$ or essential simple closed curves in $F$.
Two such arcs or curves are connected by an edge
of length one if they can be realized disjointly.

Consider first the case that $F$ either is a one-holed torus,
a one-holed Klein bottle, a 
four holed sphere or 
a four-holed projective plane. 
In this case two simple closed curves in $F$ are connected
by an edge in the curve graph of $F$ if they intersect
in the minimal number of points (one or two). Let $\beta$
be an essential simple closed curve in $F$. 
Cutting $F$ open along $\beta$ yields a three-holed sphere
(if $F$ is a one-holed torus or a one-holed Klein bottle), 
the disjoint union of two three holed spheres (if $F$ is a four-holed sphere)
or the disjoint 
union of a three holed sphere and a three holed projective plane
(if $F$ is a four-holed projective plane). 

Thus there is a unique essential 
arc $\Lambda(\beta)\subset F$ with endpoints on $\alpha$
which is disjoint from $\beta$.
The distance between two essential simple closed curves 
$\beta,\gamma$ in the curve graph of $F$
equals one if and only if the arcs 
$\Lambda(\beta),\Lambda(\gamma)$ are disjoint.
This means that the map $\Lambda$ which 
associates to a simple closed
curve $\beta$ in $F$ the unique arc $\Lambda(\beta)$
with endpoints on $\alpha$  
which is disjoint from $\beta$ defines an isometry of 
the curve graph of $F$ onto 
the \emph{arc graph} of $(F,\alpha)$. This arc graph is
the complete subgraph of ${\cal A}(F,\alpha)$ whose vertex set consists of 
arcs with endpoints on $\alpha$. 
Moreover, in the special case at hand, this arc graph is just the
graph $C^\prime(F,\alpha)$. This yields the statement of the lemma
for one-holed tori, one-holed projective planes,
four holed spheres and four-holed 
projective planes.

Now assume that the surface $F$ is such that two 
vertices in the curve graph of $F$ are connected
by an edge if they can be realized disjointly.
Then for any two disjoint
essential simple closed curves $\beta,\gamma$ in $F$
there is an essential arc  
with endpoints on $\alpha$ which
is disjoint from both $\beta,\gamma$. 
In particular, for every 
simplicial path $c$ in 
the arc and curve graph ${\cal A}(F,\alpha)$ connecting
two vertices in ${\cal A}(F,\alpha)$ which are arcs 
with endpoints on $\alpha$, 
there is a 
path of at most double length in $C^\prime(F,\alpha)$ 
connecting the same endpoints.
This path can be obtained from $c$ as follows.
If $c(i),c(i+1)$ are both simple closed curves
then replace $c[i,i+1]$ 
by a simplicial path in ${\cal A}(F,\alpha)$ of length 2 
with the same endpoints whose midpoint is 
an arc disjoint from $c(i),c(i+1)$.
In the resulting path, a simple closed curve $\beta\subset F$ is adjacent to
two arcs disjoint from $\beta$ and hence we can view
this path as a path in $C^\prime(F,\alpha)$.
Thus the vertex inclusion $C^\prime(F,\alpha)\to 
{\cal A}(F,\alpha)$ is a quasi-isometry.  

We are left with showing that ${\cal A}(F,\rho)$ is quasi-isometric
to the curve graph of $F$. 
However, this is well known, and the proof will be omitted.
\end{proof}

A thick subsurface $X$ of $\partial H$ is not
a four-holed sphere. 
Thus if $\gamma$ is a separating 
$I$-bundle generator for $X$ then the base of the 
$I$-bundle either has positive genus or is a sphere with
at least four holes. Similarly, 
if $\gamma$ is a non-separating
$I$-bundle generator for $X$ then we may assume that 
the base $F$ of the $I$-bundle is not a projective plane 
with three holes.
Namely, if $F$ is a projective plane with three holes
and if $\alpha$ is  a distinguished boundary component of $F$ then 
there is up to homotopy 
a unique essential arc $\beta$ in $F$ with boundary
on $\alpha$.
The $I$-bundle over $\beta$ is the unique disc in the oriented 
$I$-bundle over $F$ 
which intersects the curve $\alpha$
in precisely two points.

We use Lemma \ref{model}
to verify hyperbolicity of the subgraphs ${\cal E}(\gamma)$.
For the formulation of the following lemma, for an $I$-bundle
generator $\gamma$ in a thick subsurface $X$ of $\partial H$,
with base surface $F$, denote again by $\gamma$ the distinguished
boundary component of $F$. 
A disc $D\subset H$ 
with boundary $\partial D\subset X$ which intersects $\gamma$ in precisely two 
points is an $I$-bundle over a simple arc 
$\beta \subset F$ with boundary on $\gamma$.
We call $\beta$ the \emph{projection} of $\partial D$ to $F$. 
With these notations we show.

\begin{lemma}\label{arcanddisc}
Let $X\subset \partial H$ be a thick
subsurface and let 
$\gamma$ be an $I$-bundle generator in $X$, 
with base surface $F$. Then the map which 
associates to a disc $D\in {\cal E}(\gamma)$ the projection of 
$\partial D$ to $F$ extends to a  
$2$-quasi-isometry of ${\cal E}(\gamma)$ onto 
the electrified arc graph 
${\cal C}^\prime(F,\gamma)$ of $F$.
\end{lemma}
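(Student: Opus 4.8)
The plan is to show that the vertex map $D\mapsto\beta_D$ is a bijection and that it distorts distances by a multiplicative factor of at most $2$ in each direction. Throughout I use the identification from Section 3 of the thick subsurface $X$ with the subsurface $F_\alpha\subset\partial{\cal J}(F)$, together with the restriction $\pi$ to $F_\alpha$ of the $I$-bundle projection ${\cal J}(F)\to F$; with this identification $\alpha=\gamma$, and for a disc $D\in{\cal E}(\gamma)$ one has $\partial D=\pi^{-1}(\beta_D)\cap X$. That $D\mapsto\beta_D$ is well defined on isotopy classes is clear, since $\beta_D$ is determined by $\partial D$. Surjectivity holds because the $I$-bundle over any essential arc in $F$ with endpoints on $\alpha$ is an embedded essential disc in $H$ with boundary in $X$ meeting $\gamma$ in exactly two points, hence a vertex of ${\cal E}(\gamma)$; and injectivity follows from the fact recalled just before the statement of the lemma, that $D$ is \emph{the} $I$-bundle over its projection, so that isotopic projections yield isotopic discs.

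For the backward inequality $d_{{\cal E}(\gamma)}(D,E)\le d_{{\cal C}^\prime(F,\gamma)}(\beta_D,\beta_E)$ it suffices to treat an edge of ${\cal C}^\prime(F,\gamma)$. If $\beta_D,\beta_E$ are disjoint then the $I$-bundles $D,E$ over them are disjoint discs in ${\cal E}(\gamma)$, hence adjacent there. If instead $\beta_D,\beta_E$ are both disjoint from a common essential simple closed curve $c\subset F$ (which does not bound a Möbius band when $F$ is non-orientable), I would take a component $\tilde c$ of $\pi^{-1}(c)\cap X$: because $c$ is essential and non-peripheral in $F$, does not bound a Möbius band, and because $X$ is orientable, $\tilde c$ is an essential simple closed curve in $X$, and it is disjoint from $\partial D\cup\partial E=\pi^{-1}(\beta_D\cup\beta_E)\cap X$; hence $\tilde c$ witnesses an edge between $D$ and $E$ in ${\cal E\cal D\cal G}(X)$ and therefore in the complete subgraph ${\cal E}(\gamma)$. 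Concatenating along an edge-path gives the inequality.

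The forward inequality $d_{{\cal C}^\prime(F,\gamma)}(\beta_D,\beta_E)\le 2\,d_{{\cal E}(\gamma)}(D,E)$ is the step I expect to be the main obstacle, and again it suffices to treat an edge of ${\cal E}(\gamma)$. So let $\delta\subset X$ be an essential simple closed curve disjoint from $\partial D\cup\partial E$. The crucial observation is that $\delta$ is \emph{not} isotopic to $\gamma$: since $D$ is an $I$-bundle over an essential arc, $\gamma$ meets $\partial D$ essentially in two points, whereas $\delta$ is disjoint from $\partial D$. The image $\pi(\delta)\subset F$ is a closed curve — possibly not embedded, and possibly meeting $\partial F=\alpha$ at the images of $\delta\cap\gamma$ — and it is disjoint, as a set, from $\beta_D\cup\beta_E$, because $\delta$ is disjoint from $\pi^{-1}(\beta_D\cup\beta_E)\cap X=\partial D\cup\partial E$ (including the fibres over the endpoints of $\beta_D,\beta_E$, which lie in $\partial D\cup\partial E$). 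I would then argue, by standard surface topology, that some component of $\pi(\delta)$ cut along $\alpha$ — after removing innermost self-intersection bigons — yields either an embedded essential simple closed curve of $F$ or an embedded essential arc $\beta^{\prime\prime}$ with endpoints on $\alpha=\gamma$, in either case disjoint from $\beta_D\cup\beta_E$; were no such curve or arc available, a regular neighbourhood of $\pi(\delta)$ in $F$ would consist only of discs and of annuli parallel to components of $\partial F$, forcing $\delta$ to be inessential in $X$ or isotopic to $\gamma$, which is impossible. In the first case $\beta_D$ and $\beta_E$ already share an edge of ${\cal C}^\prime(F,\gamma)$; in the second $d_{{\cal C}^\prime(F,\gamma)}(\beta_D,\beta^{\prime\prime})=d_{{\cal C}^\prime(F,\gamma)}(\beta^{\prime\prime},\beta_E)=1$; either way $d_{{\cal C}^\prime(F,\gamma)}(\beta_D,\beta_E)\le 2$. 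Concatenating over a geodesic of ${\cal E}(\gamma)$ and combining with the two previous paragraphs exhibits $D\mapsto\beta_D$ as a $2$-quasi-isometry. The genuinely delicate point is exactly this extraction of an honest essential curve or arc from the possibly badly behaved curve $\pi(\delta)$, together with the bookkeeping of essentiality of $\pi^{-1}(c)$, $\pi^{-1}(\delta)$ and $\pi(\delta)$ in terms of how $X$ sits inside $\partial{\cal J}(F)$.
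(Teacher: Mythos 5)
Your proposal is correct and follows essentially the same route as the paper: the bijection via the $I$-bundle structure, the observation that an edge of $C^\prime(F,\gamma)$ lifts to a curve in $X$ disjoint from both discs (giving the $1$-Lipschitz direction), and, for the reverse direction, projecting a curve $\delta\subset X$ disjoint from $\partial D\cup\partial E$ down to $F$ and extracting from the (possibly non-embedded) image an essential arc with endpoints on $\gamma$ or an essential simple closed curve disjoint from $\beta_D\cup\beta_E$, yielding distance at most $2$. The paper phrases the projection step via the orientation-reversing involution $\Phi$ and the $\Phi$-invariance of $\partial D\cup\partial E$ rather than via the bundle projection $\pi$, but this is the same argument, and your extra care about essentiality of preimage components and about $\delta$ not being isotopic to $\gamma$ only sharpens what the paper leaves implicit.
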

\begin{proof} Let $\gamma$ be an $I$-bundle generator in $X$ and let 
$F$ be the base surface of the 
$I$-bundle generated by $\gamma$. Let $V$ be the oriented
$I$-bundle over $F$ as in the definition of an 
$I$-bundle generator 
and let $\Psi:V\to H$ be a corresponding embedding.
Up to isotopy, we have 
$\Psi(\partial V)\cap \partial H=X$.
 There is an orientation reversing bundle involution $\Phi$ of $V$
 which exchanges the endpoints of the fibres. The 
 involution preserves $X\subset \partial V$
 and the curve $\gamma$.  The quotient of $X$ under this involution 
 equals the base surface $F$ of the
 $I$-bundle. The projection of $\gamma$ is the distinguished boundary
 component of $F$, again denoted by $\gamma$.

Up to isotopy, if the boundary $\partial D$ of a disc 
$D$ in $H$  
is contained in $X$ and 
intersects the curve $\gamma$ in precisely two points 
then $\partial D$ is invariant under
the involution $\Phi$. Thus the map $\Theta:
{\cal V}(C^\prime(F,\gamma))\to {\cal V}({\cal E}(\gamma))$
which associates to an arc $\beta$ in $F$ with endpoints
on $\gamma$ the $I$-bundle over $\beta$ is a bijection.
Here ${\cal V}(C^\prime(F,\gamma))$ (or 
${\cal V}({\cal E}(\gamma))$) is the set of vertices
of $C^\prime(F,\gamma)$ (or ${\cal E}(\gamma)$).

If $\alpha,\beta\in {\cal V}(C^\prime(F,\gamma))$ are connected by an
edge then either $\alpha,\beta$ are disjoint and so are
$\Theta(\alpha),\Theta(\beta)$, or $\alpha,\beta$ are disjoint from an
essential simple closed curve $\rho$ in  $F$ 
and therefore the discs $\Theta(\alpha),
\Theta(\beta)$ are disjoint from $\rho\subset X$. 
Thus $\Theta$ extends to a 1-Lipschitz map
$C^\prime(F,\gamma)\to {\cal E}(\gamma)$.

We are left with showing that $\Theta^{-1}:{\cal V}({\cal E}(\gamma))\to 
{\cal V}(C^\prime(F,\gamma))$ is $2$-Lipschitz where 
${\cal V}({\cal C}(\gamma))$ and 
${\cal V}(C^\prime(F,\gamma))$ are equipped with 
the restriction of the metric on ${\cal C}(\gamma),C^\prime(F,\gamma)$.
To this end let $\alpha,\beta\in {\cal V}(C^\prime(F,\gamma))$ be
such that $\Theta(\alpha),\Theta(\beta)$ are 
connected by an edge in ${\cal E}(\gamma)$. If $\Theta(\alpha),\Theta(\beta)$
are disjoint then the same holds true for $\alpha,\beta$ and 
hence $\alpha,\beta$ are connected by an edge in 
$C^\prime(F,\gamma)$. Otherwise $\Theta(\alpha),\Theta(\beta)$ are
disjoint from an essential simple closed curve $\rho$.

The boundaries $\partial \Theta(\alpha), \partial \Theta(\beta)$ 
of the discs $\Theta(\alpha),\Theta(\beta)$ are 
invariant under 
the involution $\Phi$ and therefore $\partial \Theta(\alpha)
\cup \partial \Theta(\beta)$ is disjoint 
from $\rho\cup \Phi(\rho)$. As a consequence,
the projection of  $\rho$ to the base surface $F$ 
is  a union of essential arcs with boundary on $\gamma$
and closed curves (not necessarily simple) which are disjoint from
$\alpha\cup \beta$. Then either there is a simple arc in $F$ 
with endpoints on $\gamma$ which is disjoint from 
$\alpha\cup \beta$, or 
there is an essential simple closed curve 
in $F$ which 
is disjoint from $\alpha\cup \beta$. 
Thus
the distance in ${\cal C}^\prime(F,\gamma)$ between
$\alpha\cup\beta$ is at most two.
The lemma follows.
\end{proof}

From Lemma \ref{arcanddisc}, Lemma \ref{model} and hyperbolicity
of the curve graph of $X$ (\cite{MM99}, and \cite{BF07} 
for the curve graph of a non-orientable surface) we
immediately obtain

\begin{corollary}\label{hypfamily}
There is a number $\delta>0$ such that each of the
graphs ${\cal E}(\gamma)$ is $\delta$-hyperbolic.
\end{corollary}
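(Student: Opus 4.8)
The plan is to derive Corollary \ref{hypfamily} directly by combining the three ingredients cited just before its statement, so the proof is essentially a bookkeeping argument with no new geometry. First I would fix a thick subsurface $X\subset\partial H$ and an $I$-bundle generator $\gamma\subset X$ with base surface $F$, so that by Lemma \ref{arcanddisc} the graph ${\cal E}(\gamma)$ is $2$-quasi-isometric to the electrified arc graph $C^\prime(F,\gamma)$. The point to keep in mind here is that hyperbolicity is a quasi-isometry invariant with an \emph{effective} dependence: if $Y$ is $\delta_0$-hyperbolic and $Z$ is $(L,c)$-quasi-isometric to $Y$, then $Z$ is $\delta_1$-hyperbolic for some $\delta_1=\delta_1(\delta_0,L,c)$. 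Since the quasi-isometry constant in Lemma \ref{arcanddisc} is the \emph{universal} constant $2$ (independent of $\gamma$ and $X$), it suffices to bound the hyperbolicity constant of $C^\prime(F,\gamma)$ independently of the choices.

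Next I would invoke Lemma \ref{model}: provided $F$ is not one of the small exceptional surfaces (a sphere with at most three holes or a projective plane with at most three holes), $C^\prime(F,\gamma)$ is $4$-quasi-isometric to the curve graph ${\cal C}{\cal G}(F)$ of $F$. Here I must check that the exceptional cases genuinely do not arise, or are handled trivially. This is precisely the content of the two paragraphs preceding the statement: a thick $X$ is never a four-holed sphere, so a separating $I$-bundle generator has base of positive genus or a sphere with $\geq 4$ holes; and for a non-separating $I$-bundle generator one may assume the base is not a three-holed projective plane, since in that degenerate case ${\cal E}(\gamma)$ has a single vertex (the unique $I$-bundle disc over the unique essential arc), which is vacuously hyperbolic. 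Likewise if $F$ is a disc or annulus or pair of pants then $C^\prime(F,\gamma)$ is finite-diameter, hence hyperbolic with a universal constant. So after discarding these finitely many combinatorial types (of which there are only boundedly many up to homeomorphism, since the genus of $H$ is fixed and $F$ embeds in $H$), we may assume Lemma \ref{model} applies.

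Finally I would apply hyperbolicity of the curve graph ${\cal C}{\cal G}(F)$: by Masur--Minsky \cite{MM99} for orientable $F$ and by Bestvina--Fujiwara \cite{BF07} for non-orientable $F$, ${\cal C}{\cal G}(F)$ is $\delta_F$-hyperbolic, and since $F$ ranges over a finite list of homeomorphism types (bounded complexity, as it embeds in the genus-$g$ handlebody $H$), we may take $\delta_F\leq\delta_0$ for a single $\delta_0=\delta_0(g)$. Chaining the three quasi-isometries — ${\cal E}(\gamma)$ to $C^\prime(F,\gamma)$ with constant $2$, $C^\prime(F,\gamma)$ to ${\cal C}{\cal G}(F)$ with constant $4$, and ${\cal C}{\cal G}(F)$ being $\delta_0$-hyperbolic — gives a single $\delta=\delta(g)>0$ such that every ${\cal E}(\gamma)$ is $\delta$-hyperbolic, which is the assertion. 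I do not expect any serious obstacle: the only point requiring genuine care is the verification that the exceptional small surfaces in Lemma \ref{model} either do not occur as bases of $I$-bundle generators for thick subsurfaces or yield graphs that are trivially hyperbolic, and that the whole list of relevant $F$ is finite so that the constants can be taken uniform in $\gamma$ and $X$.
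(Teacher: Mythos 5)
Your proposal follows exactly the paper's route: the corollary is stated there as an immediate consequence of Lemma \ref{arcanddisc}, Lemma \ref{model}, and hyperbolicity of the curve graph of the base surface (\cite{MM99}, \cite{BF07} in the non-orientable case), with the exceptional small base surfaces ruled out in the two paragraphs preceding Lemma \ref{arcanddisc}. Your additional bookkeeping on uniformity of the constants is consistent with the paper's remark that $\delta$ depends only on $H$ (and, via \cite{HPW13}, not even on $H$).
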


Note that the number $\delta>0$ in the statement of the 
corollary only depends on $H$ but not on $X$.
In fact, the main result of \cite{HPW13} together
with Lemma \ref{arcanddisc} shows that 
it can even be chosen independent of $H$.

We are
left with the verification of the bounded penetration 
property. To this end 
recall from \cite{MM00} the definition
of a \emph{subsurface projection}. Namely, 
let again $X\subset \partial H$ be a thick subsurface and 
let $Y\subset X$ be an essential, 
open connected subsurface which is distinct from 
$X$, a three-holed sphere and
an annulus. We call such a subsurface $Y$ a \emph{proper}
subsurface of $X$.
The arc and curve graph 
${\cal A\cal C}(Y)$ of $Y$ (here we do not specify a 
boundary component) is the graph
whose vertices are isotopy classes of 
arcs with endpoints on $\partial Y$ or essential 
simple closed curves in $Y$,
and two such vertices are connected by an edge of length
one if they can be realized disjointly. 
The vertex inclusion of the curve graph of $Y$ into the
arc and curve graph is a quasi-isometry \cite{MM00}.

There is a projection $\pi_Y$ of the curve graph
${\cal C\cal G}(X)$ of $X$ into 
the space of subsets of  
${\cal A\cal C}(Y)$ which associates to 
a simple closed curve in $X$ the homotopy classes of its
intersection components with $Y$.
For every simple closed multicurve $c$, the diameter of $\pi_Y(c)$ 
in ${\cal A\cal C}(Y)$ is at most one.
If $c$ can be realized disjointly from $Y$ then 
$\pi_Y(c)=\emptyset$.

As before, 
call a path $\rho$ in a metric
graph $G$ simplicial if $c$ maps each
interval $[k,k+1]$ (where $k\in \mathbb{Z}$) isometrically 
onto an edge of $G$.
The following lemma is a version of Theorem 3.1 of \cite{MM00}.

\begin{lemma}\label{projectionlarge}
For every number $L>1$ there is a number $\xi(L)>0$ with the
following property.
Let $Y$ be a proper subsurface of $X$ and let 
$\rho$ be a simplicial path in ${\cal C\cal G}(X)$ which is an 
$L$-quasi-geodesic. If
$\pi_Y(v)\not=\emptyset$ for every vertex $v$ on $\rho$ then
\[{\rm diam}\,\pi_Y(\rho)< \xi(L).\]
Moreover, $\xi(L)\leq ML^3+M$ for a universal constant $M>0$.
\end{lemma}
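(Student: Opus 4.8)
The plan is to reduce the statement to the classical bounded geodesic image theorem of Masur--Minsky (Theorem 3.1 of \cite{MM00}), which says precisely this for \emph{geodesics} in the curve graph with the universal constant replaced by a fixed $M_0$ depending only on the topological type of the ambient surface. Since the surface $X$ here ranges only over thick subsurfaces of the fixed $\partial H$, the constant $M_0$ from \cite{MM00} can be taken to depend only on the genus $g$ of $H$; this accounts for the ``universal constant $M$'' in the final assertion. So the real content is to pass from an $L$-quasi-geodesic $\rho$ to a genuine geodesic without losing control of the subsurface projection, and to track the dependence on $L$.

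First I would fix endpoints $x = \rho(0)$, $y = \rho(n)$ of $\rho$ and choose a simplicial geodesic $\sigma$ in ${\cal C\cal G}(X)$ with the same endpoints. By hyperbolicity of the curve graph (\cite{MM99}, \cite{BF07} in the non-orientable case, again with hyperbolicity constant depending only on $g$), $\sigma$ and $\rho$ fellow-travel: the Hausdorff distance between their images is at most some $D_1 = D_1(L,g)$, linear in $L$ by the standard Morse lemma for hyperbolic spaces. I would then argue that every vertex of $\sigma$ still has nonempty projection to $Y$, or at least that all but a controlled initial and terminal segment does: if $w$ is a vertex of $\sigma$ with $\pi_Y(w) = \emptyset$, then $w$ is disjoint from $Y$, hence $\partial Y$ (more precisely $\partial Y$ as a multicurve in $X$) has a component at distance $1$ from $w$ in ${\cal C\cal G}(X)$; combined with the fellow-traveling, this forces $\partial Y$ to come within distance $D_1 + 1$ of $\rho$, contradicting $\pi_Y(v) \neq \emptyset$ for all $v$ on $\rho$ unless $w$ lies within distance roughly $D_1$ of an endpoint of $\sigma$. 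Here I would want to be slightly careful: what is really needed is that the subarc of $\sigma$ consisting of vertices that do meet $Y$ has the same projection, up to the endpoints, as $\rho$ — and the ends are handled because $\pi_Y(x)$ and $\pi_Y(y)$ are within distance $1$ of $\pi_Y$ of the nearby geodesic vertices.

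Having arranged that a cofinal-in-both-directions subsegment $\sigma'$ of $\sigma$ consists of vertices meeting $Y$, I would apply the Masur--Minsky bounded geodesic image theorem to $\sigma'$ to get ${\rm diam}\,\pi_Y(\sigma') \le M_0$. Then ${\rm diam}\,\pi_Y(\rho)$ is bounded by ${\rm diam}\,\pi_Y(\sigma')$ plus error terms coming from (i) the two trimmed ends of $\sigma$, each of length $O(D_1)$, along which the projection can move by at most (length)$\times$(Lipschitz constant of $\pi_Y$ along a path, which is $1$ per step since adjacent curves have projections at distance $\le 1$), and (ii) the Hausdorff fellow-traveling between $\rho$ and $\sigma$, which again costs $O(D_1)$. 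Altogether ${\rm diam}\,\pi_Y(\rho) \le M_0 + c D_1(L,g)$ for a universal $c$. Since $D_1(L,g)$ is linear in $L$ in a $\delta$-hyperbolic space (the Morse lemma gives a bound of the form $\mathrm{const}(\delta)\cdot L$, and iterating the quasi-geodesic estimate one more time only costs another factor of $L$), we obtain a bound of the claimed shape $\xi(L) \le ML^3 + M$ after absorbing constants; the cubic exponent is a comfortable overestimate and I would not optimize it.

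The main obstacle I anticipate is the bookkeeping at the ends of $\sigma$: one must be sure that removing the (possibly nonempty) initial and terminal pieces of $\sigma$ where vertices fail to meet $Y$ does not throw away information about $\pi_Y(x)$ and $\pi_Y(y)$. The cleanest way around this is to note that if $\pi_Y(x)\neq\emptyset$ (which holds by hypothesis, as $x$ is a vertex of $\rho$) then $x$ itself is a vertex meeting $Y$, so one can simply take $\sigma'$ to run from $x$ to $y$ and instead control the diameter by inserting $x$ and $y$ and invoking the geodesic image theorem on the whole of $\sigma$ after checking its vertices meet $Y$ — the dichotomy above shows any failing vertex of $\sigma$ is within $O(D_1)$ of an endpoint, so one splices in short paths rather than deleting. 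Either way the estimate is the same; the delicate point is purely this localization of the ``bad'' vertices near the endpoints, and it follows formally from the fellow-traveling property together with the observation that a curve missing $Y$ is adjacent in ${\cal C\cal G}(X)$ to a component of $\partial Y$.
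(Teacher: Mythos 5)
There is a genuine gap at the central step of your argument. You claim that if a vertex $w$ of the comparison geodesic $\sigma$ has $\pi_Y(w)=\emptyset$, then fellow-traveling forces $\partial Y$ to come within distance $D_1+1$ of $\rho$ in ${\cal C\cal G}(X)$, and that this ``contradicts $\pi_Y(v)\neq\emptyset$ for all $v$ on $\rho$'' unless $w$ is near an endpoint. It does not. A vertex $v$ has empty projection to $Y$ only if $v$ can be realized disjointly from $Y$, i.e.\ only if $v$ lies at distance at most $1$ in ${\cal C\cal G}(X)$ from the set $A$ of curves contained in $X-Y$; being at distance $D_1+1\geq 2$ from $A$ is perfectly compatible with every vertex of $\rho$ meeting $Y$. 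So the ``bad'' vertices of $\sigma$ are \emph{not} localized near the endpoints --- indeed, by the contrapositive of the Masur--Minsky theorem, whenever ${\rm diam}\,\pi_Y(\rho(0)\cup\rho(n))$ is large the geodesic $\sigma$ is \emph{guaranteed} to pass through $A$, and this can happen anywhere along $\sigma$. Your trimming of initial and terminal segments, and the error bookkeeping built on it, therefore collapse exactly in the situation the lemma is about.

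The repair is the one the paper carries out, and it exploits a fact you never use: $A$ has diameter at most $2$ in ${\cal C\cal G}(X)$. One argues by contradiction, assuming the projection of the endpoints has diameter at least $2M+L(2n(L)+4)$ where $n(L)$ is the Morse constant. Then $\sigma$ meets $A$, so $\rho$ enters the $n(L)$-neighborhood of $A$; let $s+1$ and $t-1$ be the first and last times it does so. The outer pieces $\rho[0,s]$ and $\rho[t,n]$ avoid that neighborhood, so geodesics shadowing them avoid $A$ entirely and Masur--Minsky bounds their projections by $M$ each. The middle piece has endpoints within $2n(L)+2$ of each other (since $A$ has diameter $\leq 2$), hence length at most $L(2n(L)+3)$ because $\rho$ is an $L$-quasi-geodesic; since consecutive vertices of $\rho$ are disjoint curves both meeting $Y$, the projection moves by at most $1$ per step, giving the contradiction. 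Note also that the middle-segment bound is where the hypothesis on every vertex of $\rho$ is actually used, and that with $n(L)=kL^2$ this yields the stated $\xi(L)\leq ML^3+M$; your heuristic that the Morse constant is ``linear in $L$'' is not what the standard stability estimate gives.
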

\begin{proof} By hyperbolicity, for every $L>1$ there is a
number $n(L)>0$ so that for 
every $L$-quasi-geodesic $\rho$ 
in ${\cal C\cal G}(X)$ of finite length, the Hausdorff distance 
between the image of $\rho$ and the image of 
a geodesic $\rho^\prime$
with the same endpoints does not exceed $n(L)$.
Indeed, there is a number $k>0$ only 
depending on the hyperbolicity constant for ${\cal C\cal G}(X)$
such that we can choose $n(L)=kL^2$ (Proposition III.H.1.7 in 
\cite{BH99}).

Now let $Y\subset X$ be a proper subsurface.
By Theorem 3.1 of \cite{MM00}, there is a number $M>0$ 
with the following property. If $\zeta$ is any simplicial 
geodesic in 
${\cal C\cal G}(X)$ and if 
$\pi_Y(\zeta(s))\not=\emptyset$
for all $s\in \mathbb{Z}$ in the domain of $\zeta$ then 
\[{\rm diam}(\pi_Y(\zeta))\leq M.\]
 
Let $L>1$, let 
$\rho:[0,k]\to {\cal C\cal G}(X)$ be a simplicial path
which is an $L$-quasi-geodesic
and assume that 
\begin{equation}\label{dia}
{\rm diam}(\pi_Y(\rho(0)\cup \rho(k)))\geq 2M+L(2n(L)+4).\notag
\end{equation}
Our goal is to show that $\rho$ passes through the set
$A\subset {\cal C\cal G}(X)$ of all 
essential simple closed
curves in $X-Y$. 
The diameter of $A$ in ${\cal C\cal G}(X)$  
is at most two.

To this end 
let $\rho^\prime$ be a simplicial geodesic in ${\cal C\cal G}(X)$ 
with the same endpoints as $\rho$.
Theorem 3.1 of \cite{MM00} 
shows that there is some 
$u\in \mathbb{Z}$ such that $\rho^\prime(u)\in A$. Then $\rho$
passes through the $n(L)$-neighborhood of $A$.

Let $s+1\leq t-1$ be the smallest and the biggest number,
respectively, so that $\rho(s+1),\rho(t-1)$ are contained 
in the $n(L)$-neighborhood of $A$.
Then $\rho[0,s]$ (or $\rho[t,k]$) is
contained in the complement of the $n(L)$-neighborhood
of $A$. Since $\rho$ is an $L$-quasi-geodesic, 
a geodesic connecting $\rho(0)$ to $\rho(s)$
(or connecting $\rho(t)$ to $\rho(k)$)
is contained in the $n(L)$-neighborhood of $\rho[0,s]$ 
(or of $\rho[t,k]$) and
hence it does not pass through $A$.
In particular, 
\[{\rm diam}(\pi_Y(\rho(0)\cup \rho(s)))\leq M \text{ and }
{\rm diam}(\pi_Y(\rho(t)\cup \rho(k)))\leq M.\]
As a consequence, we have
\begin{equation}\label{diam}
{\rm diam}(\pi_Y(\rho(s)\cup \rho(t)))\geq L(2n(L)+4).
\end{equation}

Since $d_{{\cal C\cal G},X}(\rho(s+1),A)\leq n(L)$ and
$d_{{\cal C\cal G},X}(\rho(t-1),A)\leq n(L)$ and 
since the diameter of $A$ is at most $2$,  
we obtain
$d_{{\cal C\cal G},X}(\rho(s),\rho(t))\leq 2n(L)+2$.
Now $\rho$ is  a simplicial $L$-quasi-geodesic 
in ${\cal C\cal G}(X)$ and 
hence the length $t-s$ of $\rho[s,t]$ is at most
$L(2n(L)+2)+L=L(2n(L)+3)$. 
For all $\ell\in \mathbb{Z}$ the curves 
$\rho(\ell),\rho(\ell+1)$ are
disjoint and therefore if $\rho(\ell),\rho(\ell+1)$ both intersect
$Y$ then the diameter  of $\pi_Y(\rho(\ell)\cup 
\rho(\ell+1))$ is at most one. Thus
if $\rho(\ell)$ intersects $Y$ for all $\ell$ then  
\[{\rm diam}(\pi_Y(\rho(s)\cup \rho(t)))\leq L(2n(L)+3).\]
This contradicts inequality (\ref{diam}) and completes the proof of the
lemma.
\end{proof}

For simplicity of notation, in the remainder of this section
we identify discs in $H$ with their boundaries.
In other words, for a thick subsurface $X$ of $\partial H$ we
view the vertex sets of the graphs ${\cal S\cal D\cal G}(X),
{\cal E\cal D\cal G}(X)$ as subsets of the vertex set of the
curve graph ${\cal C\cal G}(X)$ of $X$.

Let ${\cal S\cal D\cal G}_0(X)$ be the ${\cal E}$-electrification 
of ${\cal E\cal D\cal G}(X)$. 
For each $I$-bundle generator $\gamma$ in $X$, the 
graph ${\cal S\cal D\cal G}_0(X)$ contains a special vertex $v_\gamma$.
The vertex set of ${\cal S\cal D\cal G}_0(X)$ is the union of 
the set of all discbounding simple closed curves in $X$ 
with the set $\{v_\gamma\mid \gamma\}$.
In particular, there is a natural vertex inclusion 
${\cal V}({\cal S\cal D\cal G}_0(X))\to {\cal C\cal G}(X)$ which 
maps the special vertex
$v_\gamma$ to the simple closed curve $\gamma$. 
Since ${\cal S\cal D\cal G}(X)$ is quasi-isometric to the
${\cal E}$-electrification of ${\cal E\cal D\cal G}(X)$, 
Proposition \ref{distanceinter2}
shows that this vertex inclusion extends to 
a quasi-isometric embedding ${\cal S\cal D\cal G}_0(X)\to {\cal C\cal G}(X)$. 

Now we are ready to show

\begin{lemma}\label{boundedinelec}
For every thick subsurface $X$ of $\partial H$ 
the family ${\cal E}$ has the bounded penetration property.
\end{lemma}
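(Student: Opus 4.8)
The plan is to verify the two clauses of Definition~\ref{bcp} for the family ${\cal E}=\{{\cal E}(\gamma)\mid\gamma\}$ inside the ${\cal E}$-electrification ${\cal S}{\cal D}{\cal G}_0(X)$ of ${\cal E}{\cal D}{\cal G}(X)$, by transporting everything to the curve graph ${\cal C}{\cal G}(X)$ --- where the subsurface projection estimate Lemma~\ref{projectionlarge} is available --- and by translating ``deep penetration of ${\cal E}(\gamma)$'' into ``large $I$-bundle projection''. Recall the quasi-isometric embedding $\iota\colon {\cal S}{\cal D}{\cal G}_0(X)\to{\cal C}{\cal G}(X)$ with $\iota(v_\gamma)=\gamma$. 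First I would fix an $I$-bundle generator $\gamma$ with base $F$, realised by $\Psi\colon{\cal J}(F)\to H$ with bundle involution $\Phi$, and set $Q=\Psi(\partial_h{\cal J}(F))$; up to isotopy $Q=X\setminus\gamma$, and either $Q$ is connected and carries a double cover $p\colon Q\to F$ (when $F$ is non-orientable) or $Q$ is two copies of $F$ exchanged by $\Phi$ (when $F$ is orientable). By Remark~\ref{fourhole} and the discussion preceding Lemma~\ref{model} each component of $Q$ is a proper subsurface of $X$ of the type to which Lemma~\ref{projectionlarge} applies. For a curve $c$ in $X$ not isotopic to $\gamma$ I would let $\Pi_\gamma(c)\subset{\cal A}{\cal C}(F)$ be the image under $p$ (resp.\ under the two identifications with $F$) of the Masur--Minsky projection of $c$ to $Q$; this is a nonempty set of uniformly bounded diameter.

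Then I would record three properties of $\Pi_\gamma$. First, disjoint curves in $X$ have $\Pi_\gamma$-images within uniformly bounded ${\cal A}{\cal C}(F)$-distance. Second, for $D,D'\in{\cal E}(\gamma)$ the boundary $\partial D$ is $\Phi$-invariant and meets $\gamma$ in two points, so $\Pi_\gamma(\partial D)$ is coarsely the projection of $\partial D$ to $F$ of the paragraph preceding Lemma~\ref{arcanddisc}; combining Lemma~\ref{arcanddisc}, Lemma~\ref{model}, and the fact that lifting is a quasi-isometric embedding of curve graphs (trivial in the orientable case), there are constants $a\geq1$, $b\geq0$ depending only on $H$ with
\[a^{-1}d_{{\cal E},X}(D,D')-b\ \leq\ d_{{\cal A}{\cal C}(F)}\big(\Pi_\gamma(\partial D),\Pi_\gamma(\partial D')\big)\ \leq\ a\,d_{{\cal E},X}(D,D')+b.\]
Third, a bounded geodesic image statement: for every $L'>1$ there is $B(L')>0$ such that along any simplicial $L'$-quasi-geodesic $\sigma$ in ${\cal C}{\cal G}(X)$, all of whose vertices project nontrivially to $Q$, one has ${\rm diam}_{{\cal A}{\cal C}(F)}\Pi_\gamma(\sigma)\leq B(L')$. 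Given these, the argument is as follows. Let $\rho\colon[0,n]\to{\cal S}{\cal D}{\cal G}_0(X)$ be an efficient $L$-quasi-geodesic with $\rho(k)=v_\gamma$; efficiency forces $\rho$ to meet $v_\gamma$ only at $k$. Push the two halves $\rho[0,k-1]$ and $\rho[k+1,n]$ into ${\cal C}{\cal G}(X)$ and subdivide each edge by a bounded number of curves, chosen not isotopic to $\gamma$ --- which is possible because every edge of ${\cal S}{\cal D}{\cal G}_0(X)$ is incident to a disc boundary and $\gamma$ meets every disc in $H$ with boundary in $X$. One obtains simplicial $L'$-quasi-geodesics whose vertices (disc boundaries, $I$-bundle generators $\ne\gamma$, or inserted curves) are never isotopic to $\gamma$ nor peripheral, hence project nontrivially to $Q$. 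The bounded geodesic image property then pins $\Pi_\gamma(\partial\rho(0))$ to within $B(L')$ of $\Pi_\gamma(\partial\rho(k-1))$ and $\Pi_\gamma(\partial\rho(n))$ to within $B(L')$ of $\Pi_\gamma(\partial\rho(k+1))$.

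Now if $d_{{\cal E},X}(\rho(k-1),\rho(k+1))\geq p(L)$, the second property gives $d_{{\cal A}{\cal C}(F)}(\Pi_\gamma(\partial\rho(0)),\Pi_\gamma(\partial\rho(n)))\geq a^{-1}p(L)-b-2B(L')$. If a competing efficient $L$-quasi-geodesic $\rho'$ with the same endpoints avoided $v_\gamma$, the same subdivision would make $\iota(\rho')$ a simplicial $L'$-quasi-geodesic all of whose vertices project nontrivially to $Q$, whence $d_{{\cal A}{\cal C}(F)}(\Pi_\gamma(\partial\rho(0)),\Pi_\gamma(\partial\rho(n)))\leq B(L')$ --- a contradiction once $p(L)>a(3B(L')+b)$. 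So, choosing $p(L)=a(3B(L')+b)+2r+1$, every competitor $\rho'$ passes through $v_\gamma$, say at $k'$; applying the endpoint estimate to $\rho[0,k-1]$ against $\rho'[0,k'-1]$ and to the two right halves, and using the second property once more, bounds $d_{{\cal E},X}(\rho(k\mp1),\rho'(k'\mp1))$ by $p(L)$. Since $p(L)$ is independent of $\gamma$, this is the bounded penetration property; boundedness of ${\cal E}$, needed to even state $p(L)>2r$, follows along the same lines (or from Proposition~\ref{distanceinter2}). The step I expect to be the main obstacle is the bounded geodesic image property for $\Pi_\gamma$: when $Q$ is connected it is immediate from Lemma~\ref{projectionlarge} together with the observation that a curve of ${\cal C}{\cal G}(X)$ has empty projection to $Q=X\setminus\gamma$ only if it is isotopic to $\gamma$ or peripheral, but when $Q$ has two components a vertex may project to $Q$ while missing one of the two copies of $F$ (for instance an $I$-bundle generator disjoint from $\gamma$), so Lemma~\ref{projectionlarge} cannot be applied to either component alone; here I would use that $\Phi$ interchanges the components and fixes $\gamma$ to bound the number of such crossover vertices and hence their contribution to the diameter.
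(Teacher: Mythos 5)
Your overall strategy is the paper's: push the quasi-geodesic into ${\cal C\cal G}(X)$, detect deep penetration of ${\cal E}(\gamma)$ via subsurface projections and Lemma \ref{projectionlarge}, and convert projection distance into ${\cal E}(\gamma)$-distance via Lemma \ref{arcanddisc}, Lemma \ref{model} and \cite{RS09}. But the step you yourself flag as the main obstacle is a genuine gap, and your proposed fix does not close it. First, in the separating case the set $\Pi_\gamma(c)$ you define --- the union of the projections of $c$ to the two components $X_1,X_2$ of $X-\gamma$, both pushed to ${\cal A\cal C}(F)$ --- does not have uniformly bounded diameter for a curve $c$ crossing $\gamma$: the arcs of $c$ in $X_1$ and the arcs of $c$ in $X_2$ are completely unrelated under the identifications of $X_1,X_2$ with $F$. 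So $\Pi_\gamma$ is not a coarse map, and your first and third recorded properties are not well posed as stated. Second, and more seriously, bounding the \emph{number} of crossover vertices does not bound their \emph{contribution} to the projection diameter: a single vertex $v$ of the subdivided path that is disjoint from $X_1$ (an $I$-bundle generator disjoint from $\gamma$, or an inserted curve contained in $X_2$) sits between two neighbours that are merely disjoint from $v$, and disjointness from $v$ places no restriction at all on their $X_1$-projections, so the jump across even one crossover vertex can be arbitrarily large. This is exactly the phenomenon a bounded geodesic image statement must exclude, and counting does not exclude it; note also that crossover vertices can occur whether or not the competitor $\rho^\prime$ passes through $v_\gamma$, so your intended contradiction does not materialize. (A minor further slip: the quantity you need to bound at the entry and exit points is the intrinsic distance in ${\cal E}(\gamma)$, not $d_{{\cal E},X}$.)

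The paper avoids both problems by never defining a single-valued projection to $F$ for arbitrary curves. For separating $\gamma$ it works upstairs with $\pi_{X_1}$ and $\pi_{X_2}$ separately and uses the \emph{minimum} of the two distances: if that minimum is large, Lemma \ref{projectionlarge} applied to either component forces the curve-graph path $\tilde\rho$ through a curve disjoint from that component, hence through $\gamma$ itself by the way $\tilde\rho$ is built from $\rho$; and the minimum is the right quantity because the boundaries of discs in ${\cal E}(\gamma)$ are invariant under the bundle involution, so for them the two component distances agree with the distance in $C^\prime(F,\gamma)$ up to uniform constants by Lemmas \ref{arcanddisc} and \ref{model}. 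For non-separating $\gamma$ the paper compares distances in ${\cal A\cal C}(X-\gamma)$ with distances in $C^\prime(F,\gamma)$ via \cite{RS09} only for these invariant disc boundaries, sidestepping the push-forward of arcs along the double cover $X-\gamma\to F$ (which need not produce simple arcs). To repair your argument you would need to replace $\Pi_\gamma$ by this pair-of-projections bookkeeping, or else prove a bounded geodesic image statement that genuinely tolerates vertices missing one component.
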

\begin{proof} Let $L\geq 1$ and let 
$\rho:[0,n]\to {\cal S\cal D\cal G}_0(X)$ 
be an efficient simplicial $L$-quasi-geodesic. 
Let $\tilde\rho$ be a simplicial arc in ${\cal C\cal G}(X)$ which 
is obtained from $\rho$ 
as follows.

A vertex $\rho(j)$ in 
${\cal S\cal D\cal G}_0(X)$ which 
is not one of the special vertices $v_\gamma$ 
also defines a vertex in ${\cal C\cal G}(X)$.
If $\rho(j),\rho(j+1)$ are two such vertices which 
are connected in ${\cal S\cal D\cal G}_0(X)$ by an edge
then they are connected in ${\cal E\cal D\cal G}(X)\subset
{\cal S\cal D\cal G}_0(X)$ by an edge. By the definition of the
electrified disc graph, this means that there is a simple
closed curve $\alpha$ in $X$ which is disjoint from $\rho(j)\cup \rho(j+1)$.
Thus $\rho(j)$ and  $\rho(j+1)$ 
can be connected in 
${\cal C\cal G}(X)$ by an edge path of length at most two.

Similarly, if $\rho(j)=v_\gamma$ for an $I$-bundle
generator $\gamma$ in $X$, then $\rho(j-1),\rho(j+1)\in 
{\cal E\cal D\cal G}(X)$, moreover $\rho(j-1),\rho(j+1)$ 
intersect $\gamma$ in precisely two points. Replace 
$\rho[j-1,j+1]$ by an edge path in 
${\cal C\cal G}(X)$ with the same endpoints of length at
most four which passes through $\gamma$. 
The arc $\tilde\rho$ constructed in this way from $\rho$ is a uniform
quasi-geodesic in ${\cal C\cal G}(X)$ which 
passes through any $I$-bundle generator $\gamma$ at most once,
and it passes through $\gamma$ if and only if it 
passes through a simple closed curve which is
disjoint from $\gamma$.

Let $\gamma$ be a separating $I$-bundle generator in $X$.
Then $X-\gamma$ has two homeomorphic components $X_1,X_2$.
Denote by $d_{{\cal A\cal C},X_i}$ the distance in the arc
and curve graph of $X_i$ $(i=1,2)$.
Every simple closed curve $\alpha$ in $X$ which has an essential
intersection with $\gamma$ projects to a collection of arcs
$\alpha_1,\alpha_2$ in $X_1,X_2$. If $\beta$ is another such curve
then define 
\[d_{{\cal A\cal C}(X-\gamma)}(\alpha,\beta)=
\min\{d_{{\cal A\cal C}(X_1)}(\alpha_1,\beta_1),
d_{{\cal A\cal C}(X_2)}(\alpha_2,\beta_2)\}.\]
Thus if $\pi^\gamma:{\cal C\cal G}(X)\to 
{\cal A\cal C}(X-\gamma)={\cal A\cal C}(X_1)\cup {\cal A\cal C}(X_2)$ denotes
the subsurface projection then  
by Lemma \ref{projectionlarge}, there is a number 
$M(L)>0$ with the following property. 

Let again $\rho:[0,n]\to {\cal S\cal D\cal G}_0(X)$ be a simplicial 
$L$-quasi-geodesic. If 
\[d_{{\cal A\cal C}(X-\gamma)}(\pi^\gamma(\rho(0)),\pi^\gamma(\rho(n)))
\geq M(L)\]
then there is some $k_0\in \mathbb{Z}$ such that 
$\tilde\rho(k_0)=\gamma$.
Equivalently, there is some $k<n$ such that
$\rho(k)=v_\gamma$. Moreover, 
\[d_{{\cal A\cal C}(X_i)}(\pi^\gamma(\rho(0)), \pi^\gamma(\rho(k-1)))\leq
M(L)\,(i=1,2),\] and similarly
\[d_{{\cal A\cal C}(X_i)}(\pi^\gamma(\rho(k+1)), 
\pi^\gamma(\rho(n)))\leq M(L)\, (i=1,2).\]

As a consequence, if 
$\rho^\prime:[0,n^\prime]\to {\cal S\cal D\cal G}_0(X)$ 
is another efficient quasi-geodesic with the same endpoints,
then there is some $k^\prime<n^\prime$ 
such that $\rho^\prime(k^\prime)=v_\gamma$, 
and 
\begin{align}
d_{{\cal A\cal C}(X-\gamma)}(\pi^\gamma(\rho(k-1)),
\pi^\gamma(\rho^\prime(k^\prime-1)))\leq 2M(L),\notag\\
d_{{\cal A\cal C}(X-\gamma)}
(\pi^\gamma(\rho(k+1)),\pi^\gamma(\rho^\prime(k^\prime+1)))\leq 2M(L).
\notag\end{align}
Lemma \ref{arcanddisc}
and Lemma \ref{model} now show that the 
distance in ${\cal E}(\gamma)$ between 
$\rho(k-1),\rho^\prime(k^\prime-1)$ and between
$\rho(k+1),\rho^\prime(k^\prime+1)$ is uniformly bounded.
In particular, the bounded penetration property holds true
for the subgraph ${\cal E}(\gamma)$ and for quasi-geodesics
connecting two discs whose boundaries have projections
of large diameter into $X-\gamma$.

On the other hand, if $\rho:[0,n]\to {\cal S\cal D\cal G}_0(X)$ is 
any efficient $L$-quasi-geodesic and if $\rho(k)=v_\gamma$ for some
$I$-bundle generator 
$\gamma$ then using once more Lemma \ref{projectionlarge}, we conclude
that 
\[d_{{\cal A\cal C}(X-\gamma)}
(\pi^\gamma(\rho(0)),\pi^\gamma(\rho(k-1)))\leq M(L).\]
Therefore the reasoning in the previous paragraph shows that
whenever the distance in ${\cal E}(\gamma)$ between
$\rho(k-1),\rho(k+1)$ is sufficiently large then 
\[d_{{\cal A\cal C}(X-\gamma)}(\pi^\gamma(\rho(0)),\pi^\gamma(\rho(n)))\geq 
M(L).\]
In other words, the conclusion in the previous
paragraph holds true, and 
the bounded penetration property for separating
$I$-bundle generators follows.
 
Now assume that $\gamma$ is non-separating. Let 
$\pi^\gamma:{\cal C\cal G}(X)\to {\cal A\cal C}(X-\gamma)$ be the subsurface
projection. Using the notations from the beginning of this proof,
if the distance in ${\cal A\cal C}(X-\gamma)$ 
between $\pi^\gamma(\rho(0))$ and $\pi^\gamma(\rho(n))$ is at least
$M(L)$ then there is some $k$ so that $\rho(k)=v_\gamma$. 
Moreover, we have $\rho(k-1)\in {\cal E}(\gamma),
\rho(k+1)\in {\cal E}(\gamma)$.
As a consequence, the curves $\rho(k-1),\rho(k+1)$ are
invariant under the orientation reversing involution $\phi$ of $X$ which 
preserves $\gamma$ and 
extends to an involution of the $I$-bundle defined by $\gamma$.

Let $F$ be the base of the $I$-bundle 
defined by $\gamma$ and let $\alpha,\beta\in C^\prime(F,\gamma)$ be
the projections of $\rho(k-1),\rho(k+1)$. By Lemma \ref{arcanddisc}, 
the distance in ${\cal E}(\gamma)$ between 
$\rho(k-1),\rho(k+1)$ is 
uniformly equivalent 
to the distance in $C^\prime(F,\gamma)$ between $\alpha,\beta$.
Since $\rho(k-1),\rho(k+1)$ are invariant under
the involution $\phi$, the main result of \cite{RS09} shows that
this distance is also uniformly equivalent to the distance 
between $\pi^\gamma(\rho(k-1))$ and $\pi^\gamma(\rho(k+1))$ 
in ${\cal A\cal C}(X-\gamma)$.

In particular, if $\rho^\prime$ is any other 
efficient $L$-quasi-geodesic in ${\cal S\cal D\cal G}_0(X)$  
with the same endpoints, then there is some $k^\prime$ with 
$\rho(k^\prime)=v_\gamma$, and the distance in ${\cal E}(\gamma)$ between
$\rho(k-1),\rho^\prime(k^\prime-1)$ and between
$\rho(k+1)$ and $\rho^\prime(k^\prime+1)$ is uniformly bounded.
The bounded penetration property follows in this case.

Finally, as in the case of a separating
$I$-bundle generator, 
this argument can be inverted. Together this 
completes the proof of the lemma.
\end{proof}

We can now apply Theorem \ref{hypextension} to conclude

\begin{corollary}\label{elect} 
For every thick subsurface $X$ of 
$\partial H$, the graph 
${\cal E\cal D\cal G}(X)$ is $\delta$-hyperbolic for a number
$\delta >0$ not depending on $X$. 
There is a number $k>0$ such that 
for every $I$-bundle generator $\gamma$ in $X$, 
the subgraph ${\cal E}(\gamma)$
of ${\cal E\cal D\cal  G}(X)$ is $k$-quasi-convex. 
\end{corollary}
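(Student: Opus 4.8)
The plan is to deduce Corollary~\ref{elect} directly from the machinery already assembled in Sections~2 and~3, applied to the relative structure on ${\cal E\cal D\cal G}(X)$ given by the family ${\cal E}=\{{\cal E}(\gamma)\mid \gamma\}$. First I would recall the reduction already made in this section: since for any two disjoint essential simple closed curves on $X$ there is a third one disjoint from both, the graph ${\cal S\cal D\cal G}(X)$ is $2$-quasi-isometric to the ${\cal E}$-electrification ${\cal S\cal D\cal G}_0(X)$ of ${\cal E\cal D\cal G}(X)$; and by Proposition~\ref{distanceinter2} the latter is quasi-isometrically embedded in the curve graph ${\cal C\cal G}(X)$, which is hyperbolic (\cite{MM99}, \cite{BF07}). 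Hence ${\cal S\cal D\cal G}_0(X)$, i.e.\ the ${\cal E}$-electrification of ${\cal E\cal D\cal G}(X)$, is $\delta_0$-hyperbolic for a constant $\delta_0$ depending only on $H$. Combined with Corollary~\ref{hypfamily} (the graphs ${\cal E}(\gamma)$ are uniformly $\delta$-hyperbolic) and Lemma~\ref{boundedinelec} (the family ${\cal E}$ has the bounded penetration property), this shows that ${\cal E\cal D\cal G}(X)$ is hyperbolic relative to the family ${\cal E}$ of uniformly $\delta$-hyperbolic complete connected subgraphs, in the sense of Definition~\ref{realtivhyp}.

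Next I would simply invoke Theorem~\ref{hypextension}: under exactly these hypotheses it asserts that ${\cal E\cal D\cal G}(X)$ is hyperbolic, that enlargements of geodesics in the electrification are uniform quasi-geodesics, and — via Corollary~\ref{quasiconvex} — that the subgraphs ${\cal E}(\gamma)$ are uniformly quasi-convex. The hyperbolicity constant produced by Theorem~\ref{hypextension} is an effective function of the input data, namely of $\delta_0$, of the uniform hyperbolicity constant $\delta$ of the ${\cal E}(\gamma)$, and of the bounded penetration function $p(L)$; the quasi-convexity constant $k$ of Corollary~\ref{quasiconvex} is likewise a function of these. Since all three of these inputs were shown above to depend only on $H$ and not on the particular thick subsurface $X$, the output constants are also independent of $X$, which is precisely what the corollary claims.

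The one point that requires a little care — and which I would flag as the main thing to verify rather than a deep obstacle — is the uniformity of the bounded penetration function $p(L)$ in $X$. The proof of Lemma~\ref{boundedinelec} goes through subsurface projections and Lemma~\ref{projectionlarge}, whose constant $\xi(L)$ is bounded by $ML^3+M$ for a \emph{universal} $M$; the passage from projection estimates to distance estimates in ${\cal E}(\gamma)$ uses Lemma~\ref{arcanddisc}, Lemma~\ref{model}, and the main results of \cite{RS09} and \cite{HPW13}, all with constants depending only on the topological type of $H$ (indeed \cite{HPW13} gives constants independent even of $H$). So $p(L)$ can be taken to depend only on $L$ and $H$, and the claimed $X$-independence of $\delta$ and $k$ follows. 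I would therefore write the proof as: assemble Proposition~\ref{distanceinter2}, Corollary~\ref{hypfamily} and Lemma~\ref{boundedinelec} to see that ${\cal E\cal D\cal G}(X)$ is relatively hyperbolic with uniformly hyperbolic peripheral family ${\cal E}$, apply Theorem~\ref{hypextension} and Corollary~\ref{quasiconvex}, and observe that every constant entering the conclusion depends only on $H$.

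\begin{proof}
By the observation at the beginning of this section, ${\cal S\cal D\cal G}(X)$ is $2$-quasi-isometric to the ${\cal E}$-electrification ${\cal S\cal D\cal G}_0(X)$ of ${\cal E\cal D\cal G}(X)$. By Proposition~\ref{distanceinter2}, ${\cal S\cal D\cal G}(X)$ admits a quasi-isometric embedding into the curve graph ${\cal C\cal G}(X)$ of $X$, with constants depending only on the genus of $H$. Since ${\cal C\cal G}(X)$ is $\delta_0$-hyperbolic for a number $\delta_0$ depending only on the topological type of $X$, and hence — as $X$ ranges over the finitely many topological types of thick subsurfaces of $\partial H$ — only on $H$, it follows that ${\cal S\cal D\cal G}_0(X)$ is hyperbolic with a constant depending only on $H$. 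Thus the ${\cal E}$-electrification of ${\cal E\cal D\cal G}(X)$ is hyperbolic.

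By Corollary~\ref{hypfamily} there is a number $\delta>0$, depending only on $H$, such that each graph ${\cal E}(\gamma)$ is $\delta$-hyperbolic; these graphs are complete connected subgraphs of ${\cal E\cal D\cal G}(X)$ by construction. By Lemma~\ref{boundedinelec} the family ${\cal E}$ has the bounded penetration property, and inspection of its proof shows that the function $p(L)$ may be taken to depend only on $L$ and $H$: the projection estimates of Lemma~\ref{projectionlarge} have the universal bound $\xi(L)\leq ML^3+M$, and the comparison with distances in ${\cal E}(\gamma)$ via Lemma~\ref{arcanddisc}, Lemma~\ref{model}, \cite{RS09} and \cite{HPW13} involves only constants depending on the topological type of $H$. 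Consequently ${\cal E\cal D\cal G}(X)$ is hyperbolic relative to the family ${\cal E}$ of uniformly $\delta$-hyperbolic complete connected subgraphs in the sense of Definition~\ref{realtivhyp}, with relative hyperbolicity data bounded in terms of $H$ alone.

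Now Theorem~\ref{hypextension} applies and shows that ${\cal E\cal D\cal G}(X)$ is hyperbolic, with a hyperbolicity constant which is an effective function of the hyperbolicity constant of the ${\cal E}$-electrification, of $\delta$, and of the bounded penetration function $p$. By the previous two paragraphs all of these depend only on $H$, and therefore ${\cal E\cal D\cal G}(X)$ is $\delta^\prime$-hyperbolic for a number $\delta^\prime>0$ not depending on $X$. Finally, Corollary~\ref{quasiconvex} provides a number $k>0$, again a function of the same data and hence depending only on $H$, such that each subgraph ${\cal E}(\gamma)$ of ${\cal E\cal D\cal G}(X)$ is $k$-quasi-convex. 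This completes the proof.
\end{proof}
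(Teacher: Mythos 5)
Your proposal is correct and follows exactly the route the paper takes: the section has already established that the ${\cal E}$-electrification of ${\cal E\cal D\cal G}(X)$ is hyperbolic via Proposition~\ref{distanceinter2}, that the ${\cal E}(\gamma)$ are uniformly hyperbolic (Corollary~\ref{hypfamily}), and the bounded penetration property (Lemma~\ref{boundedinelec}), and the paper then simply invokes Theorem~\ref{hypextension}. Your additional bookkeeping of why every constant depends only on $H$ and not on $X$ is a welcome elaboration of what the paper leaves implicit, not a deviation.
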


In the remainder of this section, we specialize to the
case $X=\partial H$. We begin with establishing a
distance estimate for the electrified disc
graph ${\cal E\cal D\cal G}={\cal E\cal D\cal G}(\partial H)$.

If 
$\gamma$ is an $I$-bundle generator in $\partial H$ then let
$\pi^\gamma$ be the subsurface 
projection of a simple closed curve in $\partial H$ 
into the arc and curve-graph of
$\partial H-\gamma$.

For a subset $A$ of a metric space $Y$ 
and a number $C>0$ define
${\rm diam}(A)_C$ to be the diameter of $A$ if this 
diameter is at least $C$ and let ${\rm diam}(A )_C=0$ otherwise.
The notation $\asymp$ means equality up to a universal
multiplicative constant.

\begin{corollary}\label{distancesecond}
Let $H$ is a handlebody of genus $g\geq 2$.
Then there is a number $C>0$ such that 
\[d_{\cal E}(D,E)\asymp 
d_{\cal C\cal G}(\partial D,\partial E) +
\sum_\gamma {\rm diam}(\pi^\gamma(\partial D\cup\partial E))_C\] 
where $\gamma$ passes through all 
$I$-bundle generators on $\partial H$.
\end{corollary}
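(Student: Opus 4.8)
The plan is to read the estimate off from the relatively hyperbolic structure established above, via the enlargement construction of Section~2. Recall from Corollary~\ref{hypfamily} and Lemma~\ref{boundedinelec} that ${\cal E\cal D\cal G}(\partial H)$ is hyperbolic relative to the family ${\cal E}=\{{\cal E}(\gamma)\}$ of complete $\delta$-hyperbolic subgraphs, with ${\cal E}$-electrification ${\cal S\cal D\cal G}_0(\partial H)$, and that it is hyperbolic by Corollary~\ref{elect}. By Proposition~\ref{distanceinter2} the vertex inclusion extends to a quasi-isometric embedding ${\cal S\cal D\cal G}_0(\partial H)\to{\cal C\cal G}(\partial H)$ which sends each special vertex $v_\gamma$ to the curve $\gamma$, so that $d_{{\cal S\cal D\cal G}_0(\partial H)}(D,E)\asymp d_{\cal C\cal G}(\partial D,\partial E)$; this will account for the first summand.

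First I would choose a simplicial geodesic $\sigma\colon[0,n]\to{\cal S\cal D\cal G}_0(\partial H)$ from $D$ to $E$. Being an embedded path, $\sigma$ is efficient, so by Theorem~\ref{hypextension} an enlargement $\hat\sigma$ of $\sigma$ is a uniform quasi-geodesic in ${\cal E\cal D\cal G}(\partial H)$, whence $d_{\cal E}(D,E)\asymp\ell(\hat\sigma)$. Since the enlargement replaces, around each special vertex $\sigma(k)=v_\gamma$, the length-two segment $\sigma[k-1,k+1]$ by a geodesic of ${\cal E}(\gamma)$ joining $\sigma(k-1)$ to $\sigma(k+1)$, one gets
\[
\ell(\hat\sigma)\;\asymp\;\ell(\sigma)\;+\;\sum_{\gamma}
d_{{\cal E}(\gamma)}\bigl(\sigma(k_\gamma-1),\sigma(k_\gamma+1)\bigr),
\]
the finite sum running over those $\gamma$ with a special vertex $\sigma(k_\gamma)=v_\gamma$ on $\sigma$. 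Generators along which $d_{{\cal E}(\gamma)}(\cdot,\cdot)$ stays bounded contribute, in total, at most a fixed multiple of $\ell(\sigma)$, so up to the multiplicative constant only the \emph{wide} generators matter; and by the first paragraph $\ell(\sigma)\asymp d_{\cal C\cal G}(\partial D,\partial E)$.

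It then remains to identify the wide contributions with the projection terms, for which I would reuse the analysis in the proof of Lemma~\ref{boundedinelec}. If $\gamma$ is wide for $\sigma$ then $\sigma$ passes through $v_\gamma$, and, by Lemma~\ref{projectionlarge}, the projections $\pi^\gamma(\sigma(k_\gamma\mp1))$ to $\partial H-\gamma$ lie uniformly close to $\pi^\gamma(\partial D)$ and $\pi^\gamma(\partial E)$; moreover, by Lemmas~\ref{model} and~\ref{arcanddisc} --- together with Lemma~\ref{projectionlarge} when $\gamma$ is separating and with the main result of \cite{RS09} when $\gamma$ is non-separating --- the distance $d_{{\cal E}(\gamma)}(\sigma(k_\gamma-1),\sigma(k_\gamma+1))$ is uniformly comparable to ${\rm diam}\,\pi^\gamma(\partial D\cup\partial E)$ as soon as the latter exceeds a fixed threshold. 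Running the same argument for $\sigma$ itself also gives the two converses needed: a generator with ${\rm diam}\,\pi^\gamma(\partial D\cup\partial E)$ large must occur on $\sigma$, and a generator occurring on $\sigma$ contributes a bounded amount unless its projection diameter is large. Choosing $C$ larger than all thresholds that occur --- the penetration constant $p(L)$, the constant $M(L)$ of Lemma~\ref{projectionlarge}, and the comparison constants of Lemmas~\ref{model} and~\ref{arcanddisc} and of \cite{RS09}, all depending only on the genus of $H$ --- then identifies, up to $\asymp$, the sum of the wide contributions with $\sum_\gamma{\rm diam}(\pi^\gamma(\partial D\cup\partial E))_C$, and the corollary follows.

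I expect the main obstacle to be exactly this last bookkeeping of constants: one must check that ``wide for $\sigma$'' and ``projection diameter $\ge C$'' single out the same generators up to the allowed multiplicative slack, uniformly over all $I$-bundle generators, and that the two-sided comparison between $d_{{\cal E}(\gamma)}$ of the entry/exit pair and ${\rm diam}\,\pi^\gamma(\partial D\cup\partial E)$ --- which for separating $\gamma$ goes through the curve graph of the base surface via Lemma~\ref{model} and for non-separating $\gamma$ through \cite{RS09} --- is uniform in $\gamma$. One should also fix, as in the proof of Lemma~\ref{boundedinelec}, the convention for $\pi^\gamma$ and for its diameter when $\gamma$ is separating and $\partial H-\gamma$ is disconnected.
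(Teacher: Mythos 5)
Your proposal is correct and follows essentially the same route as the paper: take a geodesic in the ${\cal E}$-electrification ${\cal S\cal D\cal G}_0$, use Corollary \ref{secondlevel}/Corollary \ref{elect} to see that its enlargement is a uniform quasi-geodesic in ${\cal E\cal D\cal G}$, identify the length of the underlying geodesic with $d_{\cal C\cal G}(\partial D,\partial E)$ via Proposition \ref{distanceinter2}, and identify the inserted ${\cal E}(\gamma)$-segments with the subsurface projection terms via the analysis in Lemma \ref{boundedinelec} (together with Lemmas \ref{model}, \ref{arcanddisc} and \cite{RS09}). Your write-up simply makes explicit the bookkeeping that the paper's shorter proof leaves implicit.
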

\begin{proof} Let ${\cal S\cal D\cal G}_0$ 
be the ${\cal E}$-electrification of ${\cal E\cal D\cal G}$.
For an $I$-bundle generator $\gamma$ in $\partial H$
denote by $v_\gamma$ the special vertex in ${\cal S\cal D\cal G}_0$
defined by $\gamma$.

Let 
$\rho:[0,k]\to {\cal S\cal D\cal G}_0$ 
be a geodesic. By Corollary \ref{secondlevel}
and Corollary \ref{elect}, an enlargement 
$\hat \rho$ of $\rho$ is a uniform quasi-geodesic
in ${\cal E\cal D\cal G}$. Thus it suffices to show that
the length of $\hat \rho$ is uniformly comparable to the
right hand side of the formula in the corollary. 
 
By Proposition \ref{distanceinter2}, there is a number $L>1$
such that a simplicial arc $\tilde\rho$ 
in ${\cal C\cal G}$ constructed from 
$\rho$ as in the proof of Lemma \ref{boundedinelec}
is an $L$-quasi-geodesic in the curve
graph ${\cal C\cal G}$ of $\partial H$.
Lemma \ref{boundedinelec} shows that 
if $\hat \rho$
is an enlargement of $\rho$ then the diameter of the
intersection of $\hat \rho$ with ${\cal E}(c)$ equals
the diameter of $\pi_{\partial H-c}(\gamma(0)\cup \gamma(k))$ up to a universal
multiplicative and additive constant. 
This is what we wanted to show.
\end{proof}

We complete this section with determining the Gromov boundary of the
electrified disc graph of $H$.
To this end let
$H$ be a handlebody of genus $g\geq 2$.
Let ${\cal L}$ be the space of all geodesic
laminations on $\partial H$ equipped with the 
\emph{coarse Hausdorff topology} \cite{H06}.
In this topology, a sequence of laminations
$\lambda_i$ converges to $\lambda$ if every accumulation
point of $(\lambda_i)$ in the usual Hausdorff topology 
for compact subsets of $\partial H$ contains 
$\lambda$ as a sublamination.
Let 
\[{\cal H}\subset {\cal L}\] be the subspace of 
all minimal laminations which fill up $\partial H$, i.e. such 
that complementary components are simply connected,
and which are limits in the coarse Hausdorff topology of 
discbounding simple closed curves.

For an $I$-bundle generator $\gamma$ let 
$\partial{\cal E}(\gamma)\subset {\cal L}$ be the 
set of all 
geodesic laminations which consist of two minimal components
filling up $\partial H-\gamma$ and 
which are limits in 
the coarse Hausdorff topology of boundaries
of discs contained in ${\cal E}(\gamma)$. Each lamination $\mu\in 
{\cal E}(\gamma)$ is invariant under the orientation reversing 
involution $\Phi_\gamma$ of $\partial H$ which fixes $\gamma$ pointwise and
exchanges the 
endpoints of the fibres of the defining $I$-bundle.

Define
\[\partial {\cal E\cal D\cal G}=
\partial{\cal H}
\cup \bigcup_\gamma\partial {\cal E}(\gamma)\subset {\cal L}\]
where the union is over all $I$-bundle generators
$\gamma\subset \partial H$. Then $\partial {\cal E\cal D\cal G}$ is a
${\rm Map}(H)$-space. 

Proposition \ref{boundary} can now be applied to show

\begin{proposition}\label{edgb}
The Gromov boundary of ${\cal E\cal D\cal G}$ can 
naturally be identified with $\partial {\cal E\cal D\cal G}$.
\end{proposition}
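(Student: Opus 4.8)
The plan is to invoke Proposition \ref{boundary} with ${\cal G}={\cal E\cal D\cal G}$ and the family ${\cal H}={\cal E}=\{{\cal E}(\gamma)\mid \gamma\}$ indexed by $I$-bundle generators, whose ${\cal H}$-electrification is ${\cal S\cal D\cal G}_0$. By Corollary \ref{hypfamily} and Lemma \ref{boundedinelec} this family consists of uniformly $\delta$-hyperbolic subgraphs with the bounded penetration property, so Proposition \ref{boundary} gives a canonical identification
\[\partial {\cal E\cal D\cal G}= \partial {\cal S\cal D\cal G}_0\cup \bigcup_\gamma \partial {\cal E}(\gamma)\]
as sets, together with the prescribed neighborhood-basis topology. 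The task is then purely one of translating each of the three pieces of this abstract description into the concrete laminations picture: (i) identify $\partial {\cal S\cal D\cal G}_0$ with $\partial {\cal H}$, (ii) identify each abstract $\partial {\cal E}(\gamma)$ with the lamination set $\partial{\cal E}(\gamma)\subset {\cal L}$ described before the statement, and (iii) check that the coarse Hausdorff topology on the union inside ${\cal L}$ matches the topology supplied by Proposition \ref{boundary}.

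For step (i), the key input is that the vertex inclusion ${\cal S\cal D\cal G}_0\to {\cal C\cal G}(\partial H)$ is a quasi-isometric embedding (the discussion before Lemma \ref{boundedinelec}, via Proposition \ref{distanceinter2}), so its image is a quasi-convex subset of the hyperbolic curve graph and $\partial {\cal S\cal D\cal G}_0$ embeds into $\partial {\cal C\cal G}(\partial H)$, which by \cite{K99,H06} is the space of minimal filling laminations with the coarse Hausdorff topology. A sequence of discbounding curves $D_i$ goes to infinity in ${\cal S\cal D\cal G}_0$ exactly when $d_{{\cal C\cal G}}(\partial D_i,\partial D_j)\to\infty$, and then (after passing to a subsequence) the $\partial D_i$ converge coarsely Hausdorff to a minimal filling lamination which by definition lies in $\partial{\cal H}$; conversely every element of $\partial{\cal H}$ is such a limit. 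This identifies $\partial {\cal S\cal D\cal G}_0\cong\partial{\cal H}$ as sets with matching topologies. For step (ii), I would use Lemma \ref{arcanddisc}: ${\cal E}(\gamma)$ is $2$-quasi-isometric to $C^\prime(F,\gamma)$, which by Lemma \ref{model} is quasi-isometric to the curve graph of $F$; hence $\partial {\cal E}(\gamma)$ is the space of minimal filling laminations on $F$, and pulling back under the $I$-bundle projection (using the involution $\Phi_\gamma$ and the correspondence between arcs on $F$ and $\Phi_\gamma$-invariant curves on $\partial H-\gamma$) gives exactly the set of $\Phi_\gamma$-invariant laminations on $\partial H-\gamma$ with two minimal components that are coarse Hausdorff limits of boundaries of discs in ${\cal E}(\gamma)$, i.e. the set $\partial{\cal E}(\gamma)$ from the statement. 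One must also check these subsets of ${\cal L}$ are pairwise disjoint and disjoint from $\partial{\cal H}$, which follows because a lamination filling $\partial H-\gamma$ is not minimal and filling on all of $\partial H$, and an $I$-bundle generator is determined by a lamination in $\partial{\cal E}(\gamma)$ (as the unique curve it misses), so no lamination lies in two of these sets.

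For step (iii), the neighborhood bases in Proposition \ref{boundary} are phrased in terms of $L$-quasi-geodesics in the electrification ${\cal S\cal D\cal G}_0$ passing near special vertices $v_\gamma$, together with the $p(L)$-neighborhoods in the fibres ${\cal E}(\gamma)$. Using the distance estimate of Corollary \ref{distancesecond} (and Lemma \ref{projectionlarge}), ``the $L$-quasi-geodesic from $x$ to $\xi$ penetrates deeply into $v_\gamma$'' translates into ``$\pi^\gamma$ of the relevant curves has large diameter'', which in turn is exactly the condition that a coarsely Hausdorff convergent sequence of discbounding curves has a subsequence whose Hausdorff limit contains a sublamination filling $\partial H-\gamma$ — i.e. a point of $\partial{\cal E}(\gamma)$ can only be approached through such curves. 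A direct comparison of the two neighborhood systems then shows they generate the same topology, which is the coarse Hausdorff topology restricted to $\partial {\cal E\cal D\cal G}\subset {\cal L}$. The main obstacle I anticipate is step (iii): one has to carry the quasi-geodesic/bounded-penetration bookkeeping of Proposition \ref{boundary} through the quasi-isometries of Lemma \ref{arcanddisc} and Lemma \ref{model}, and check carefully that the \emph{non-Hausdorffness} of the coarse Hausdorff topology is precisely accounted for by the way $\partial{\cal E}(\gamma)$ is glued in as a ``blow-up'' over the point of $\partial{\cal C\cal G}(\partial H)$ corresponding to a lamination missing $\gamma$; the set-theoretic identification is routine, but matching the topologies requires this attention to detail.
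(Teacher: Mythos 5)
Your proposal is correct and follows essentially the same route as the paper: invoke Proposition \ref{boundary} for the set-theoretic identification, use Lemma \ref{arcanddisc} and Lemma \ref{model} together with the Klarreich-type description of curve-graph boundaries to realize each piece inside ${\cal L}$, and match the neighborhood bases. The only cosmetic difference is that the paper packages your disjointness check as an explicit verification that $\partial{\cal E\cal D\cal G}$ is a Hausdorff subspace of ${\cal L}$ (by showing any two distinct boundary laminations intersect transversely), which rests on the same observation you make — that $\gamma$ is the unique minimal lamination missed by an element of $\partial{\cal E}(\gamma)$.
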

\begin{proof}
We show first that
the subspace $\partial {\cal E\cal D\cal G}$ of ${\cal L}$ 
is Hausdorff. 

A point $\lambda\in \partial {\cal E\cal D\cal G}$ 
either is a minimal geodesic
lamination which fills up $\partial H$, or it is a 
geodesic lamination with two minimal components
which fill up $\partial H-\gamma$ for some $I$-bundle generator $\gamma$.
Let $\nu\not=\lambda$ 
be another such lamination. We claim that $\nu$ and $\lambda$
intersect. This means that for some fixed hyperbolic metric on 
$\partial H$, the geodesic representatives of $\nu,\lambda$
intersect transversely.

If either $\nu$ or $\lambda$
fills up $\partial H$ (i.e. if the complementary components
of $\nu,\lambda$ are simply connected) 
then this is obvious. Otherwise $\nu$ fills up
the complement
of an $I$-bundle generator $\gamma$, and $\lambda$
fills up the complement of an $I$-bundle generator $\gamma^\prime$.
Now the simple closed curve 
$\gamma$ is the only minimal geodesic lamination 
which does not intersect $\nu$ and which is
distinct from a component of $\nu$. The lamination
$\lambda$ consists of two minimal components which 
are not simple closed curves and therefore
the geodesic laminations  
$\nu,\lambda$ indeed intersect.

Since $\nu,\lambda\in \partial {\cal E\cal D\cal G}$ intersect,
by the definition of the coarse Hausdorff topology
there are neighborhoods $U$ of $\lambda$, $V$ of $\nu$ 
in ${\cal L}$ so that any
two laminations $\lambda^\prime\in U,\nu^\prime\in V$ 
intersect. In particular,
the neighborhoods $U,V$ are disjoint. This shows that
$\partial {\cal E\cal D\cal G}$ is Hausdorff.

Proposition \ref{boundary} shows that there is a natural
bijection between $\partial {\cal E\cal D\cal G}$ and 
the Gromov boundary of ${\cal E\cal D\cal G}$.
That this bijection is in fact a homeomorphism follows from the
description 
the Gromov boundary of the curve graph of 
$\partial H$ as discussed in \cite{K99,H06} and 
Proposition \ref{boundary}.

To be more precise, let $\gamma$ be a
separating $I$-bundle generator for
$\partial H$. The orientation reversing involution 
$\Phi$ of the $I$-bundle
determined by $\gamma$ restricts to a homeomorphism of 
$\partial H-\gamma$ which exchanges the two components of
$\partial H-\gamma$. By Lemma \ref{model} and Lemma \ref{arcanddisc}, 
the graph ${\cal E}(\gamma)$ can be identfied
with the graph of all simple closed curves $\alpha$ in $X$ which 
intersect $\gamma$ in precisely in two points and are invariant under
$\Phi$. 
Thus 
by \cite{K99,H06}, the Gromov boundary of ${\cal E}(\gamma)$ has a 
natural identification with the space of all $\Phi$-invariant
geodesic laminations which consist of two minimal components, each
of which fills a component of $\partial H-\gamma$. The topology on 
this space is the coarse Hausdorff topology. 
A similar description is valid for the Gromov boundary
of ${\cal E}(\gamma)$ where $\gamma$ is an orientation reversing
$I$-bundle generator. 

Proposition \ref{boundary} shows that the Gromov boundaries of the 
subspaces ${\cal E}(\gamma)$ are embedded subspaces of
the Gromov boundary of 
${\cal E\cal D\cal G}$. The Gromov boundary 
${\cal H}$
of ${\cal S\cal D\cal G}$
is embedded in the Gromov boundary of ${\cal E\cal D\cal G}$ as well. 
For every $\xi\in {\cal H}$, a neighborhood basis of 
$\xi$ in the Gromov boundary of ${\cal E\cal D\cal G}$ consists of 
sets which are unions of a neighborhood of $\xi$ in ${\cal H}$ 
with sets $\partial {\cal E}(\gamma)$ where the curves $\gamma$ are
contained in a neighborhood of $\xi$ in 
${\cal C\cal G}\cup \partial {\cal C\cal G}$. By the description of neighborhood bases
of $\xi$ in ${\cal C\cal G}\cup \partial{\cal C\cal G}$ as neighborhoods of 
$\xi$ in lamination space, equipped with the coarse Hausdorff topology
\cite{K99,H06}, this completes the proof of the proposition.
\end{proof}

\section{Hyperbolicity of the disc graph}

In this section we use Corollary \ref{elect} and 
Theorem \ref{hypback} to give a new and simpler proof of 
the following result of Masur and Schleimer \cite{MS13}.

\begin{theorem}\label{dischyp}
The disc graph of a handlebody is hyperbolic.
\end{theorem}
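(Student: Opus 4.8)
The plan is to deduce hyperbolicity of the disc graph ${\cal D\cal G}={\cal D\cal G}(\partial H)$ by running Theorem \ref{hypback} (equivalently Theorem \ref{hypextension}) once more, this time with ${\cal D\cal G}$ playing the role of ${\cal G}$ and ${\cal E\cal D\cal G}$ playing the role of its ${\cal H}$-electrification. Recall from the discussion after Definition \ref{disccomplex} that ${\cal E\cal D\cal G}$ is obtained from ${\cal D\cal G}$ by adding edges: two discs are joined in ${\cal E\cal D\cal G}$ exactly when there is an essential simple closed curve on $\partial H$ disjoint from both boundaries. So the picture is that ${\cal E\cal D\cal G}$ is a "thinning" of ${\cal D\cal G}$ in the sense of Section 2, where the peripheral subgraphs $H_c$ are the complete subgraphs ${\cal D\cal G}(X)$ spanned by discs whose boundary lies in a fixed thick subsurface $X\subsetneq\partial H$, together with the analogous "$I$-bundle" subgraphs ${\cal E}(\gamma)$ associated to $I$-bundle generators. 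By Theorem \ref{electrified} (via Corollary \ref{elect}) each such proper ${\cal E\cal D\cal G}(X)$ is uniformly hyperbolic, and one then wants to bootstrap: hyperbolicity of the disc graph of the smaller-complexity handlebody-like pieces feeds into hyperbolicity of the next level up. This is the "bottom-up inductive procedure" announced in the introduction.

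Concretely, I would set up an induction on the complexity $\xi(\partial H)$ (genus, or the number of curves needed to cut to pants). The base case is a handlebody of small genus where ${\cal D\cal G}$ coincides with ${\cal S\cal D\cal G}$ or ${\cal E\cal D\cal G}$ and hyperbolicity is already known from Proposition \ref{distanceinter2} or Corollary \ref{elect}. For the inductive step: first identify the peripheral family. For each thick proper subsurface $X$, the subgraph ${\cal D\cal G}(X)$ of discs with boundary in $X$ is complete and connected (Lemma \ref{fellowtravel}); by the inductive hypothesis applied to the relevant lower-complexity data it is $\delta$-hyperbolic with $\delta$ uniform. One must also check the remaining two ingredients of Definition \ref{realtivhyp}: that ${\cal E\cal D\cal G}$ really is (quasi-isometric to) the electrification of ${\cal D\cal G}$ with respect to this family — which is essentially the edge-adding observation above, since a curve disjoint from $\partial D_1\cup\partial D_2$ either already gives disjointness in ${\cal D\cal G}$ or forces both discs into a common ${\cal D\cal G}(X)$ — and that the family has the bounded penetration property. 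The bounded penetration step is where Section 3's machinery and the subsurface-projection estimate Lemma \ref{projectionlarge} come in: two efficient quasi-geodesics in ${\cal E\cal D\cal G}$ with the same endpoints that penetrate deeply into ${\cal D\cal G}(X)$ must have controlled entry and exit points, which is proved by projecting to the curve graph of $X$ (or of $\partial H - X$) exactly as in the proof of Lemma \ref{boundedinelec}.

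Having verified that ${\cal D\cal G}$ is hyperbolic relative to a family of uniformly hyperbolic complete connected subgraphs, Theorem \ref{hypback} immediately gives that ${\cal D\cal G}$ is hyperbolic, with the additional conclusion that enlargements of geodesics in ${\cal E\cal D\cal G}$ are uniform quasi-geodesics in ${\cal D\cal G}$ and the subgraphs ${\cal D\cal G}(X)$ are uniformly quasi-convex. The uniformity of all constants across the induction — the hyperbolicity constant of the ${\cal D\cal G}(X)$, the penetration function $p(L)$, and the quasi-convexity constant — is exactly what makes the induction close, and this is guaranteed precisely because Theorem \ref{hypback} produces effective estimates depending only on the input constants $\delta$ and on the (bounded, by the Proposition after Definition \ref{bcp}) overlap parameter $r$, together with the fact that for a fixed handlebody there are only finitely many topological types of thick subsurface.

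The main obstacle, as I see it, is the bounded penetration property for the family of ${\cal D\cal G}(X)$'s, rather than hyperbolicity of the individual pieces (which is handed to us). The difficulty is that a quasi-geodesic in ${\cal E\cal D\cal G}$ can pass through ${\cal D\cal G}(X)$ via the added "electrified" edges, and one must show that the point where it enters and leaves $X$ is coarsely well-defined across all choices of efficient quasi-geodesic. This requires a Masur–Minsky style argument: deep penetration into ${\cal D\cal G}(X)$ corresponds to large subsurface projection to $X$ (and to its complement), large projection forces \emph{every} quasi-geodesic to pass through the same region by Lemma \ref{projectionlarge}, and the entry/exit discs are then pinned down up to bounded error by combining this with the Lipschitz control of Lemma \ref{Lipschitz} and the distance estimate of Corollary \ref{distancesecond}. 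One subtlety to handle carefully is the interaction between the subsurface $X$ and the $I$-bundle generators $\gamma$ inside it — i.e. making sure the two layers of electrification (the ${\cal E}(\gamma)$ inside ${\cal E\cal D\cal G}(X)$, and the ${\cal D\cal G}(X)$ inside ${\cal D\cal G}$) are handled consistently — but this is precisely the content of the "inductive" nature of the argument and is organized by the complexity induction.
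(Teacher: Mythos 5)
Your high-level plan --- a bottom-up induction driven by Theorem \ref{hypback}, with bounded penetration verified through subsurface projections as in Lemma \ref{projectionlarge} and Lemma \ref{boundedinelec} --- is indeed the strategy the paper announces and follows. But the concrete top-level setup you propose has a genuine gap. You want to apply Theorem \ref{hypback} once with ${\cal G}={\cal D\cal G}$, ${\cal E\cal G}={\cal E\cal D\cal G}$, and peripheral family $\{{\cal D\cal G}(X)\}$ for $X$ ranging over proper thick (or visible) subsurfaces. This family cannot satisfy the hypotheses of the theorem: it is not bounded. Two distinct subsurfaces $X=\partial H-\gamma$ and $X'=\partial H-\gamma'$ with $\gamma,\gamma'$ disjoint give peripheral subgraphs whose intersection contains the full disc graph ${\cal D\cal G}(\partial H-\gamma-\gamma')$, which has infinite diameter in the \emph{intrinsic} metric of ${\cal D\cal G}(X)$. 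By the Proposition following Definition \ref{realtivhyp}, ${\cal D\cal G}$ is therefore not hyperbolic relative to this family, and the bounded penetration property fails along with it (a quasi-geodesic can enter and leave ${\cal D\cal G}(X)$ at wildly different discs that happen to lie in a smaller common subsurface). A second, related problem: the electrification of ${\cal D\cal G}$ by this family collapses each ${\cal D\cal G}(X)$ to diameter two, which is a far more drastic operation than passing to ${\cal E\cal D\cal G}$; an edge of ${\cal E\cal D\cal G}$ only records disjointness from a \emph{single} curve, so one application of Theorem \ref{hypback} can only ``un-electrify'' one curve at a time. (Also, the subgraphs ${\cal E}(\gamma)$ do not belong in the peripheral family at this level --- their vertices intersect $\gamma$, so they are peripherals for the passage from ${\cal S\cal D\cal G}$ down to ${\cal E\cal D\cal G}$, not from ${\cal E\cal D\cal G}$ down to ${\cal D\cal G}$.)

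The paper circumvents both problems by interpolating: it introduces the graphs ${\cal E\cal D\cal G}(k)$ on the \emph{same} vertex set (all discs), where an edge requires disjointness from a multicurve with at least $k$ components, so that ${\cal E\cal D\cal G}(1)={\cal E\cal D\cal G}$ and ${\cal E\cal D\cal G}(3g-3)={\cal D\cal G}$, and inducts on $k$. At each stage the peripheral subgraphs are \emph{partially electrified} disc graphs of complements of admissible curves --- by Lemma \ref{hise} the subgraph ${\cal H}(\gamma)$ is isometric to ${\cal E\cal D\cal G}(X-\gamma)$, not to ${\cal D\cal G}(X-\gamma)$. This is the decisive point: the intrinsic metric of ${\cal E\cal D\cal G}(X-\gamma)$ collapses the overlap with any other peripheral to diameter one, restoring boundedness, and the uniform hyperbolicity of these peripherals comes from the already-established Corollary \ref{elect} rather than from the inductive hypothesis about disc graphs. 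Your induction ``on the complexity of $\partial H$'' with full disc graphs of subsurfaces as peripherals does not close for the reason above, and ${\cal D\cal G}(X)$ for $X\subsetneq\partial H$ is in any case not the disc graph of a lower-genus handlebody, so the inductive hypothesis as you state it does not literally apply to it. To repair the argument you would need to replace your peripheral family by the one-curve-at-a-time hierarchy, which is exactly the content of the paper's ${\cal E\cal D\cal G}(k)$ construction, Lemma \ref{fiveholedsphere} (the base of the induction), Lemma \ref{helect} and Corollary \ref{secondlevel2}.
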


The argument consists in an inductive application
of Theorem \ref{hypback} to electrified disc graphs of 
thick subsurfaces of $\partial H$. 
For technical reason we slightly weaken the definition
of a thick subsurface of $\partial H$ as follows.

Define a connected properly embedded
subsurface $X$ of $\partial H$ to be \emph{visible} if every
discs intersects $X$ 
and if moreover $X$ contains the boundary of at least one disc.
Thus a thick subsurface is visible, but a visible subsurface
may not be filled by boundaries of discs and hence may not be thick.
Note that if $X$ is visible then the electrified disc
graph ${\cal E\cal D\cal G}(X)$ of $X$ is defined. However, 
if $X$ is not thick then its 
diameter equals one.

Let ${\cal D\cal G}(X)$ be the disc graph of the visible
subsurface $X$. Its vertices are discs with boundary in
$X$, and two such discs are connected by an edge of length one
if they are disjoint.
The next lemma shows that 
if $X$ is a 
visible five holed sphere or two holed torus then 
${\cal D\cal G}(X)$ is hyperbolic.

\begin{lemma}\label{fiveholedsphere}
Let $X\subset \partial H$ be a visible subsurface which
is a five-holed sphere or a two-holed torus.
Then the vertex inclusion ${\cal D\cal G}(X)\to 
{\cal E\cal D\cal G}(X)$ is a quasi-isometry.
\end{lemma}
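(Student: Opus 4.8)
The plan is as follows. The vertex inclusion ${\cal D\cal G}(X)\to{\cal E\cal D\cal G}(X)$ is a bijection on vertex sets and is $1$-Lipschitz, since ${\cal E\cal D\cal G}(X)$ is obtained from ${\cal D\cal G}(X)$ by adding edges; thus $d_{{\cal E},X}\le d_{{\cal D},X}$, and it remains to bound $d_{{\cal D},X}$ from above in terms of $d_{{\cal E},X}$. Concatenating estimates along a $d_{{\cal E},X}$-geodesic, it suffices to bound $d_{{\cal D},X}(D,E)$ whenever $D$ and $E$ are joined by an edge of ${\cal E\cal D\cal G}(X)$. In fact I will show that $d_{{\cal D},X}(D,E)\le 2$ for every such pair, which immediately yields that the vertex inclusion is a quasi-isometry.

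So let $D\neq E$ be joined by an edge of ${\cal E\cal D\cal G}(X)$ and fix an essential simple closed curve $\alpha\subset X$ disjoint from $\partial D\cup\partial E$. If $\alpha$ bounds a disc $D_\alpha$ in $H$, then $D_\alpha$ is disjoint from both $D$ and $E$, so $d_{{\cal D},X}(D,E)\le 2$; hence I may assume that $\alpha$ is not discbounding. The key topological observation is that, because $X$ has complexity two (it is a five-holed sphere or a two-holed torus), the surface $X\setminus\alpha$ has exactly one component $W$ of positive complexity, and $W$ is a four-holed sphere or a one-holed torus; any remaining component is a three-holed sphere and carries no essential curve. Since $\partial D$ and $\partial E$ are essential in $X$ and disjoint from $\alpha$, and since they are discbounding and hence neither isotopic to $\alpha$ nor peripheral in $X$, both curves lie in $W$ as essential curves; as $W$ has complexity one, the two distinct curves $\partial D$ and $\partial E$ necessarily intersect and fill $W$.

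Consequently $W$ is a four-holed sphere or a one-holed torus which is filled by boundaries of discs, i.e.\ it satisfies the second defining property of a thick subsurface. By Remark \ref{fourhole} such a surface cannot be thick, so $W$ must fail the first property: there is a disc $D_0$ in $H$ which is disjoint from $W$. Then $\partial D_0$ can be isotoped off the interior of $W$, hence off the curves $\partial D$ and $\partial E$, so $D_0$ is disjoint from both $D$ and $E$, and again $d_{{\cal D},X}(D,E)\le 2$.

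The only substantive ingredients are the elementary splitting of $X\setminus\alpha$, which is pure Euler-characteristic bookkeeping and works precisely because $X$ has complexity two, and the appeal to Remark \ref{fourhole}, which rules out the a priori possibility that the complexity-one surface $W$ carries infinitely many disc boundaries. I expect the main point to be recognizing that this last reduction applies: once $\partial D\cup\partial E$ is known to fill $W$, the dichotomy "$\alpha$ is discbounding" versus "$W$ admits a disjoint disc" leaves no room for $D$ and $E$ to lie far apart in ${\cal D\cal G}(X)$. Combining the bound $d_{{\cal D},X}\le 2\,d_{{\cal E},X}$ with the trivial reverse inequality $d_{{\cal E},X}\le d_{{\cal D},X}$ then shows that the vertex inclusion ${\cal D\cal G}(X)\to{\cal E\cal D\cal G}(X)$ is a quasi-isometry.
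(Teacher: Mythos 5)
Your overall strategy coincides with the paper's: reduce to a single edge $\{D,E\}$ of ${\cal E\cal D\cal G}(X)$ with a witness curve $\alpha$, dispose of the case where $\alpha$ is discbounding by inserting the disc it bounds, and otherwise locate $\partial D,\partial E$ in the unique positive-complexity component $W$ of $X-\alpha$ (a four-holed sphere or one-holed torus) and appeal to Remark \ref{fourhole}. The topological bookkeeping up to the point where $\partial D\cup\partial E$ fills $W$ is correct.

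The gap is in the last step. From ``$W$ satisfies property (2) but is not thick'' you conclude that property (1) fails, i.e.\ that some disc $D_0$ misses $W$, and you then use $D_0$ as the midpoint of a length-two path in ${\cal D\cal G}(X)$. But a vertex of ${\cal D\cal G}(X)$ must be a disc whose boundary is contained in $X$, and nothing guarantees $\partial D_0\subset X$; in fact no disc disjoint from $W$ can have boundary in $X$, since such a boundary would lie in the three-holed sphere component of $X-\alpha$ and hence be isotopic to $\alpha$ or to a component of $\partial X$, and neither of these is discbounding ($\alpha$ by assumption, the components of $\partial X$ because $X$ is visible). So $D,D_0,E$ is never a path in ${\cal D\cal G}(X)$. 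The correct way to finish, and what the paper does, is to run the surgery of Remark \ref{fourhole} inside $W$ directly: surgery of $D$ at an outer component of $E-D$ yields an essential disc $D'$ with $\partial D'\subset W$, disjoint from $D$ and with strictly smaller intersection with $\partial E$; since $W$ has complexity one, $\partial D'$ must be peripheral in $W$, i.e.\ isotopic to $\alpha$ or to a component of $\partial X$, which is impossible for the reasons just given. Hence this case is vacuous: $W$ carries the boundary of at most one disc, so $D$ and $E$ were already disjoint and the edge lies in ${\cal D\cal G}(X)$ with no intermediate vertex needed. With that correction your reduction yields $d_{{\cal D},X}\le 2\,d_{{\cal E},X}$ exactly as in the paper.
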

\begin{proof} Let $X\subset \partial H$ be a visible subsurface. 
Let $\rho:[0,k]\to {\cal E\cal D\cal G}(X)$ be
a geodesic. By modifying $\rho$ while increasing
its length by at most a factor of two we may assume that
for each $i$, either $\rho(i)$ is disjoint from
$\rho(i+1)$, or there is an essential simple closed curve
in $X$ 
which is not discbounding and which is 
disjoint from both $\rho(i),\rho(i+1)$, but there is no
discbounding curve in $X$ 
disjoint from both $\rho(i),\rho(i+1)$.

If $X$ is a five-holed sphere then every simple closed curve
$\gamma$ in $X$ is separating, and $X-\gamma$ is the disjoint 
union of a four holed
sphere $X_1$ and a three holed sphere. Any two essential 
simple closed curves $\alpha,\beta$ 
in $X$ which are disjoint from $\gamma$ are contained in $X_1$.
If $\gamma\subset X$ is not discbounding 
then $X_1$ is a 
four holed sphere whose boundary components
are not discbounding. 
If $\gamma$ is disjoint
from a discbounding simple closed curve then $X_1$ contains the 
boundary of a disc. 
By Remark \ref{fourhole}, $X_1$ contains 
the boundary of precisely one disc.
This implies that for all $i$ the disc
$\rho(i)$ is disjoint from $\rho(i+1)$ and 
therefore $\rho$ is in fact a simplicial path in 
${\cal D\cal G}(X)$.

Similarly, if $X$ is a two-holed 
torus then a simple closed curve $\gamma$
in $X$ either is non-separating and  $X-\gamma$ is a four-holed sphere, 
or $\gamma$ is separating and $\gamma$ decomposes $X$ into a three-holed sphere
and a one-holed torus. Using again Remark \ref{fourhole}, 
a one-holed torus whose boundary is not discbounding contains
the boundary of at most one disc. Thus 
the argument in the previous paragraph is
valid in this situation as well and shows the lemma.  
\end{proof}

From now on let $X$ be a visible subsurface of $\partial H$
which is not a sphere with at most five holes or a 
torus with at most two holes. 
Our next goal is to 
show hyperbolicity of a graph ${\cal E\cal D\cal G}(2,X)$
whose vertices are isotppy classes of essential discs with 
boundary in $X$, and which can be obtained from 
${\cal D\cal G}(X)$ by adding edges and can be 
obtained from ${\cal E\cal D\cal G}(X)$ by removing edges.
Namely, 
two discs $D,E$ are connected in ${\cal E\cal D\cal G}(2)$ 
by an edge of length
one if either $D,E$ are disjoint or if
$\partial D,\partial E$ are disjoint from an essential multicurve
$\beta\subset \partial X$ consisting of two components
which are not freely homotopic.

Call a simple closed curve  $\gamma$
in $X$ \emph{admissible} if  $\gamma$ has the following properties.
\begin{enumerate}
\item  $\gamma$ is neither discbounding nor discbusting. 
\item Either $\gamma$  is non-separating 
or $\gamma$ decomposes $X$ into 
a three-holed sphere $X_1$ and a visible
second component $X_2$. 
\end{enumerate}

By assumption, $X$ is distinct from a sphere with at
most five holes and a torus with at most two holes. We claim that  
if $\gamma\subset X$ is an admissible simple closed curve and 
if $\eta$ is any other simple closed curve then a tubular neighborhood 
of $\gamma\cup \eta$ contains an essential simple closed curve which is
disjoint from $\gamma$. 

To see this observe that if $\gamma\cap \eta=\emptyset$ then we may choose 
$\eta$ to be such a curve. 
If $\gamma\cap \eta\not=\emptyset$ then 
let $\eta_0$ be a component of $\eta-\gamma$. In the case that $\gamma$ 
is separating we require that $\eta_0$  
is not contained in the 
three-holed sphere component of $X-\gamma$.
Then $\eta_0$ is contained in a component of $X-\gamma$ which 
neither is a three holed sphere nor a one-holed torus.
As a consequence, 
one of the boundary components of a tubular neighborhood of 
$\gamma\cup \eta_0$ is an 
essential simple closed curve in $X$ distinct from $\gamma$.

For an admissible simple closed curve $\gamma$ in $X$ 
define ${\cal H}(\gamma)$
to be the complete subgraph of ${\cal E\cal D\cal G}(2,X)$
whose vertex set consists of all discs which are disjoint from $\gamma$.

\begin{lemma}\label{hise}
${\cal H}(\gamma)$ is isometric to ${\cal E\cal D\cal G}(X-\gamma)$.
\end{lemma}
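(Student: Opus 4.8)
The plan is to show that the identity on isotopy classes of discs induces a graph isomorphism between ${\cal H}(\gamma)$ and ${\cal E\cal D\cal G}(X-\gamma)$; since both are metric graphs with all edges of length one, this is an isometry. First I would check that the two graphs have the same vertex set. A disc $D$ is a vertex of ${\cal H}(\gamma)$ exactly when $\partial D$ can be represented disjointly from $\gamma$, i.e. when $\partial D$ can be isotoped into $X-\gamma$. As $\gamma$ is admissible it is not discbounding, so no such $\partial D$ is isotopic to $\gamma$; and if $\gamma$ separates $X$, one of the two components of $X-\gamma$ is a three-holed sphere, which carries no essential simple closed curve, so such a $\partial D$ in fact lies in the visible second component. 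Together with the elementary remark that a simple closed curve in $X-\gamma$ is essential there precisely when it is essential in $X$ and not isotopic to $\gamma$ (the inclusion $X-\gamma\hookrightarrow X$ being $\pi_1$-injective), this identifies the vertices of ${\cal E\cal D\cal G}(X-\gamma)$ with those of ${\cal H}(\gamma)$.

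Next I would verify that the edge relations coincide. In one direction, if $D$ and $E$ are joined in ${\cal E\cal D\cal G}(X-\gamma)$ there is an essential simple closed curve $\delta\subset X-\gamma$ disjoint from $\partial D\cup\partial E$; then $\gamma\cup\delta$ is an essential two-component multicurve in $X$ whose components are not freely homotopic (as $\delta\not\simeq\gamma$) and which is disjoint from $\partial D\cup\partial E$ (since $\gamma$ is, $D,E$ being vertices of ${\cal H}(\gamma)$, and $\delta$ is by choice), so $D$ and $E$ are joined in ${\cal E\cal D\cal G}(2,X)$ and hence in ${\cal H}(\gamma)$. In the other direction, if $D$ and $E$ are joined in ${\cal H}(\gamma)$ and are disjoint, a parallel copy of $\partial D$ is an essential curve of $X-\gamma$ missing $\partial D\cup\partial E$; if instead they are joined via an essential two-component multicurve $\beta_1\cup\beta_2$ in $X$ with $\beta_1,\beta_2$ not freely homotopic and disjoint from $\partial D\cup\partial E$, I would pick the component, say $\beta_1$, that is not isotopic to $\gamma$ (one of them must fail to be, since $\beta_1\not\simeq\beta_2$) and apply the observation established just before the lemma: using that $\gamma$ is admissible, a regular neighborhood of $\gamma\cup\beta_1$ — or of $\gamma\cup\eta_0$ for an appropriate component $\eta_0$ of $\beta_1-\gamma$ if $\beta_1$ meets $\gamma$ — contains a curve $\delta$ essential in $X$, disjoint from $\gamma$ and not isotopic to $\gamma$, hence essential in $X-\gamma$. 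Since $\gamma$ and $\beta_1$ are both disjoint from $\partial D\cup\partial E$, this neighborhood, and with it $\delta$, can be taken disjoint from $\partial D\cup\partial E$, so $D$ and $E$ are joined in ${\cal E\cal D\cal G}(X-\gamma)$.

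I expect the crux to be this last step: extracting, from the two-component multicurve certifying an edge of ${\cal H}(\gamma)$, an essential curve of $X-\gamma$ that simultaneously avoids $\gamma$ and $\partial D\cup\partial E$. This is exactly what admissibility of $\gamma$ is designed to provide through the preceding observation, the point being that the curve it produces sits in an arbitrarily small neighborhood of the union of $\gamma$ with a curve already disjoint from $\partial D\cup\partial E$. The remaining ingredients — the vertex-set identification and the fact that essentiality in $X-\gamma$ amounts to essentiality in $X$ together with non-parallelism to $\gamma$ — are routine.
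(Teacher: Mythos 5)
Your proposal is correct and follows essentially the same route as the paper: identify vertices via the tautological correspondence, show an edge of ${\cal E\cal D\cal G}(X-\gamma)$ yields the two-component multicurve $\gamma\cup\delta$ certifying an edge of ${\cal H}(\gamma)$, and in the converse direction invoke the observation preceding the lemma to extract from $\gamma\cup\beta_1$ an essential curve of $X-\gamma$ avoiding $\partial D\cup\partial E$. You are in fact slightly more careful than the paper in spelling out the vertex-set identification and in selecting the component $\beta_i$ not isotopic to $\gamma$, but the substance is identical.
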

\begin{proof} 
A disc $D\in {\cal H}(\gamma)$ is disjoint from $\gamma$.
Thus $D$ defines a vertex in ${\cal E\cal D\cal G}(X-\gamma)$.
Two discs $D,E\in {\cal H}(\gamma)$ are connected by an edge in 
${\cal E\cal D\cal G}(2,X)$ if and only if either they are disjoint or if 
there is a pair $\beta_1,\beta_2$ of disjoint 
essential simple closed curves in $X$ 
which are disjoint from both $D,E$. 

If one of the curves $\beta_1,\beta_2$, say the curve
$\beta_1$, is disjoint from $\gamma$, then by definition, 
$D,E$ viewed as vertices in ${\cal E\cal D\cal G}(X-\gamma)$ 
are connected by an edge.  Otherwise by the remark
preceding the lemma, there is an essential
simple closed curve contained in a tubular neighborhood of 
$\gamma\cup \beta_1$ which is disjoint from $\gamma,D,E$ 
and once again,
$D,E$ are connected by an edge in ${\cal E\cal D\cal G}(X-\gamma)$.

As a consequence, the vertex inclusion ${\cal H}(\gamma)\to 
{\cal E\cal D\cal G}(X-\gamma)$ extends to a $1$-Lipschitz embedding.
By definition, this embedding is surjective on vertices. By definition, 
any two vertices which are connected in 
${\cal E\cal D\cal G}(X-\gamma)$ by an edge are also connected
in ${\cal H}(c)$ by an edge. In other words,
the $1$-Lipschitz embedding ${\cal H}(\gamma)\to 
{\cal E\cal D\cal G}(X-\gamma)$ is in fact an isometry.
\end{proof}

Lemma \ref{hise} and Corollary \ref{elect} imply that 
there is a number $\delta >0$ so that
each of the graphs ${\cal H}(\gamma)$ is $\delta$-hyperbolic.

Let ${\cal H}=\{{\cal H}(\gamma)\mid \gamma\}$ 
be the family of all these subgraphs
of ${\cal E\cal D\cal G}(2,X)$ where $\gamma$ 
passes through all admissible curves in 
$X$. Our goal is to apply Theorem \ref{hypextension} to the family
${\cal H}$ of complete subgraphs of ${\cal E\cal D\cal G}(2,X)$.
We first note.

\begin{lemma}\label{helect}
${\cal E\cal D\cal G}(X)$ is quasi-isometric to the
${\cal H}$-electrification of ${\cal E\cal D\cal G}(2,X)$.
\end{lemma}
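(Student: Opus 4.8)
The plan is to write $\Gamma$ for the ${\cal H}$-electrification of ${\cal E\cal D\cal G}(2,X)$ and to show that the inclusion of the set $\mathcal V$ of isotopy classes of essential discs with boundary in $X$ into $\Gamma$ is a quasi-isometry. First I would note that $\mathcal V$ is also the vertex set of ${\cal E\cal D\cal G}(X)$, and that $\mathcal V$ is $1$-dense in $\Gamma$: each cone point $v_\gamma$ (for $\gamma$ admissible) is joined by an edge to every vertex of ${\cal H}(\gamma)$, and ${\cal H}(\gamma)\neq\emptyset$ since an admissible curve is not discbusting. Hence it suffices to compare the two induced path metrics on $\mathcal V$.

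For the inequality $d_{{\cal E},X}\leq d_\Gamma$ on $\mathcal V$ I would use that ${\cal E\cal D\cal G}(2,X)$ is a subgraph of ${\cal E\cal D\cal G}(X)$ (it arises from the latter by deleting edges), and that any length-two detour $D-v_\gamma-E$ through a cone point records that $\partial D$ and $\partial E$ are both disjoint from the essential simple closed curve $\gamma\subset X$, hence that $D,E$ span an edge of ${\cal E\cal D\cal G}(X)$. Replacing each such detour by that single edge, and keeping all ${\cal E\cal D\cal G}(2,X)$-edges, turns a $\Gamma$-path between points of $\mathcal V$ into an ${\cal E\cal D\cal G}(X)$-path of no greater length.

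The substantive direction is $d_\Gamma\leq 2\,d_{{\cal E},X}$, and I expect this to be the main obstacle. It suffices to show that if $D,E\in\mathcal V$ span an edge of ${\cal E\cal D\cal G}(X)$, witnessed by an essential simple closed curve $\alpha\subset X$ with $\alpha\cap(\partial D\cup\partial E)=\emptyset$, then $d_\Gamma(D,E)\leq 2$. If $D$ and $E$ are disjoint, or if $\alpha$ bounds a disc $D_\alpha$, I would route through ${\cal E\cal D\cal G}(2,X)$, in the second case via $D_\alpha$, which is then disjoint from both $D$ and $E$. Otherwise $\alpha$ is not discbounding, and it cannot be discbusting because it misses the disc $D$; so if $\alpha$ is non-separating, or separates off a pair of pants from a visible complement, then $\alpha$ is admissible and $D-v_\alpha-E$ has length two in $\Gamma$. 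The remaining case --- $\alpha$ separating with $X\smallsetminus\alpha=X_1\sqcup X_2$ not of that form --- is the crux: since $\partial D$ meets $\partial E$ while both miss $\alpha$, they lie in a common component $X_1$, which is then not a pair of pants (it carries the essential, non-peripheral curve $\partial D$; here one uses $\partial D\not\simeq\alpha$, as $\alpha$ is not discbounding, and that $\partial D$ is non-peripheral in $X$ since otherwise $D$ and $E$ would be disjoint); and $X_2$ is neither an annulus (as $\alpha$ is essential in $X$) nor a pair of pants (otherwise $X_1$ would fail to be visible, producing a disc whose boundary lies in the pants $\overline{X_2}$ and is therefore isotopic to $\alpha$, contradicting that $\alpha$ is not discbounding). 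Hence $X_2$ contains an essential non-peripheral simple closed curve $\alpha'$, which is essential in $X$ and not isotopic to $\alpha$, so $\{\alpha,\alpha'\}$ is a two-component essential multicurve disjoint from $\partial D\cup\partial E$ and $D,E$ already span an edge of ${\cal E\cal D\cal G}(2,X)$. Combining the two inequalities with the $1$-density of $\mathcal V$ in $\Gamma$ yields the asserted quasi-isometry. I anticipate that the only delicate points are the repeated verifications that a curve essential and non-peripheral in a subsurface $X_i$ remains essential (and suitably non-isotopic) in $X$, and the enumeration of the low-complexity topological types of the $X_i$ that have to be excluded.
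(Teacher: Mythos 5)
Your argument is correct and follows essentially the same route as the paper: both directions compare the two metrics on the common vertex set, the coarse-Lipschitz direction proceeds by the same case split on the witnessing curve $\alpha$ (disjoint discs, discbounding, admissible, or separating non-admissible, in which last case one finds a second disjoint curve in the component not containing $\partial D\cup\partial E$ to produce a two-component multicurve), and the reverse direction collapses cone-point detours to single edges of ${\cal E\cal D\cal G}(X)$. Your extra verification that the component $X_2$ cannot be a three-holed sphere addresses a point the paper simply asserts, so no essential difference in method.
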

\begin{proof} Let ${\cal G}$ be the ${\cal H}$-electrification of 
${\cal E\cal D\cal G}(2,X)$. We show first that the vertex inclusion
${\cal E\cal D\cal G}(X)\to {\cal G}$ is coarsely Lipschitz.

To this end 
let $D,E$ are any two vertices in ${\cal E\cal D\cal G}(X)$ which
are connected by an edge. Then either $D,E$ are disjoint,
of they are disjoint from a common essential 
simple closed curve $\gamma$ in $X$.

If $D,E$ are disjoint then $D,E$ viewed as vertices
in ${\cal E\cal D\cal G}(2,X)$ 
are connected by an edge in ${\cal E\cal D\cal G}(2,X)$ as well.

Now assume that $D,E$ are disjoint from 
a common essential simple closed curve $\gamma$ in $X$.
If $\gamma$ either is admissible or discbounding, 
then by the definition of the ${\cal H}$-electrification
of ${\cal E\cal D\cal G}(2,X)$, 
their distance in ${\cal G}$ is at most two.
On the other hand, if  $\gamma$ is 
neither admissible nor discbounding, 
then $\gamma$ is a separating simple closed curve in $X$.
The surface $X-\gamma$ is a disjoint union of essential 
surfaces $X_1,X_2$ which are distinct from three-holed spheres.
The boundaries of $D,E$ are contained in $X_1\cup X_2$.

If $\partial D,\partial E$ are contained in distinct
components of $X-\gamma$ then $D,E$ are disjoint and hence
$D,E$ are connected by an edge in ${\cal E\cal D\cal G}(2,X)$.
If $\partial D,\partial E$ are contained 
in the same component of $X-\gamma$, say
in $X_1$, then the second component $X_2$ contains an essential
simple closed curve $\eta$, and 
$\partial D,\partial E$ are disjoint from
the multi-curve $\gamma\cup \eta$ with two components.
Once more, this implies that 
$D,E$ are connected in ${\cal E\cal D\cal G}(2,X)$ by an edge.
As a consequence, the vertex inclusion 
${\cal E\cal D\cal G}(X)\to {\cal G}$ is indeed coarsely Lipschitz.

That this inclusion is in fact a quasi-isometry is immediate from the 
definitions. Namely, if $\gamma\subset X$ 
is admissible then by the definition of
${\cal E\cal D\cal G}(X)$, any two vertices in 
${\cal H}(\gamma)$ are connected in 
${\cal E\cal D\cal G}(X)$ by an edge. 
\end{proof}

We use Lemma \ref{helect} and Theorem \ref{hypextension} 
to show hyperbolicity 
of ${\cal E\cal D\cal G}(2,X)$.

\begin{corollary}\label{secondlevel2}
${\cal E\cal D\cal G}(2,X)$ is hyperbolic.
Enlargements of geodesics in ${\cal E\cal D\cal G}(X)$
are uniform quasi-geodesics in ${\cal E\cal D\cal G}(2,X)$.
\end{corollary}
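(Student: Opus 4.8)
The plan is to apply Theorem~\ref{hypextension} to the metric graph ${\cal E\cal D\cal G}(2,X)$ together with the family ${\cal H}=\{{\cal H}(\gamma)\mid \gamma\text{ admissible}\}$ of complete connected subgraphs. Three inputs are needed. First, each ${\cal H}(\gamma)$ is $\delta$-hyperbolic for one fixed $\delta>0$: by Lemma~\ref{hise} it is isometric to ${\cal E\cal D\cal G}(X-\gamma)$, and $X-\gamma$ is a visible subsurface of $\partial H$, so either it is thick, in which case Corollary~\ref{elect} applies with a constant independent of the subsurface, or it is not thick, in which case ${\cal E\cal D\cal G}(X-\gamma)$ has diameter one. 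Second, the ${\cal H}$-electrification of ${\cal E\cal D\cal G}(2,X)$ is hyperbolic: by Lemma~\ref{helect} it is quasi-isometric to ${\cal E\cal D\cal G}(X)$, which is hyperbolic by Corollary~\ref{elect}. Third, ${\cal H}$ has the bounded penetration property; this is the only point requiring genuine work. Granting these, Theorem~\ref{hypextension} yields that ${\cal E\cal D\cal G}(2,X)$ is hyperbolic, and, after identifying the ${\cal H}$-electrification with ${\cal E\cal D\cal G}(X)$ via Lemma~\ref{helect} and using Lemma~\ref{Lipschitz} to pass between enlargements of quasi-geodesics and of geodesics with the same endpoints, the assertion about enlargements follows from the corresponding statement in Theorem~\ref{hypextension}.

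To establish the bounded penetration property I would argue as in the proof of Lemma~\ref{boundedinelec}, with $I$-bundle generators replaced by admissible curves and the subgraphs ${\cal E}(\gamma)$ replaced by ${\cal H}(\gamma)\cong{\cal E\cal D\cal G}(X-\gamma)$. Write ${\cal G}$ for the ${\cal H}$-electrification of ${\cal E\cal D\cal G}(2,X)$, with special vertices $v_\gamma$. Via the quasi-isometry ${\cal G}\to{\cal E\cal D\cal G}(X)$ of Lemma~\ref{helect} and the quasi-isometric embedding of the ${\cal E}$-electrification ${\cal S\cal D\cal G}_0(X)$ of ${\cal E\cal D\cal G}(X)$ into the curve graph ${\cal C\cal G}(X)$ from Proposition~\ref{distanceinter2}, an efficient $L$-quasi-geodesic $\rho$ in ${\cal G}$ gives rise to a uniform quasi-geodesic $\tilde\rho$ in ${\cal C\cal G}(X)$, and a penetration of $\rho$ through a special vertex $v_\gamma$ corresponds to $\tilde\rho$ passing within bounded distance of the curve $\gamma$ --- because, up to an annular neighbourhood of $\gamma$ and, when $\gamma$ separates, a three-holed sphere component carrying no subsurface projection, the subsurface $X-\gamma$ is all of $X$. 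Lemma~\ref{projectionlarge}, applied to the proper subsurface $X-\gamma$ of $X$, then shows that once ${\rm diam}\,\pi^{X-\gamma}(\rho(0)\cup\rho(n))$ exceeds a threshold depending only on $L$, every efficient $L$-quasi-geodesic in ${\cal G}$ with the same endpoints passes through $v_\gamma$, and the entry and exit points along any two such quasi-geodesics have $\pi^{X-\gamma}$-images within bounded distance of those of the common endpoints, hence within bounded distance of one another. The distance estimate of Corollary~\ref{distancesecond}, valid for the electrified disc graph of a visible subsurface such as $X-\gamma$ by the same proof, converts this into the bound on distances inside ${\cal H}(\gamma)$ demanded by Definition~\ref{bcp}; the separating and non-separating cases are organised exactly as in Lemma~\ref{boundedinelec}, and the reverse implication --- deep penetration forces large projection --- is obtained by the same inversion, noting (again via Lemma~\ref{projectionlarge}) that an efficient quasi-geodesic passing through $v_\gamma$ has its initial segment projecting boundedly into $X-\gamma$.

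The heart of the matter, and the main obstacle, is exactly this equivalence between ``penetrating ${\cal H}(\gamma)$ deeply'' and ``having a large subsurface projection to $X-\gamma$'', uniform in the admissible curve $\gamma$ and stable under replacing one efficient quasi-geodesic by another with the same endpoints. The delicacy is that ${\cal E\cal D\cal G}(X)$ is only quasi-isometric to ${\cal C\cal G}(X)$ after the further electrification of the $I$-bundle subgraphs ${\cal E}(\eta)$, so one is dealing with two nested layers of electrification at once: a penetration of ${\cal H}(\gamma)={\cal E\cal D\cal G}(X-\gamma)$ is measured by the distance estimate of Corollary~\ref{distancesecond} \emph{for the subsurface $X-\gamma$}, whose leading term is $d_{{\cal C\cal G}(X-\gamma)}$ and whose remaining terms involve $I$-bundle generators inside $X-\gamma$, and one must invoke the bounded penetration property already proved for those inner subgraphs (Lemma~\ref{boundedinelec}) to reconcile the two scales. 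Finally, one should record the bookkeeping remark that a visible subsurface of $\partial H$ which is not thick has an electrified disc graph of diameter one, so that Corollary~\ref{elect} indeed supplies uniform hyperbolicity for all the subgraphs ${\cal H}(\gamma)$ that arise here, with no additional induction required at this stage.
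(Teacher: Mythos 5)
Your proposal is correct and follows essentially the same route as the paper: apply Theorem~\ref{hypextension} to the family ${\cal H}=\{{\cal H}(\gamma)\}$, get uniform hyperbolicity of the pieces from Lemma~\ref{hise} and Corollary~\ref{elect}, identify the ${\cal H}$-electrification with ${\cal E\cal D\cal G}(X)$ via Lemma~\ref{helect}, and verify bounded penetration by the subsurface-projection argument of Lemma~\ref{boundedinelec}. Your two-scale analysis of deep penetration of ${\cal H}(\gamma)$ (large projection to $X-\gamma$ at the curve-graph level, or large projection to the complement of an $I$-bundle generator inside it) is exactly the paper's two-case characterization, stated there in terms of the component $X_1$ of $X-\gamma$ that is not a three-holed sphere.
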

\begin{proof} 
It suffices to show that
the family ${\cal H}=\{H(\gamma)\mid \gamma\}$ 
satisfies the assumptions in 
Theorem \ref{hypextension}.

For an admissible simple closed curve $\gamma\subset X$,
$\delta$-hyperbolicity of ${\cal H}(\gamma)$ 
for a number $\delta >0$ not depending on $\gamma$ 
follows from 
Corollary \ref{hise} and Corollary \ref{elect}.

To show the bounded penetration property, recall that
enlargements of geodesics in ${\cal S\cal D\cal G}(X)$ are
uniform quasi-geodesics in ${\cal E\cal D\cal G}(X)$. 
Let $\gamma$ be an admissible simple closed curve and let $X_1$
be the component of $X-\gamma$ which is not a three-holed sphere. By 
Lemma \ref{projectionlarge},
Lemma \ref{boundedinelec}  and the proof of Corollary \ref{elect},
an enlargement of a geodesic in ${\cal S\cal D\cal G}(X)$ 
passes through two points 
of large distance in ${\cal H}(\gamma)={\cal E\cal D\cal G}(X-\gamma)$ if and
only if one of the following two possibilities holds true.
\begin{enumerate}
\item The diameter of the 
subsurface projection of the endpoints into the 
arc and curve graph of $X_1$
is large.
\item There is an $I$-bundle generator $\beta\subset X_1$ 
such that the diameter of the subsurface projection of the endpoints
into the arc and curve graph of $X_1-\beta$ is large.
\end{enumerate} 
From this the bounded penetration property follows as in the
proof of Lemma \ref{boundedinelec}.
\end{proof}

\bigskip

{\it Proof of Theorem \ref{dischyp}:} 
For $k\geq 1$ define
${\cal E\cal D\cal G}(k)$ to be the graph whose
vertex set is the set of all discs in $H$ and where
two such discs are connected by an edge of length one if and
only if either they are disjoint or they are both disjoint from 
a multicurve in $\partial H$ with a least $k$ components.
Note that if $k$ equals the cardinality of 
a pants decomposition for $\partial H$ then ${\cal E\cal D\cal G}(k)$
equals the disc graph of $H$.

We show by induction on $k$ 
the following.
The graph ${\cal E\cal D\cal G}(k)$
is hyperbolic, and there is a collection ${\cal H}$ of 
complete hyperbolic subgraphs of ${\cal E\cal D\cal G}(k)$ which
satisfies the hypothesis in Theorem \ref{hypextension}
and such that the ${\cal H}$-electrification of 
${\cal E\cal D\cal G}(k)$ is naturally quasi-isometric to 
${\cal E\cal D\cal G}(k-1)$. In particular,  
enlargements of quasi-geodesics
in ${\cal E\cal D\cal G}(k-1)$ are uniform quasi-geodesics in 
${\cal E\cal D\cal G}(k)$.

The case $k=1$ is just Corollary \ref{elect}, and
the case $k=2$ was shown in Corollary \ref{secondlevel2}.
Thus assume that the claim holds true for $k-1\in [2,3g-3)$.
Let $D,E$ be any two discs in $H$. 
Let $\rho$ be a geodesic in 
${\cal E\cal D\cal G}(k-1)$ connecting $D$ to $E$.

Let $i\geq 0$ be such that the discs 
$\rho(i),\rho(i+1)$ are not disjoint. Then they
are  disjoint from a multicurve $\alpha$ in $\partial H$ 
with at least $k-1$
components. We may assume that none of the components of 
$\alpha$ is discbounding.
Since $\partial \rho(i),\partial \rho(i+1)$ intersect
they are contained in the same component 
$X$ of $\partial H-\alpha$. Then $X$ is a visible
subsurface of $\partial H$.

If either $\partial H-X$ contains a multicurve
with $k$ components or if $X-(\partial \rho(i)\cup \partial \rho(i+1))$ 
contains an essential simple closed curve 
then $\rho(i)$ is connected to
$\rho(i+1)$ by an edge
in ${\cal E\cal D\cal G}(k)$. 
Otherwise replace the edge 
$\rho[i,i+1]$ in ${\cal E\cal D\cal G}(k-1)$ 
by a geodesic $\rho_k^i$ 
in ${\cal E\cal D\cal G}(X)$ 
with the same endpoints. The concatenation of these arcs is a
curve $\rho_k$ which 
is an enlargement of $\gamma$. For all $j$,
the discs $\gamma_k(j),\gamma_k(j+1)$ either are disjoint
or disjoint from a multicurve containing at least $k$
components which are not discbounding.

This process stops in the moment the component $X$ of 
$\partial H-\alpha$ is a five-holed sphere or a two-holed torus
since by Lemma \ref{fiveholedsphere},  for 
such a surface the vertex inclusion 
${\cal D\cal G}(X)\to {\cal E\cal D\cal G}(X)$ is a quasi-isometry.
\qed

\bigskip

For a thick subsurface $Y$ of $\partial H$ 
denote as before 
by $\pi_Y$ the subsurface projection of simple closed curves 
into the arc and curve graph of $Y$. 
If $\gamma$ is an $I$-bundle generator
in a thick subsurface $Y$ then let $\pi^\gamma$ be the subsurface projection
into $Y-\gamma$.
 
The following corollary is now immediate from our construction.
It was earlier obtained by Masur and Schleimer 
(Theorem 19.9 of \cite{MS13}).

\begin{corollary}\label{distancesecond2}
There is a number $C>0$ such that 
\[d_{\cal D}(D,E)\asymp \sum_Y 
{\rm diam}(\pi_{Y}(E\cup D))_C+\sum_\gamma{\rm diam}(\pi^\gamma(E\cup D))_C\]
where $Y$ passes through all thick subsurfaces of $\partial H$, where
$\gamma$ passes
through all $I$-bundle generators in thick subsurfaces of $\partial H$,
and the diameter is taken in the arc and curve graph.
\end{corollary}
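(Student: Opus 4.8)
The plan is to run the inductive tower of electrifications from the proof of Theorem \ref{dischyp} and to read off the length of the resulting uniform quasi-geodesic in ${\cal D\cal G}$ by means of a distance formula at the bottom of the tower. First note that Corollary \ref{distancesecond} holds verbatim with $\partial H$ replaced by an arbitrary thick subsurface $X$ of $\partial H$, with the same proof: it only uses Proposition \ref{distanceinter2}, Corollary \ref{elect} and Lemma \ref{boundedinelec}, all of which are stated for a general thick $X$. Thus there is a number $C>0$, which can be chosen uniformly in $X$, so that
\[
d_{{\cal E\cal D\cal G}(X)}(D,E)\asymp {\rm diam}\bigl(\pi_X(\partial D\cup\partial E)\bigr)_C+\sum_{\gamma\subset X}{\rm diam}\bigl(\pi^\gamma(\partial D\cup\partial E)\bigr)_C,
\]
where $\gamma$ runs over the $I$-bundle generators in $X$, where $\pi_X$ is the identity if $X=\partial H$, and where for a proper thick $X$ the projection $\pi_X$ is the subsurface projection into the arc and curve graph of $X$, which is quasi-isometric to ${\cal C\cal G}(X)$.

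Now fix discs $D,E$, and recall that, writing $3g-3$ for the size of a pants decomposition of $\partial H$, one has ${\cal D\cal G}={\cal E\cal D\cal G}(3g-3)$, that each graph ${\cal E\cal D\cal G}(k-1)$ is quasi-isometric to an ${\cal H}$-electrification of ${\cal E\cal D\cal G}(k)$ whose peripheral subgraphs are of the form ${\cal E\cal D\cal G}(X)$ with $X$ a thick subsurface cut off by a multicurve, and that enlargements of quasi-geodesics in ${\cal E\cal D\cal G}(k-1)$ are uniform quasi-geodesics in ${\cal E\cal D\cal G}(k)$. Starting from a geodesic $\rho_1$ in ${\cal E\cal D\cal G}(1)={\cal E\cal D\cal G}(\partial H)$ from $D$ to $E$ and enlarging successively through ${\cal E\cal D\cal G}(2),\dots ,{\cal E\cal D\cal G}(3g-3)={\cal D\cal G}$, one obtains a uniform quasi-geodesic in ${\cal D\cal G}$, so $d_{\cal D}(D,E)$ is uniformly comparable to its length. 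De-telescoping the enlargements shows that this length is, up to a uniform multiplicative constant, the length of $\rho_1$ plus the sum, over all insertions performed at all levels of the tower, of the lengths $d_{{\cal E\cal D\cal G}(X)}(\cdot,\cdot)$ of the inserted geodesics, their arguments being the two discs bounding the corresponding sub-paths. Feeding the distance formula of the first paragraph into the length of $\rho_1$ and into each inserted geodesic expresses $d_{\cal D}(D,E)$, up to uniform constants, as a sum of quantities ${\rm diam}(\pi_X(\cdot))_C$ with $X$ a thick subsurface met along the tower path, and ${\rm diam}(\pi^\gamma(\cdot))_C$ with $\gamma$ an $I$-bundle generator in such a subsurface; insertions into visible but non-thick subsurfaces contribute a bounded amount per insertion and are absorbed into the additive error.

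What remains, and what I expect to be the real obstacle, is the bookkeeping: one must replace every argument by the single pair $\partial D\cup\partial E$, and pass from the sum over the thick subsurfaces and $I$-bundle generators actually met to the sum over all of them. The tools are the bounded penetration property and the bounded geodesic image estimate of Lemma \ref{projectionlarge}. Projecting the tower path to the curve graph of $\partial H$ (as in the proof of Lemma \ref{boundedinelec}) yields a uniform quasi-geodesic, so for each proper thick subsurface $Y$ the projection $\pi_Y$ is coarsely constant along the path outside a single sub-path that enters and leaves $Y$, on which the projections of the endpoints coarsely agree with $\pi_Y(\partial D)$ and $\pi_Y(\partial E)$. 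Hence $Y$ is met essentially once and contributes ${\rm diam}(\pi_Y(\partial D\cup\partial E))$ when this quantity is at least the threshold $C$ and a bounded amount otherwise; the same holds trivially for $\pi_{\partial H}$, and for every $I$-bundle generator $\gamma$ the corresponding statement follows in the same way from Lemma \ref{boundedinelec} and Corollary \ref{elect}. There is therefore neither over- nor under-counting, the two sums collapse to $\sum_Y{\rm diam}(\pi_Y(\partial D\cup\partial E))_C$ and $\sum_\gamma{\rm diam}(\pi^\gamma(\partial D\cup\partial E))_C$, and a final verification that the thresholds and multiplicative constants — which a priori degrade at each of the finitely many levels of the tower — can be chosen uniformly completes the proof.
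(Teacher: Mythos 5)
Your proposal is correct and follows exactly the route the paper intends: the paper gives no written proof, declaring the corollary ``immediate from our construction,'' and your argument is precisely that construction made explicit --- the inductive tower of electrifications from Theorem \ref{dischyp}, the distance formula of Corollary \ref{distancesecond} applied at each level to the peripheral subgraphs ${\cal E\cal D\cal G}(X)$, and the bookkeeping via Lemma \ref{projectionlarge} and bounded penetration to pass from subsurfaces actually met to the sum over all of them. Your honest flagging of the bookkeeping as the real content is apt, but the tools you name are the right ones and the argument goes through.
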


For a thick subsurface $Y$ of $\partial H$ let 
$\partial {\cal E\cal D\cal G}(Y)$ 
be the Gromov boundary of ${\cal E\cal D\cal G}(Y)$.
Define \[\partial {\cal D\cal G}=
\cup_Y\partial{\cal E\cal D\cal G}(Y)\subset {\cal L}\] where
the union is over all thick subsurfaces of $\partial H$
and where this union is viewed as a subspace
of ${\cal L}$, i.e. it is equipped with the coarse
Hausdorff topology. The proof of the following statement
is completely analogous to the proof of Proposition \ref{edgb}
and will be omitted.

\begin{corollary}\label{gromovdisc}
$\partial {\cal D\cal G}$ is the Gromov boundary of 
${\cal D\cal G}$.
\end{corollary}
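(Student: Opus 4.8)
The plan is to give a close adaptation of the proof of Proposition \ref{edgb}, carried out along the tower of graphs ${\cal E\cal D\cal G}(k)$ constructed in the proof of Theorem \ref{dischyp}. First I would assemble the combinatorial input produced there: for each $k\ge 1$ the graph ${\cal E\cal D\cal G}(k)$ is hyperbolic relative to a family ${\cal H}$ of complete subgraphs, each of which is isometric (by Lemma \ref{hise} and its analogues at higher levels) to an electrified disc graph ${\cal E\cal D\cal G}(X)$ of a visible subsurface $X\subset\partial H$; the ${\cal H}$-electrification of ${\cal E\cal D\cal G}(k)$ is quasi-isometric to ${\cal E\cal D\cal G}(k-1)$; and the induction starts with Corollary \ref{elect}. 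Visible subsurfaces that are not thick, and the exceptional surfaces handled by Lemma \ref{fiveholedsphere}, have electrified disc graphs of diameter one and so contribute nothing to any Gromov boundary, so the only relevant subgraphs are the ${\cal E\cal D\cal G}(Y)$ with $Y$ a thick subsurface of $\partial H$.

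The second step is to apply Proposition \ref{boundary} inductively. At level $k$ it presents the Gromov boundary of ${\cal E\cal D\cal G}(k)$, as a set, as the Gromov boundary of ${\cal E\cal D\cal G}(k-1)$ together with the disjoint union of the Gromov boundaries of the subgraphs in ${\cal H}$, each of which is a $\partial{\cal E\cal D\cal G}(Y)$. Peeling the levels off one at a time down to the base case, and invoking Proposition \ref{edgb} for $Y=\partial H$ together with the same argument run inside each proper thick $Y$ to describe $\partial{\cal E\cal D\cal G}(Y)$, I would obtain a canonical bijection between the Gromov boundary of ${\cal D\cal G}={\cal E\cal D\cal G}(3g-3)$ and $\cup_Y\partial{\cal E\cal D\cal G}(Y)$, realized as a set of geodesic laminations on $\partial H$ which arise as coarse Hausdorff limits of disc boundaries.

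The third step is to upgrade this bijection to a homeomorphism onto $\partial{\cal D\cal G}$ equipped with the coarse Hausdorff topology. As in the first half of the proof of Proposition \ref{edgb}, I would first check that $\partial{\cal D\cal G}$ is Hausdorff as a subspace of ${\cal L}$: a point of $\partial{\cal E\cal D\cal G}(Y)$ either fills $Y$ or splits into two minimal components filling $Y-\gamma$ for an $I$-bundle generator $\gamma\subset Y$; any two thick subsurfaces of $\partial H$ essentially intersect, since otherwise a disc with boundary in one would be disjoint from the other; and combining this with minimality of the components one deduces that two distinct boundary points have geodesic representatives crossing transversely, hence disjoint coarse Hausdorff neighborhoods. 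With Hausdorffness in hand, I would match the neighborhood bases supplied by Proposition \ref{boundary} at each level with coarse Hausdorff neighborhoods exactly as in Proposition \ref{edgb}, using the identification of the Gromov boundary of the curve graph of a subsurface with its space of minimal filling laminations in the coarse Hausdorff topology \cite{K99,H06} together with the bounded penetration estimates of Lemma \ref{boundedinelec} and Lemma \ref{projectionlarge}.

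I expect this last step to be the main obstacle. The delicate points are that ${\cal L}$ with the coarse Hausdorff topology is not globally Hausdorff, so the intersection argument above is genuinely needed to see that $\partial{\cal D\cal G}$ behaves like an honest subspace; that a sequence of disc boundaries converging combinatorially in ${\cal D\cal G}$ to a boundary point must converge to the corresponding lamination in the coarse Hausdorff topology and conversely, which is where Corollary \ref{distancesecond2} and the subsurface projection machinery enter; and the bookkeeping needed to keep the neighborhood bases coherent across the roughly $3g-3$ inductive levels. None of this introduces an idea not already present in the proofs of Proposition \ref{edgb} and Theorem \ref{dischyp}, which is why the proof can safely be omitted.
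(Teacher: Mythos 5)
Your proposal follows essentially the same route as the paper: the paper omits the proof of this corollary, stating only that it is ``completely analogous to the proof of Proposition \ref{edgb},'' and what you describe --- running Proposition \ref{boundary} inductively down the tower of graphs ${\cal E\cal D\cal G}(k)$ from the proof of Theorem \ref{dischyp}, discarding the visible-but-not-thick and exceptional subsurfaces as having bounded-diameter disc graphs, and then repeating the Hausdorffness and neighborhood-basis arguments of Proposition \ref{edgb} --- is exactly that analogous argument spelled out. Your identification of the delicate points (non-Hausdorffness of ${\cal L}$, matching combinatorial and coarse Hausdorff convergence via the subsurface projection estimates) matches where the work in Proposition \ref{edgb} actually lies.
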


\bigskip\bigskip

\noindent
MATH. INSTITUT DER UNIVERSIT\"AT BONN\\
ENDENICHER ALLEE 60\\
53115 BONN\\
GERMANY\\

\bigskip\noindent
e-mail: ursula@math.uni-bonn.de

\end{document}